\newcommand{\codim}{\mathop{\mathrm{codim}}\nolimits}
\newcommand{\dist}{\mathop{\mathrm{dist}}\nolimits}
\newcommand{\cF}{\mathcal{F}}
\newcommand{\Normal}{\mathcal{N}}
\newcommand{\Gr}{\mathrm{Gr}}
\DeclareMathOperator{\relint}{relint}
\DeclareMathOperator{\interior}{int}
\newcommand{\E}{\mathbb E}
\newcommand{\R}{\mathbb{R}}
\newcommand{\N}{\mathbb{N}}
\renewcommand{\P}{\mathbb{P}}
\newcommand{\Span}{\mathop{\mathrm{lin}}\nolimits}
\newcommand{\Cov}{\mathop{\mathrm{Cov}}\nolimits}
\newcommand{\sgn}{\mathop{\mathrm{sgn}}\nolimits}
\newcommand{\conv}{\mathop{\mathrm{conv}}\nolimits}
\newcommand{\pos}{\mathop{\mathrm{pos}}\nolimits}
\newcommand{\aff}{\mathop{\mathrm{aff}}\nolimits}
\newcommand{\lin}{\mathop{\mathrm{lin}}\nolimits}
\newcommand{\eps}{\varepsilon}
\newcommand{\ton}{\overset{}{\underset{n\to\infty}\longrightarrow}}
\newcommand{\dd}{{\rm d}}
\newcommand{\eee}{{\rm e}}
\newcommand{\indi}[1]{1_{ \left\{ #1 \right\} }}
\theoremstyle{plain}
\newtheorem{theorem}{Theorem}[section]
\newtheorem{lemma}[theorem]{Lemma}
\newtheorem{corollary}[theorem]{Corollary}
\newtheorem{proposition}[theorem]{Proposition}
\theoremstyle{definition}
\theoremstyle{remark}
\newcommand{\Cs}{C^{\tetrahedron}}
\newcommand{\Ccr}{C^{\octahedron}}
\newcommand{\Ccu}{C^{\mbox{\mancube}}}
\newcommand{\gs}{g^{\tetrahedron}}
\newcommand{\gcr}{g^{\octahedron}}
\newcommand{\gcu}{g^{\mbox{\mancube}}}
\newcommand{\Lineal}{\operatorname{Lineal}}
\newcommand{\Pgau}{P^{\tetrahedron}}
\newcommand{\fgau}{f^{\tetrahedron}}
\newcommand{\Psym}{P^{\octahedron}}
\newcommand{\fsym}{f^{\octahedron}}
\newcommand{\psym}{p^{\octahedron}}
\newcommand{\bcr}{b^{\octahedron}}
\newcommand{\bsym}{\bcr}
\newcommand{\Pzono}{P^{\mbox{\mancube}}}
\newcommand{\fzono}{f^{\mbox{\mancube}}}
\newcommand{\pzono}{p^{\mbox{\mancube}}}
\newcommand{\bcu}{b^{\mbox{\mancube}}}
\newcommand{\bzono}{\bcu}
\newcommand{\phicu}{\phi^{\mbox{\mancube}}}
\newcommand{\phis}{\phi^{\tetrahedron}}
\newcommand{\phicr}{\phi^{\octahedron}}
\newcommand{\phip}{\phi^P}
\newcommand{\gKZ}{g}
\def\factor{2/3*0.1}
\newcommand{\octahedron}{
	\begin{tikzpicture}[thick,scale=5]
	\coordinate (A1) at (0,0);
	\coordinate (A2) at (0.6*\factor,0.2*\factor);
	\coordinate (A3) at (1*\factor,0);
	\coordinate (A4) at (0.4*\factor,-0.2*\factor);
	\coordinate (B1) at (0.5*\factor,0.5*\factor);
	\coordinate (B2) at (0.5*\factor,-0.5*\factor);
	
	
	\draw[solid][line width=0.35pt] (A1) -- (A4) -- (B1);
	\draw[solid][line width=0.35pt] (A1) -- (A4) -- (B2);
	\draw[solid][line width=0.35pt] (A3) -- (A4) -- (B1);
	\draw[solid][line width=0.35pt] (A3) -- (A4) -- (B2);
	\draw[solid][line width=0.35pt] (B1) -- (A1) -- (B2) -- (A3) --cycle;
	\end{tikzpicture}
}
\def\factortetra{0.1}
\newcommand{\tetrahedron}{
\begin{tikzpicture}[thick,scale=5]

\coordinate (C1) at (0.2*\factortetra,0);
\coordinate (C2) at (0.8*\factortetra,-0.1*\factortetra);

\coordinate (A5) at (0.4*\factortetra,-0.2*\factortetra);
\coordinate (B3) at (0.5*\factortetra,0.4*\factortetra);


\draw[solid][line width=0.5pt] (C1) -- (A5) -- (B3);
\draw[solid][line width=0.5pt] (C2) -- (A5); 
\draw[solid][line width=0.5pt] (C1) -- (B3) -- (C2);

\end{tikzpicture}
}
\begin{document}

\author{Zakhar Kabluchko}
\address{Zakhar Kabluchko: Institut f\"ur Mathematische Stochastik,
Westf\"alische {Wilhelms-Uni\-ver\-sit\"at} M\"unster,
Orl\'eans--Ring 10,
48149 M\"unster, Germany}
\email{zakhar.kabluchko@uni-muenster.de}

\author{Hauke Seidel}
\address{Hauke Seidel: Institut f\"ur Mathematische Stochastik,
Westf\"alische Wilhelms-Universit\"at M\"unster,
Orl\'eans--Ring 10,
48149 M\"unster, Germany}
\email{hauke.seidel@uni-muenster.de}

\title[Convex cones spanned by regular polytopes]{Convex cones spanned by regular polytopes}

\keywords{Convex cone, regular polytope, solid angle, conic intrinsic volume, Gaussian polytope, absorption probability, sections of regular polytopes, projections of regular polytopes}


\subjclass[2010]{Primary: 60D05; secondary: 52A22,	52A55,	52B11}

\begin{abstract}
We study three families of polyhedral cones whose sections are regular simplices, cubes, and crosspolytopes. We compute  solid angles and conic intrinsic volumes of these cones. We show that several quantities appearing in stochastic geometry can be expressed through these conic intrinsic volumes. A list of such quantities includes internal and external solid angles of regular simplices and crosspolytopes, the probability that a (symmetric) Gaussian random polytope or the Gaussian zonotope contains a given point, the expected number of faces of the intersection of a regular polytope with a random linear subspace passing through its centre, and the expected number of faces of the projection of a regular polytope onto a random linear subspace.
\end{abstract}

\maketitle
%
%
\section{Definition of the cones}\label{sec:defnition_cones}
\subsection{Introduction}
In Euclidean geometry, there are three infinite series of regular polytopes: regular simplices, regular crosspolytopes and  cubes. In this paper, we shall be interested in convex cones whose ``sections''  are these regular polytopes.
It turns out that many quantities appearing in stochastic geometry can be related to the solid angles of these cones. These quantities include
\begin{enumerate}
\item Internal and external angles of the regular simplex and the regular crosspolytope.
\item Absorption probabilities for certain Gaussian random polytopes.
\item Expected number of faces of a regular polytope intersected by a random linear subspace.
\item Expected number of faces of a random projection of a regular polytope.
\end{enumerate}
The paper is organized as follows. In the remaining part of the present Section~\ref{sec:defnition_cones} we introduce the cones we are interested in. In Section~\ref{sec:cones_angles} we compute the solid angles and the conic intrinsic volumes of these cones. In Section~\ref{sec:absorption_probab} we relate absorption probabilities of Gaussian random polytopes to the cones we are interested in. In Section~\ref{sec:random_sections} we express the number of faces in an intersection of a regular polytope by a random linear subspace through the conic intrinsic volumes of our cones. Finally, the proofs  are collected in  Section~\ref{sec:proofs}.

\subsection{Notation}
The $n$-dimensional Euclidean space is denoted by $\R^n$ and equipped with the standard scalar product $\langle \cdot, \cdot\rangle$.  The standard orthonormal basis of $\R^n$ is denoted by $\eee_1,\ldots, \eee_{n}$. For a non-empty set $A\subset \R^n$ its \textit{convex hull} and its \textit{positive hull} are defined by
\begin{align*}
\conv A &= \{\lambda_1 a_1+\ldots+ \lambda_k a_k \colon k\in \N, a_1,\ldots,a_k \in A, \lambda_1,\ldots,\lambda_k \geq 0, \lambda_1+\ldots+\lambda_k =1\},\\
\pos A  &= \{\lambda_1 a_1+\ldots+ \lambda_k a_k \colon k\in \N, a_1,\ldots,a_k \in A, \lambda_1,\ldots,\lambda_k \geq 0\}.
\end{align*}
The linear space spanned by $A$ is denoted by $\lin A$. For a subset $F\subset \R^n$, we denote by $\relint (F)$ its relative interior, that is the interior of $F$ with respect to its affine hull $\aff(F)$. A \textit{polytope} is a convex hull of finitely many points in $\R^n$. Similarly, a   \textit{polyhedral cone} (or just a \textit{cone}, for the purposes of the present paper) is a positive hull of finitely many points in $\R^n$. Alternatively, a polytope can be defined as an intersection of finitely many closed halfspaces (provided it is bounded), whereas a cone is an intersection of finitely many closed half-spaces whose bounding hyperplanes pass through the origin. For general references on convex sets, polytopes, and stochastic geometry we refer to the monographs~\cite{schneider_convex_bodies}, \cite{ziegler_polytopes}, \cite{SW08}.

\subsection{Cones associated with regular polytopes}
There are three kinds of cones we are interested in. Their definitions are all motivated in the following way. Let $P\subset \R^n$ be a regular polytope. Identify the space $\R^n$ with a hyperplane in $\R^{n+1}$ spanned by the standard orthonormal basis vectors $\eee_1,\ldots,\eee_n$ and shift the polytope by some distance $\sigma>0$ in direction of the last $(n+1)$\textsuperscript{th} basis vector $\eee_{n+1}$. Our cones are the smallest cones that contain the corresponding polytope and have their apex at the origin. We are interested in the three cases when $P$ is a simplex $P_n=\conv\{\eee_1,\ldots, \eee_n\}$, a crosspolytope $P_n=\conv\{\pm \eee_1,\ldots, \pm \eee_n\}$ or a cube $P_n = [-1,1]^n$.
The corresponding cones can be formally defined in the following way:
\begin{align}
\Cs_n(\sigma^2):&=\pos (\sigma \eee_{n+1}+\eee_j:
j\in\{1,\ldots,n\})\label{eq:def_Cs_n},
\\
\Ccr_n(\sigma^2)
:&=\pos (\sigma \eee_{n+1}+ \eee_j, \sigma \eee_{n+1} - \eee_j : j\in\{1,\ldots,n\}), \label{eq:def_Ccr_n} 
\\
\Ccu_n(\sigma^2)
:&=\pos \Big(\sigma \eee_{n+1}+\sum_{j=1}^n \eps_j \eee_j:
\eps= (\eps_1,\ldots,\eps_n)\in\{-1,1\}^n\Big); \label{eq:def_Ccu_n}
\end{align}
see Figure~\ref{fig:cones}.
\begin{figure}
	\includegraphics[height=0.33\textheight]{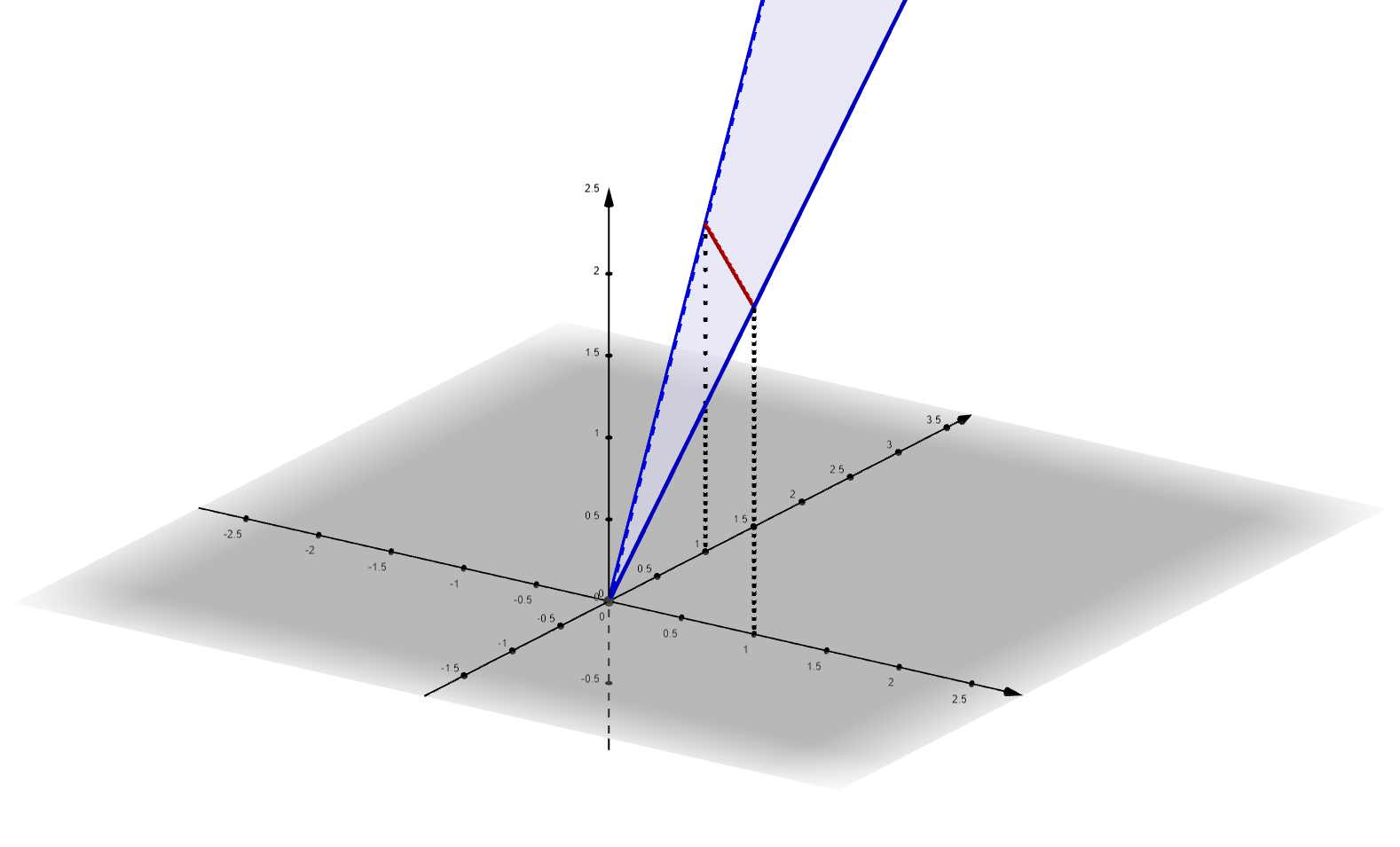}
	\includegraphics[height=0.33\textheight]{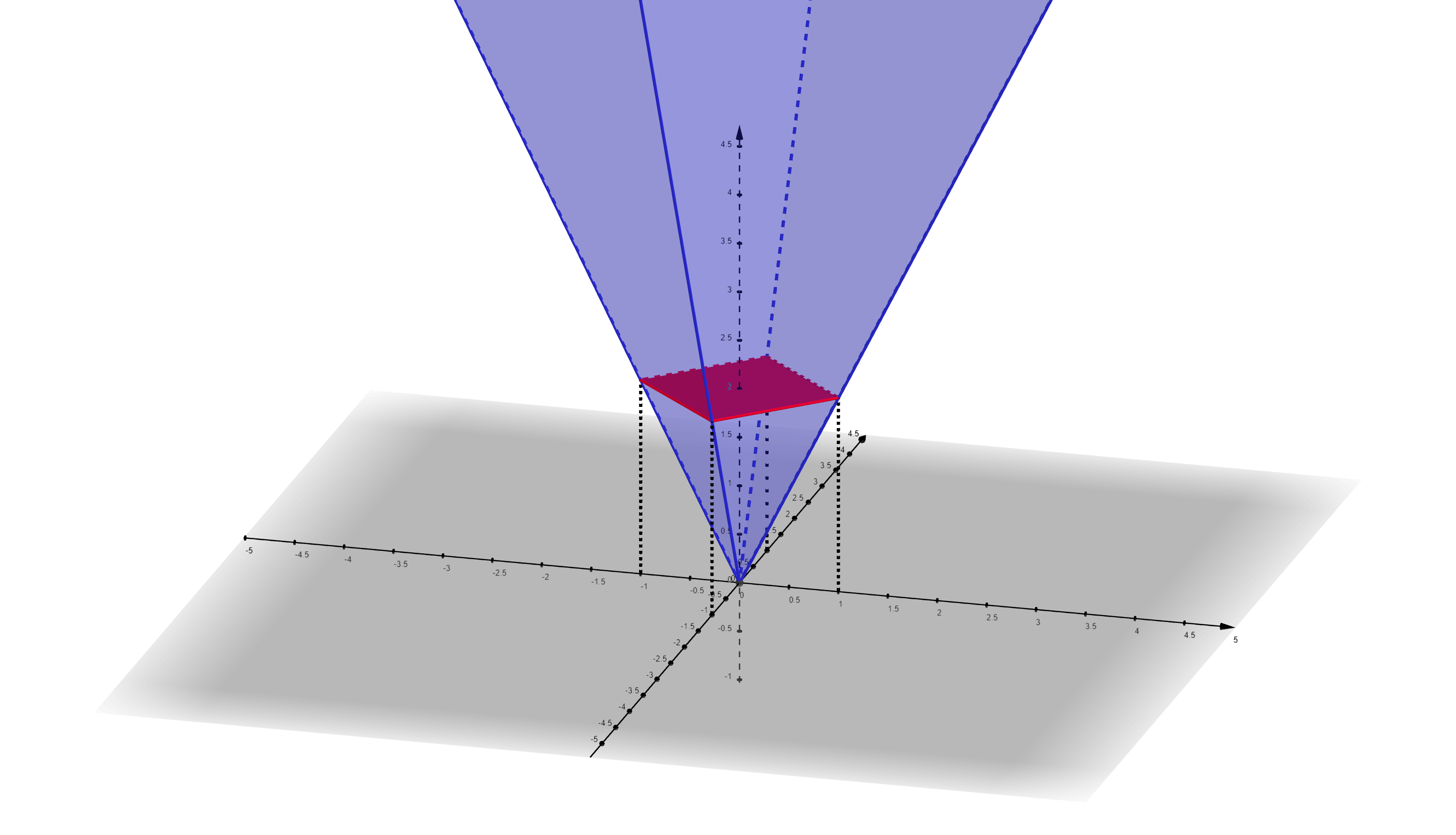}
	\includegraphics[height=0.33\textheight]{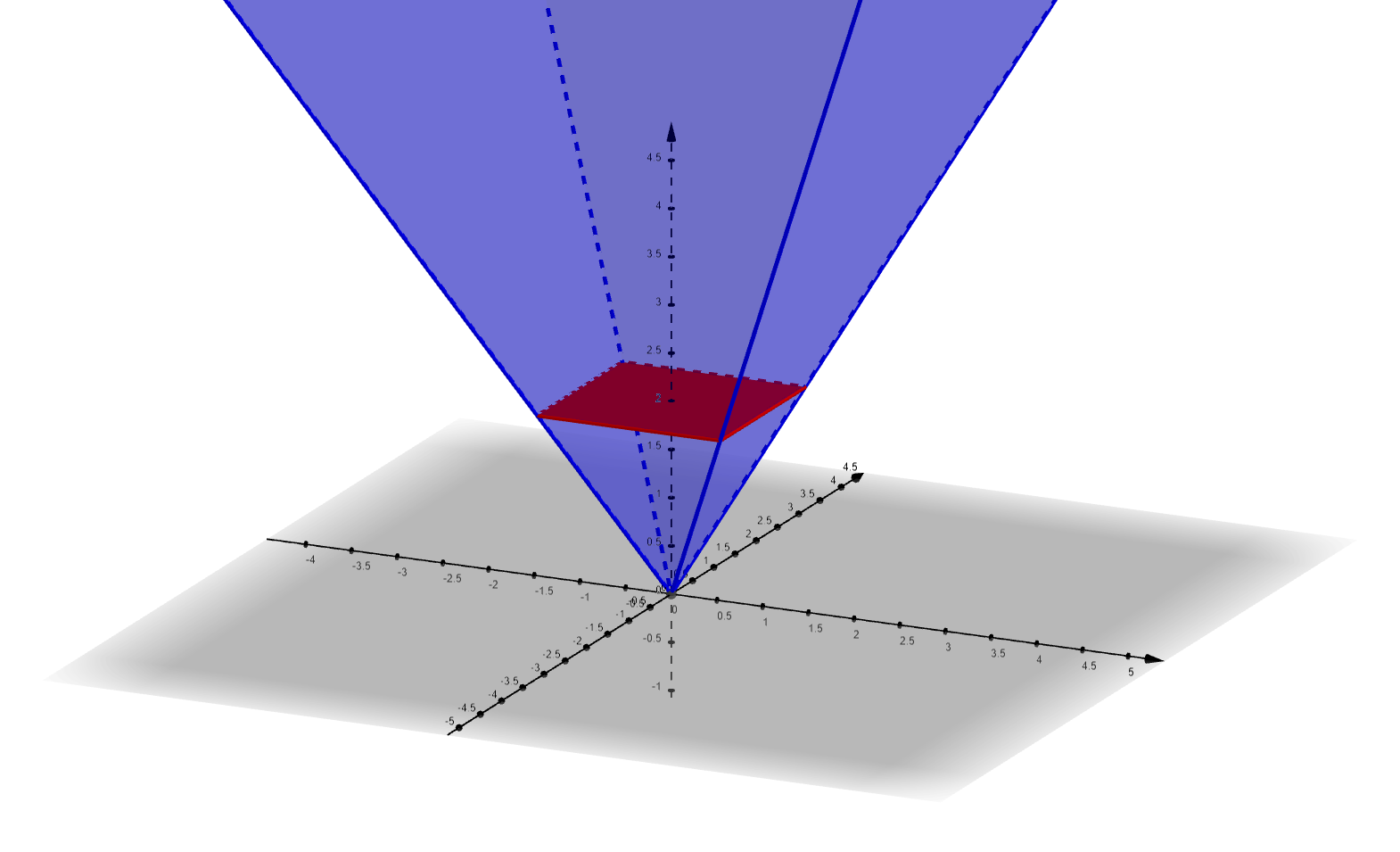}
	\caption{The cones
	$\protect\Cs_2(4)$,
	$\protect\Ccr_2(4)$ and
	$\Ccu_2(4)$
	}
\label{fig:cones}
\end{figure}
The dimension of a cone is defined as the dimension of the linear subspace it generates. Note that the dimensions of our cones are
$$
\dim( \Ccr_n(\sigma^2) )
=
\dim( \Ccu_n(\sigma^2) )
=
n+1, \qquad
\dim( \Cs_n(\sigma^2) )=n.
$$
The following proposition provides convenient representations for the cones  $\Ccr_n(\sigma^2)$ and $\Ccu_n(\sigma^2)$ in terms of the norms that define the underlying polytopes.
\begin{proposition} \label{prop:alt_forms_of_cones}
	For $n\in\N$ and $\sigma >0$ the cones $\Ccr_n(\sigma^2)$ and $\Ccu_n(\sigma^2)$ given in~\eqref{eq:def_Ccr_n} and~\eqref{eq:def_Ccu_n} satisfy
	\begin{align}
	\Ccr_n(\sigma^2) &= \left\{ x=(x_1,\ldots,x_{n+1})\in\R^{n+1} : x_{n+1}\geq \sigma \sum_{i=1}^{n}|x_i| \right\}, \label{eq:Ccr_alt_form} \\
	\Ccu_n(\sigma^2) &= \left\{ x=(x_1,\ldots,x_{n+1})\in\R^{n+1} : x_{n+1}\geq \sigma \max_{1\leq i \leq n }|x_i| \right\}.\label{eq:Ccu_alt_form}
	\end{align}	
\end{proposition}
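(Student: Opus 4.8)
The plan is to prove each of the two set equalities by establishing the two inclusions separately. In both cases the inclusion ``$\subseteq$'' (positive hull contained in the halfspace description) is an immediate consequence of the triangle inequality applied to the coefficients of a generic nonnegative combination of the generators, whereas the inclusion ``$\supseteq$'' requires producing, for a given $x$ on the right-hand side, an explicit representation of $x$ as a nonnegative combination of the generators.

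For the crosspolytope cone, I would write an arbitrary element of $\Ccr_n(\sigma^2)$ as $\sum_{j=1}^n \big(a_j(\sigma\eee_{n+1}+\eee_j)+b_j(\sigma\eee_{n+1}-\eee_j)\big)$ with $a_j,b_j\geq 0$; reading off coordinates gives $x_j=a_j-b_j$ for $j\leq n$ and $x_{n+1}=\sigma\sum_{j=1}^n(a_j+b_j)$. Since $|x_j|=|a_j-b_j|\leq a_j+b_j$, summing over $j$ yields $\sigma\sum_{j=1}^n|x_j|\leq x_{n+1}$, which is the claimed inclusion. For the converse, given $x$ with $x_{n+1}\geq\sigma\sum_{j=1}^n|x_j|$, I would set $a_j=\max(x_j,0)$ and $b_j=\max(-x_j,0)$, so that $a_j-b_j=x_j$ and $a_j+b_j=|x_j|$, and then absorb the slack $x_{n+1}/\sigma-\sum_{j=1}^n|x_j|\geq 0$ by enlarging, say, $a_1$ and $b_1$ by one common amount $t\geq 0$; this leaves $x_1=a_1-b_1$ unchanged while raising the last coordinate by $2\sigma t$, and choosing $t$ appropriately reconstructs $x$.

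For the cube cone, the ``$\subseteq$'' direction again follows at once from $|x_i|=|\sum_\eps\lambda_\eps\eps_i|\leq\sum_\eps\lambda_\eps=x_{n+1}/\sigma$, valid for every $i\in\{1,\ldots,n\}$, so that $x_{n+1}\geq\sigma\max_i|x_i|$. The converse is the only place calling for a genuine (if standard) geometric input: the generators have first $n$ coordinates running over the vertices $\{-1,1\}^n$ of the cube, and one uses that $[-1,1]^n=\conv\{-1,1\}^n$. Concretely, for $x$ with $x_{n+1}\geq\sigma\max_i|x_i|$ I would treat $x=0$ separately, and otherwise rescale, setting $y=(\sigma/x_{n+1})(x_1,\ldots,x_n)$, which satisfies $\max_i|y_i|\leq 1$ and hence admits a representation $y=\sum_\eps\mu_\eps\eps$ with convex weights $\mu_\eps$. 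Taking $\lambda_\eps=(x_{n+1}/\sigma)\mu_\eps\geq 0$ then gives $\sum_\eps\lambda_\eps(\sigma\eee_{n+1}+\sum_j\eps_j\eee_j)=x$, as a coordinatewise check confirms.

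The main obstacle, such as it is, lies in the reverse inclusion for the cube, where the $2^n$ generators must be handled simultaneously and one invokes $[-1,1]^n=\conv\{-1,1\}^n$; for the crosspolytope the analogous fact that the cross-section is the $\ell^1$ ball is built directly into the explicit coefficient choice, so no separate lemma is needed. Both halfspace descriptions could alternatively be derived via conic duality by computing the dual cones of the two generator sets, but the direct double-inclusion argument above is shorter and self-contained.
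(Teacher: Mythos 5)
Your proposal is correct and takes essentially the same route as the paper's proof: for the crosspolytope cone your choice $a_j=\max(x_j,0)$, $b_j=\max(-x_j,0)$ with the slack $x_{n+1}-\sigma\sum_{i=1}^n|x_i|\geq 0$ absorbed through $v_1^++v_1^-=2\sigma \eee_{n+1}$ is exactly the paper's decomposition, and for the cube cone the paper likewise treats $x_{n+1}=0$ separately, rescales by $\sigma/x_{n+1}$, and invokes $[-1,1]^n=\conv\{-1,1\}^n$. The only cosmetic difference is that you verify the easy inclusions by explicit coordinate computations where the paper simply observes that every generator lies in the halfspace.
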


It is possible (and will be necessary in later applications) to define the cone $\Cs_n(r)$ in the range slightly larger than $r= \sigma^2 > 0$.  For $n\in \N$ and $r > -\frac{1}{n}$ let $u_1,\ldots, u_n$ be vectors in an $N$-dimensional Euclidean space $\R^N$  such that for every $i,j\in \{1,\ldots,n\}$,
\begin{align}
\langle u_i, u_j \rangle = r + \delta_{i,j}, \label{eq:Cs_scalar_products}
\end{align}
where $\delta_{i,j}$ denotes the Kronecker delta.  Note that such vectors exist because the $n\times n$-matrix with entries $r + \delta_{i,j}$ is positive definite for $r > -\frac{1}{n}$, as can be seen from the inequality
$$
\sum_{i=1}^n \sum_{j=1}^n (r + \delta_{i,j}) x_i x_j  = r (x_1+\ldots+x_n)^2  +  (x_1^2 + \ldots + x_n^2) > 0
$$
which is valid for all real $x_1,\ldots,x_n$ and follows from the inequality between the arithmetic and quadratic means.
We define $\Cs_n(r)$ to be the positive hull of such $u_1,\ldots,u_n$. For different choices of $u_1,\ldots, u_n$ we obtain different cones, but all these cones are isometric. Thus, $\Cs_n(r)$ is well defined up to isometry for all $r> -\frac{1}{n}$. The cones $\Cs_n(r)$ have been first introduced and studied by Vershik and Sporyshev~\cite{vershik_sporyshev} under the name contracted ($r>0$) and extended ($r<0$) orthants. For a review of their properties, we refer to~\cite{kabluchko2017absorption}, where these cones are denoted by $C_n(r)$.

In the present paper, the main focus lies on the cones $\Ccr_n(\sigma^2)$ and $\Ccu_n(\sigma^2)$. These cones also can  be characterized by the scalar products of the spanning vectors.
Denoting the vectors spanning $\Ccr_n(\sigma^2)$ by $v_i^+:=\sigma \eee_{n+1}+ \eee_i$ and  $v_i^-:=\sigma \eee_{n+1}- \eee_i$, where $i\in\{1,\ldots,n\}$, we have
\begin{align}
\langle v_i^+, v_j^+ \rangle = \langle v_i^-, v_j^- \rangle = \sigma^2 + \delta_{i,j}, \qquad
\langle v_i^+, v_j^- \rangle = \sigma^2 - \delta_{i,j} \label{eq:Ccr_scalar_products}
\end{align}
for $i,j\in \{1,\ldots,n\}$.
Analogously, denoting the vectors spanning $\Ccu_n(\sigma^2)$ by $v_\eps=\sigma \eee_{n+1}+\sum_{i=1}^{n}\eps_i \eee_i$, where $\eps = (\eps_1,\ldots,\eps_n)\in \{-1,1\}^n$, we have
\begin{align}
\langle v_{\eps}, v_\eta \rangle = \sigma^2+ \langle \eps , \eta \rangle \label{eq:Ccu_scalar_products}
\end{align}
for $\eps, \eta \in \{-1, 1\}^n$. Equations~\eqref{eq:Ccr_scalar_products} and~\eqref{eq:Ccu_scalar_products} explain why we use $\sigma^2$ rather than $\sigma$ as the argument in  $\Ccr_n(\sigma^2)$ and $\Ccu_n(\sigma^2)$.

Knowing these scalar products will be helpful when proving that a certain cone $C$ is isometric to one of the cones above. If $C$ is defined as a positive hull of a finite set $A$ of vectors, to prove the isometry with one of the above cones, it suffices to show that the set $A$ satisfies \eqref{eq:Cs_scalar_products}, \eqref{eq:Ccr_scalar_products} or \eqref{eq:Ccu_scalar_products}.


\section{Angles and intrinsic volumes} \label{sec:cones_angles}
The aim of the present section is to state results on solid angles and conic intrinsic volumes of the cones $\Ccr_n(\sigma^2)$ and $\Ccu_n(\sigma^2)$. The proofs will be given in Section~\ref{sec:proofs_cones_and_angles}.

\subsection{Solid angles}
The solid angle of a cone $C$ is defined as follows. Let $N$ be a random vector with some rotationally invariant distribution on the linear subspace generated by $C$. Then, the \textit{solid angle} of $C$ is defined as
$$
\alpha(C) := \P(N\in C).
$$
As an example of a rotationally invariant distribution, we can take the multivariate standard normal distribution. Since the cones $\Ccr_n(\sigma^2)$ and $\Ccu_n(\sigma^2)$ have the full dimension $n+1$, we can take $N= (\xi_1,\ldots,\xi_{n+1})$, where $\xi_1,\ldots,\xi_{n+1}$ are independent standard normal random variables, and the representation given in Proposition~\ref{prop:alt_forms_of_cones} immediately yields the following
\begin{corollary} \label{cor:angles_cones}
For $n\in \N$ and $\sigma^2>0$,	the solid angles of the cones $\Ccr_n(\sigma^2)$ and $\Ccu_n(\sigma^2)$ are given by
	\begin{align*}
	\alpha\left(\Ccr_n(\sigma^2)\right)&=\P\left( \frac{1}{\sigma} \xi_{n+1}\ge \sum_{j=1}^n|\xi_j| \right), \\
	\alpha\left(\Ccu_n(\sigma^2)\right)&=\P\left( \frac{1}{\sigma} \xi_{n+1}\ge \max_{1\le j\le n}|\xi_j| \right) ,
	\end{align*}
	where $\xi_1,\ldots,\xi_{n+1}$ are i.i.d.\ standard normal random variables.
\end{corollary}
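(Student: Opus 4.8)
The plan is to apply the definition of the solid angle directly, using the Gaussian representative for the rotationally invariant distribution. Since the cones $\Ccr_n(\sigma^2)$ and $\Ccu_n(\sigma^2)$ both have full dimension $n+1$ in $\R^{n+1}$, the linear subspace they generate is all of $\R^{n+1}$, and the standard Gaussian vector $N=(\xi_1,\ldots,\xi_{n+1})$ with i.i.d.\ standard normal components is a valid choice of rotationally invariant random vector on that subspace. Hence $\alpha(\Ccr_n(\sigma^2))=\P(N\in\Ccr_n(\sigma^2))$ and likewise for the cube cone.

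First I would invoke the alternative descriptions from Proposition~\ref{prop:alt_forms_of_cones}. By \eqref{eq:Ccr_alt_form}, the event $\{N\in\Ccr_n(\sigma^2)\}$ coincides with $\{\xi_{n+1}\geq\sigma\sum_{j=1}^n|\xi_j|\}$, and by \eqref{eq:Ccu_alt_form} the event $\{N\in\Ccu_n(\sigma^2)\}$ coincides with $\{\xi_{n+1}\geq\sigma\max_{1\leq j\leq n}|\xi_j|\}$. Because $\sigma>0$, dividing both sides of each inequality by $\sigma$ is an equivalence, rewriting these events as $\{\tfrac{1}{\sigma}\xi_{n+1}\geq\sum_{j=1}^n|\xi_j|\}$ and $\{\tfrac{1}{\sigma}\xi_{n+1}\geq\max_{1\leq j\leq n}|\xi_j|\}$ respectively. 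Taking probabilities yields the two displayed formulas.

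The only points requiring care are peripheral: that the standard multivariate normal is indeed rotationally invariant (a standard fact, already noted in the definition of the solid angle preceding the statement), and that the full dimensionality of both cones — recorded in the dimension computation just before Proposition~\ref{prop:alt_forms_of_cones} — guarantees that the Gaussian law on all of $\R^{n+1}$ is the correct rotationally invariant distribution to use. Given Proposition~\ref{prop:alt_forms_of_cones}, no genuine obstacle remains; the result is an immediate rewriting of the defining inequalities, which is precisely why it is phrased as a corollary rather than a theorem.
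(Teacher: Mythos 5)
Your proposal is correct and matches the paper's own reasoning exactly: the paper likewise takes $N=(\xi_1,\ldots,\xi_{n+1})$ standard Gaussian, justified by the full dimension $n+1$ of both cones, and derives the formulas immediately from the half-space descriptions in Proposition~\ref{prop:alt_forms_of_cones}. Your extra remark that dividing by $\sigma>0$ preserves the inequalities is a harmless elaboration of the same one-line argument.
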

We denote the solid angles above by
\begin{align*}
\gcr_n(\sigma^2):&=\P\left( \frac{1}{\sigma} \xi_{n+1}\ge \sum_{j=1}^n|\xi_j| \right), \\
\gcu_n(\sigma^2):&=\P\left( \frac{1}{\sigma} \xi_{n+1}\ge \max_{1\le j\le n}|\xi_j| \right),
\end{align*}
where $n\in \N$ and $\sigma^2>0$.
Similarly, we can define
$$\gs_n(r) := \alpha (\Cs_n(r))$$
for $r\geq -\frac{1}{n}$. In \cite[Proposition 1.5]{kabluchko2017absorption}, it was shown that
\begin{align}
\gs_n(r)=\P[\eta_1<0,\ldots,\eta_n<0], \label{eq:Cs_alt_form}
\end{align}
where $(\eta_1,\ldots, \eta_n)$ is a Gaussian vector with  zero mean and covariance matrix
$$
\Cov(\eta_i, \eta_j)= \delta_{i,j}-\frac{r}{1+n r}, \qquad i,j\in \{1,\ldots, n\}.
$$
For a review of the properties of the function $\gs_n(r)$ we refer to~\cite{kabluchko2017absorption}. Note that $\gs_n(r)$ coincides with $g_n(-r/(1+nr))$ in the notation of~\cite{kabluchko2017absorption}.  We extend the above definitions to the case $n=0$ by putting $\gcr_0(\sigma^2):=1/2$,  $\gcu_0(\sigma^2):=1/2$, and $\gs_0 (r):=1$.

\subsection{Polar cones}
For a polyhedral cone $C\subset \R^n$, its \textit{polar cone} is defined by
$$
C^\circ := \{x\in \R^n \colon \langle x,y\rangle \leq 0 \text{ for all } y\in C\}.
$$
It is known that $C^{\circ \circ} = C$.  
The following proposition may be viewed as a cone version of the classical polarity relation between  crosspolytope and cube. 
\begin{proposition} \label{prop:duality_cones}
For every $\sigma^2>0$ and $n\in \N$, we have
$$
\left(\Ccu_n(\sigma^2)\right)^{\circ} = -\Ccr_n\left(\frac{1}{\sigma^2}\right),
\qquad
\left(\Ccr_n(\sigma^2)\right)^{\circ} = -\Ccu_n\left(\frac{1}{\sigma^2}\right).
$$
\end{proposition}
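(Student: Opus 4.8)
The plan is to reduce everything to the two inequality representations of $\Ccr_n$ and $\Ccu_n$ furnished by Proposition~\ref{prop:alt_forms_of_cones}, together with the elementary description of the polar of a positively spanned cone. Recall the standard fact that if $C = \pos A$ for a finite set $A \subset \R^{n+1}$, then $C^\circ = \{x : \langle x, a\rangle \le 0 \text{ for all } a \in A\}$; indeed $\langle x, \cdot\rangle$ is linear, so its sign on all of $C$ is controlled by its values on the generators. I would apply this to $\Ccu_n(\sigma^2)$, which by~\eqref{eq:def_Ccu_n} is the positive hull of the vectors $v_\eps = \sigma \eee_{n+1} + \sum_{i=1}^n \eps_i \eee_i$, $\eps \in \{-1,1\}^n$.

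Writing $x = (x_1,\ldots,x_{n+1})$, the defining condition of the polar becomes $\langle x, v_\eps\rangle = \sigma x_{n+1} + \sum_{i=1}^n \eps_i x_i \le 0$ for every $\eps \in \{-1,1\}^n$. The left-hand side is maximised over $\eps$ by choosing $\eps_i = \sgn(x_i)$, which gives $\max_\eps \langle x, v_\eps\rangle = \sigma x_{n+1} + \sum_{i=1}^n |x_i|$. Hence $x \in (\Ccu_n(\sigma^2))^\circ$ if and only if $-x_{n+1} \ge \frac{1}{\sigma}\sum_{i=1}^n |x_i|$. On the other hand, the representation~\eqref{eq:Ccr_alt_form} applied with parameter $1/\sigma^2$ (so that the coefficient in front of the sum is $1/\sigma$) shows that $y \in \Ccr_n(1/\sigma^2)$ if and only if $y_{n+1}\ge \frac{1}{\sigma}\sum_{i=1}^n|y_i|$; substituting $y = -x$ (so that $|y_i| = |x_i|$ and $y_{n+1} = -x_{n+1}$), this is exactly the condition $-x_{n+1}\ge \frac1\sigma\sum_{i=1}^n|x_i|$. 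This identifies $(\Ccu_n(\sigma^2))^\circ$ with $-\Ccr_n(1/\sigma^2)$ and settles the first identity.

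For the second identity I would avoid repeating the computation and instead invoke the bipolar theorem $C^{\circ\circ}=C$ together with the elementary relation $(-C)^\circ = -C^\circ$, which follows directly from $\langle x, -y\rangle = \langle -x, y\rangle$. Taking polars in the first identity and using these two facts gives $\Ccu_n(\sigma^2) = ((\Ccu_n(\sigma^2))^\circ)^\circ = (-\Ccr_n(1/\sigma^2))^\circ = -(\Ccr_n(1/\sigma^2))^\circ$, hence $(\Ccr_n(1/\sigma^2))^\circ = -\Ccu_n(\sigma^2)$. Replacing $\sigma^2$ by $1/\sigma^2$ yields $(\Ccr_n(\sigma^2))^\circ = -\Ccu_n(1/\sigma^2)$, as required.

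There is no serious obstacle here: the argument is essentially a dictionary between the $\ell^\infty$-type and $\ell^1$-type descriptions of the two cones, mediated by the duality of the max-norm and the sum-norm. The only points demanding care are the bookkeeping of the square in the parameter (the argument $\sigma^2$ corresponds to a linear coefficient $\sigma$, so the polar has coefficient $1/\sigma$, i.e.\ parameter $1/\sigma^2$) and the sign flip producing the ``$-$'' in front of the dual cone; both are handled cleanly by the substitution $y=-x$ and by the identity $(-C)^\circ = -C^\circ$.
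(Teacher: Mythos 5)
Your proof is correct, but it computes the key polar by a route dual to the paper's. The paper identifies the $n$-dimensional faces (facets) of $\Ccu_n(\sigma^2)$, computes the one-dimensional orthogonal complement of the linear hull of each facet, and invokes the fact that the polar of a full-dimensional polyhedral cone is positively spanned by the outer normals of its facets; this yields $(\Ccu_n(\sigma^2))^\circ = \pos\bigl(-\frac{1}{\sigma}\eee_{n+1}\pm\eee_i : i\in\{1,\ldots,n\}\bigr)$ directly in generator form, which is then recognized as $-\Ccr_n(1/\sigma^2)$ from the defining representation~\eqref{eq:def_Ccr_n}. You instead polarize the generator description~\eqref{eq:def_Ccu_n}: using the elementary fact $(\pos A)^\circ = \{x : \langle x,a\rangle \le 0 \text{ for all } a\in A\}$, you maximize $\langle x, v_\eps\rangle$ over $\eps\in\{-1,1\}^n$ to collapse the $2^n$ linear constraints into the single inequality $-x_{n+1}\ge \frac{1}{\sigma}\sum_{i=1}^n|x_i|$, which you match against the inequality representation~\eqref{eq:Ccr_alt_form}. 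Your version is somewhat more self-contained: the only inputs are Proposition~\ref{prop:alt_forms_of_cones} (proved just before) and the trivial generator--polar fact, whereas the paper's argument relies on identifying the facial structure of $\Ccu_n(\sigma^2)$ and on the standard but less immediate facet-normal characterization of the polar, which it uses without detailed justification. What the paper's route buys is an explicit description of the facets and an explicit positive-hull representation of the polar, in the same spirit as the tangent- and normal-cone computations carried out later for the conic intrinsic volumes; your route lands in the same place because Proposition~\ref{prop:alt_forms_of_cones} certifies the equivalence of the two representations. Your treatment of the second identity---the bipolar relation $C^{\circ\circ}=C$ combined with $(-C)^\circ=-C^\circ$ and the substitution $\sigma^2\mapsto 1/\sigma^2$---coincides with the paper's, with the minor merit that you make the sign-flip identity explicit where the paper leaves it tacit.
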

While cones associated with the crosspolytope and the cube are polar to each other by the above proposition, the cones associated with the simplex are self-polar in the following sense: If $C\subset \R^n$ is an isometric copy of $\Cs_n(r)$ (so that $C$ has full dimension in $\R^n$), then $C^\circ$ is isometric to $\Cs_n(-r/(1+nr))$; see Proposition 2.2 in~\cite{kabluchko2017absorption}.

\subsection{Angles of the crosspolytope}
Given  a polytope $P$ and a face $F$, the tangent cone $T_F(P)$ of $P$ at $F$ is defined as the positive hull of the set $P-f_0$, where $f_0$ is some fixed point in the relative interior of $F$. The internal solid angle of $P$ at $F$ is the angle of this tangent cone. The normal (or external) solid angle of $P$ at $F$ is defined as the angle of the polar of the tangent cone.
The next proposition expresses the internal and the external solid angles at the faces of the regular crosspolytope through the quantities $\gcr_n(\sigma^2)$ and $\gcu_n(\sigma^2)$ introduced above.

\begin{proposition} \label{prop:inner_outer_angle}
	For $n\in \N$ and $k\in \{0,\ldots, n-1\}$ let $F$ be a $k$-dimensional face of the $n$-dimensional crosspolytope $P_n$. The internal solid angle of $P_n$ at $F$ equals
	\begin{align}
		\gcr_{n-k-1}\left(\frac{1}{k+1}\right) = \P\left( \sqrt{k+1} \xi_{n-k}\ge \sum_{j=1}^{n-k-1}|\xi_j| \right) . \label{eq:crosspoly_inner_solid_angle}
	\end{align}
	The normal solid angle of $P_n$ at $F$ equals
	\begin{align}
		\gcu_{n-k-1}(k+1)=\P\left( \xi_{n-k}\ge \sqrt{k+1} \max_{1\le j\le n-k-1}|\xi_j| \right) . \label{eq:crosspoly_outer_solid_angle}
	\end{align}
\end{proposition}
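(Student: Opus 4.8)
The plan is to reduce to a single explicit face, compute its tangent cone, strip off the lineality coming from the face itself, and recognize the resulting reduced cone as a copy of $\Ccr_{n-k-1}(1/(k+1))$; the external angle will then follow by duality.

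Since the crosspolytope $P_n$ is invariant under coordinate permutations and sign flips, all its $k$-dimensional faces are congruent, so it suffices to treat $F=\conv\{\eee_1,\ldots,\eee_{k+1}\}$ with barycentre $f_0=\frac{1}{k+1}(\eee_1+\cdots+\eee_{k+1})$. Because $P_n$ is the convex hull of $\pm\eee_1,\ldots,\pm\eee_n$, its tangent cone at $F$ is
$$
T_F(P_n)=\pos\{\pm\eee_i-f_0:\ i=1,\ldots,n\}.
$$
The key structural observation is that $T_F(P_n)$ contains the $k$-dimensional lineality space $L:=\lin\{\eee_i-\eee_j:\ 1\le i,j\le k+1\}$, spanned by the generators $\eee_i-f_0$ with $i\le k+1$. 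I would first record the general fact that if a cone $C$ splits as $C=L\oplus C'$ with $L$ a subspace and $C'\subseteq L^\perp$, then $\alpha(C)=\alpha(C')$: decomposing a rotationally invariant vector $N$ along $L\oplus L^\perp$, membership in $C$ depends only on the $L^\perp$-component. Applying this with $C':=\pi_{L^\perp}(T_F(P_n))$ reduces both angle computations to the cone $C'$ living in the $(n-k)$-dimensional space $L^\perp$.

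Next I would compute the orthogonal projections of the generators onto $L^\perp$. Writing $w=\eee_1+\cdots+\eee_{k+1}$, the generators $\eee_i-f_0$ with $i\le k+1$ project to $0$, the generators $-\eee_i-f_0$ with $i\le k+1$ all project to the single ray $\pos\{-w\}$, and the generators $\pm\eee_i-f_0$ with $i\ge k+2$ project to $-\tfrac{1}{k+1}w\pm\eee_i$. Since $(-\tfrac1{k+1}w+\eee_i)+(-\tfrac1{k+1}w-\eee_i)$ is a positive multiple of $-w$, the ray $\pos\{-w\}$ is redundant whenever $n-k-1\ge 1$, leaving
$$
C'=\pos\{-\tfrac{1}{k+1}w\pm\eee_i:\ i=k+2,\ldots,n\}.
$$
A short computation of the pairwise scalar products of these $2(n-k-1)$ vectors (using $\langle w,w\rangle=k+1$ and $\langle w,\eee_i\rangle=0$ for $i\ge k+2$) shows they satisfy exactly the relations~\eqref{eq:Ccr_scalar_products} with $\sigma^2=\frac1{k+1}$; hence, by the scalar-product criterion recorded at the end of Section~\ref{sec:defnition_cones}, $C'$ is isometric to $\Ccr_{n-k-1}(\frac1{k+1})$. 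This yields the internal angle $\gcr_{n-k-1}(\frac1{k+1})$, and the explicit probability in~\eqref{eq:crosspoly_inner_solid_angle} follows from Corollary~\ref{cor:angles_cones}.

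For the external angle, the normal cone is $(T_F(P_n))^\circ$, and since $T_F(P_n)=L\oplus C'$ with $L$ a subspace one has $(T_F(P_n))^\circ=(C')^\circ$ taken inside $L^\perp$, whose angle is again computed in $L^\perp$. Using the isometry $C'\cong\Ccr_{n-k-1}(\frac1{k+1})$ together with the duality $(\Ccr_m(\sigma^2))^\circ=-\Ccu_m(1/\sigma^2)$ from Proposition~\ref{prop:duality_cones} (with $1/\sigma^2=k+1$), and the symmetry of the Gaussian giving $\alpha(-C)=\alpha(C)$, the external angle equals $\gcu_{n-k-1}(k+1)$, and~\eqref{eq:crosspoly_outer_solid_angle} again follows from Corollary~\ref{cor:angles_cones}. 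The degenerate facet case $k=n-1$, where $C'$ collapses to a single ray in a line, is checked directly and matches the conventions $\gcr_0=\gcu_0=\frac12$. I expect the main obstacle to be the projection and redundancy bookkeeping in identifying $C'$: verifying that the extra ray $\pos\{-w\}$ drops out and that the scalar products match~\eqref{eq:Ccr_scalar_products} on the nose. Once $C'$ is pinned down, both angles are immediate from the earlier results.
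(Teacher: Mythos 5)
Your proposal is correct, and its overall skeleton coincides with the paper's: pass to the tangent cone, quotient out the lineality space, identify the reduced cone as $\Ccr_{n-k-1}\left(\frac{1}{k+1}\right)$ via the scalar-product criterion \eqref{eq:Ccr_scalar_products}, and get the external angle from Proposition~\ref{prop:duality_cones} plus Corollary~\ref{cor:angles_cones}. Where you genuinely diverge is in how the reduced cone is pinned down. The paper works with the halfspace description $P_n=\{u:\sum_j |u_j|\le 1\}$, derives an inequality description of $T_F$, intersects with the orthocomplement of the lineality space to obtain $D_{n,l}$, and then needs Lemma~\ref{Lem:D_n=C_n-k}, whose nontrivial content is a two-inclusion argument constructing explicit nonnegative coefficients that express an arbitrary $v\in D_{n,k}$ through the generators $u_i^{\pm}$. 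You instead start from the vertex description, so $T_F=\pos\{\pm\eee_i-f_0\}$ comes with generators for free, and you obtain the reduced cone by projecting these generators onto $L^\perp$, using that linear maps commute with positive hulls; the price is the redundancy bookkeeping (all $-\eee_i-f_0$ with $i\le k+1$ project onto the single ray $\pos\{-w\}$, which is absorbed by the pairs $-\frac{1}{k+1}w\pm\eee_i$ once $n-k-1\ge 1$), which you carry out correctly, and your Gram matrix computation matches \eqref{eq:Ccr_scalar_products} with $\sigma^2=\frac{1}{k+1}$ exactly as the paper's rescaled vectors $u_i^{\pm}/k$ do. One step you should make explicit: the splitting $T_F=L\oplus\pi_{L^\perp}(T_F)$, equivalently $\pi_{L^\perp}(T_F)=T_F\cap L^\perp$, is valid precisely because $L$ lies in the lineality space of $T_F$ (for $x\in T_F$ one has $x_\perp=x+(-x_L)\in T_F$ since $-x_L\in L\subset T_F$); you use it silently when applying your angle-reduction fact. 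Net comparison: your route trades the paper's explicit positive-hull lemma for routine projection computations, which is arguably more mechanical and scales well, while the paper's route yields in passing a clean inequality description of the tangent cone; both handle the external angle identically by polarity, and your separate check of the facet case $k=n-1$ (where the ray $\pos\{-w\}$ is genuinely needed) correctly matches the conventions $\gcr_0=\gcu_0=\frac{1}{2}$.
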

For the angles of the regular simplex, similar expressions in terms of $\gs_n(r)$ are possible; see, e.g.\ \cite[Proposition~1.2]{kabluchko2017absorption}.

\subsection{Conic intrinsic volumes}
The $k$\textsuperscript{th} \textit{conic intrinsic volume} of an $m$-dimensional polyhedral cone $C\subset \R^m$ is defined by
$$
\upsilon_k (C) := \sum_{F\in \cF_k(C)} \alpha(F) \alpha(N_F(C)),  \qquad k\in \{0,\ldots,m\},
$$
where $N_F(C) = C^\circ \cap (\lin F)^\bot$ is the face of $C^\circ$ corresponding to $F$ via the polar duality.
We refer to~\cite[Section~6.5]{SW08} and~\cite{amelunxen}, \cite{amelunxen_comb} for an extensive account of the properties of conic intrinsic volumes.
\begin{theorem} \label{thm:intrinsic_volumes}
	For $n\in\N$ and $k\in \{1,\ldots, n+1\}$,
	the $k$\textsuperscript{th} conic intrinsic volume of the cone $\Ccu_n(\sigma^2)$ is
	\begin{align}
		\upsilon_k\left( \Ccu_n(\sigma^2) \right)
		=
		2^{n-k+1} \binom{n}{k-1} \gcu_{k-1}\left(\sigma^2+n-k+1\right) \gs_{n-k+1}\left( \frac{1}{\sigma^2} \right). \label{eq:Ccu_intrinsic_volume}
	\end{align}
	For $k\in \{0,\ldots, n\}$, the $k\textsuperscript{th}$ conic intrinsic volume of the cone $\Ccr_n(\sigma^2)$ is
	\begin{align}
		\upsilon_k\left( \Ccr_n(\sigma^2) \right)
		=
		2^{k} \binom{n}{k} \gcu_{n-k}\left(\frac{1}{\sigma^2}+k\right) \gs_{k}\left( \sigma^2 \right) . \label{eq:Ccr_intrinsic_volume}
	\end{align}
	The exceptional cases are
	$$
		\upsilon_0(\Ccu_n(\sigma^2))= \gcr_n\left(\frac{1}{\sigma^2}\right), \qquad \upsilon_{n+1}(\Ccr_n(\sigma^2))= \gcr_n(\sigma^2).
	$$
\end{theorem}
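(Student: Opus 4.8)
The plan is to evaluate the defining sum $\upsilon_k(C)=\sum_{F\in\cF_k(C)}\alpha(F)\,\alpha(N_F(C))$ directly, exploiting symmetry to reduce to a single representative face. Both cones are positive hulls of a regular polytope $P$ (the cube $[-1,1]^n$, resp.\ the crosspolytope $\conv\{\pm\eee_1,\dots,\pm\eee_n\}$) placed at height $\sigma$ in $\{x_{n+1}=\sigma\}$ with apex at $0$; both are pointed. For such a cone the faces are exactly the apex $\{0\}$ together with the cones $\pos(G)$ over the faces $G$ of $P$, so that a $(k-1)$-dimensional face $G$ of $P$ gives a $k$-dimensional face $F=\pos(G)$. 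The hyperoctahedral group $B_n$ acts on $\R^{n+1}$ fixing $\eee_{n+1}$ by isometries preserving the cone (hence also $C^\circ$ and each normal cone) and acts transitively on the $(k-1)$-faces of $P$. Consequently all $k$-dimensional faces $F$ share a common $\alpha(F)$ and a common $\alpha(N_F(C))$, and their number equals the number of $(k-1)$-faces of $P$, namely $2^{\,n-k+1}\binom{n}{k-1}$ for the cube and $2^{k}\binom{n}{k}$ for the crosspolytope. This already produces the combinatorial prefactors, leaving the two angles of one representative face.

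For the face angle I fix a standard representative and apply the scalar-product criterion stated after \eqref{eq:Ccu_scalar_products}. For the cube cone I take the cube face with coordinates $1,\dots,k-1$ free and $k,\dots,n$ fixed to $+1$; its generators $v_\eps=\sigma\eee_{n+1}+\sum_{i<k}\eps_i\eee_i+\sum_{j\ge k}\eee_j$ satisfy $\langle v_\eps,v_\eta\rangle=(\sigma^2+n-k+1)+\langle\eps,\eta\rangle$, so $F$ is isometric to $\Ccu_{k-1}(\sigma^2+n-k+1)$ and $\alpha(F)=\gcu_{k-1}(\sigma^2+n-k+1)$. For the crosspolytope cone I take the face $\pos\{\sigma\eee_{n+1}+\eee_1,\dots,\sigma\eee_{n+1}+\eee_k\}$; the generators have scalar products $\sigma^2+\delta_{ij}$, matching \eqref{eq:Cs_scalar_products}, so $F\cong\Cs_k(\sigma^2)$ and $\alpha(F)=\gs_k(\sigma^2)$.

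The normal angles are where the real work lies; here I use $N_F(C)=C^\circ\cap(\lin F)^\perp$ together with the explicit polars of Proposition~\ref{prop:duality_cones} and the halfspace descriptions of Proposition~\ref{prop:alt_forms_of_cones}. For the cube cone, $C^\circ=-\Ccr_n(1/\sigma^2)=\{x_{n+1}\le-\tfrac1\sigma\sum_i|x_i|\}$; I compute $(\lin F)^\perp=\{x_1=\dots=x_{k-1}=0,\ x_k+\dots+x_n+\sigma x_{n+1}=0\}$ and find that on this slice the defining inequality collapses to $x_k,\dots,x_n\ge0$. The generators $\eee_j-\tfrac1\sigma\eee_{n+1}$ $(k\le j\le n)$ then have scalar products $\tfrac1{\sigma^2}+\delta_{ij}$, so $N_F(C)\cong\Cs_{n-k+1}(1/\sigma^2)$ and $\alpha(N_F(C))=\gs_{n-k+1}(1/\sigma^2)$. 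For the crosspolytope cone, $C^\circ=-\Ccu_n(1/\sigma^2)$ and the analogous slice gives generators $g_\delta=\sigma\sum_{i\le k}\eee_i+\sigma\sum_{j>k}\delta_j\eee_j-\eee_{n+1}$ with $\langle g_\delta,g_{\delta'}\rangle=(k\sigma^2+1)+\sigma^2\langle\delta,\delta'\rangle$. This is not yet in standard form, and the one nonobvious step is to rescale all generators by $1/\sigma$ (which leaves the positive hull, hence the angle, unchanged): the products become $(\tfrac1{\sigma^2}+k)+\langle\delta,\delta'\rangle$, so $N_F(C)\cong\Ccu_{n-k}(1/\sigma^2+k)$ and $\alpha(N_F(C))=\gcu_{n-k}(1/\sigma^2+k)$. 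Multiplying the three factors yields \eqref{eq:Ccu_intrinsic_volume} and \eqref{eq:Ccr_intrinsic_volume}.

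Finally, the exceptional values come from the apex and the top face, which fall outside the $\pos(G)$ pattern. For $\Ccu_n$ at $k=0$ the only face is $\{0\}$, whose normal cone is all of $C^\circ=-\Ccr_n(1/\sigma^2)$; since solid angle is reflection-invariant, $\upsilon_0=\alpha(C^\circ)=\gcr_n(1/\sigma^2)$. For $\Ccr_n$ at $k=n+1$ the only face is $C$ itself, with normal cone $\{0\}$ of angle $1$, so $\upsilon_{n+1}=\alpha(C)=\gcr_n(\sigma^2)$. I will also record that the remaining boundary values, $k=n+1$ for the cube and $k=0$ for the crosspolytope, are already produced by the general formulas via $\gs_0=1$ and $\binom{n}{0}=1$, so no separate treatment is needed there. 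The main obstacle throughout is the normal-cone identification: verifying that the polar-plus-slice intersection genuinely reduces to the clean sign or max constraints, and, in the crosspolytope case, noticing that the target form only appears after the global rescaling by $1/\sigma$.
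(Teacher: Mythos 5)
Your proposal is correct, and for the cone $\Ccu_n(\sigma^2)$ it essentially reproduces the paper's own proof: the same reduction by symmetry to one representative face, the same Gram-matrix identification of the face as $\Ccu_{k-1}(\sigma^2+n-k+1)$ (Lemma \ref{lem:face_Ccube=Ccube}), and the same conclusion $N_F\cong \Cs_{n-k+1}(1/\sigma^2)$ --- your slice computation of $C^\circ\cap(\lin F)^\perp$ lands on exactly the generators $-\frac{1}{\sigma}\eee_{n+1}+\eee_j$, $j=k,\dots,n$, that the paper obtains in Lemma \ref{lem:face_Ccu_nomal_cone} by polarizing the tangent cone, so the two normal-cone derivations differ only in presentation. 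The genuine divergence is the crosspolytope half: the paper never analyses the face structure of $\Ccr_n(\sigma^2)$ at all, but deduces \eqref{eq:Ccr_intrinsic_volume} in one line from \eqref{eq:Ccu_intrinsic_volume} via the polarity identity $\upsilon_j(C^\circ)=\upsilon_{m-j}(C)$ (cited from Amelunxen) together with Proposition \ref{prop:duality_cones}, whereas you compute directly that a $k$-face of $\Ccr_n(\sigma^2)$ is isometric to $\Cs_k(\sigma^2)$ and its normal cone, after the global rescaling by $1/\sigma$, to $\Ccu_{n-k}\bigl(\frac{1}{\sigma^2}+k\bigr)$; I verified the slice collapse (on $x_1=\dots=x_k=-\sigma x_{n+1}$ the constraint of $-\Ccu_n(1/\sigma^2)$ reduces to $\max_{j>k}|x_j|\le -\sigma x_{n+1}$) and the Gram matrices, and they are right. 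The paper's route buys brevity at the cost of an external fact about conic intrinsic volumes under polarity; your route is self-contained, uses only the scalar-product criterion the paper sets up in Section \ref{sec:defnition_cones}, and yields as a byproduct the isometry types of the faces and normal cones of $\Ccr_n(\sigma^2)$, which the polarity shortcut does not provide (and which parallels Lemma \ref{Lem:D_n=C_n-k}). One small tightening: your remark that $k=0$ for \eqref{eq:Ccr_intrinsic_volume} needs ``no separate treatment'' still requires the one-line apex computation $\upsilon_0(\Ccr_n(\sigma^2))=\alpha\bigl((\Ccr_n(\sigma^2))^\circ\bigr)=\alpha\bigl(-\Ccu_n(1/\sigma^2)\bigr)=\gcu_n(1/\sigma^2)$, exactly analogous to your treatment of $\upsilon_0(\Ccu_n(\sigma^2))$, since the apex is not a cone over a face of the polytope and so falls outside your counting argument.
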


\section{Absorption probabilities}\label{sec:absorption_probab}
In this section we give some applications of Theorem~\ref{thm:intrinsic_volumes} to the determination of absorption probabilities of certain random polytopes.

\subsection{Gaussian projections of regular polytopes}
Let $X_1,\ldots, X_n$ be independent standard normal random points in $\R^d$.
The \textit{Gaussian polytope} is defined as the convex hull of these random points, i.e.\
$$
\Pgau_{n,d} := \conv \{X_1,\ldots, X_n\}.
$$
Similarly, the \textit{symmetric Gaussian polytope} $\Psym_{n,d}$ is defined as the convex hull of these points along with their negatives, i.e.\
$$
\Psym_{n,d} := \conv \{X_1, -X_1, X_2,\ldots, X_n, -X_n\}.
$$
Finally, the \textit{Gaussian zonotope} is the Minkowski sum of $n$ Gaussian intervals, i.e.\ with $X_1,\ldots, X_n$ as before,
$$
\Pzono_{n,d} := \sum_{i=1}^{n} \conv\{X_i, -X_i\} = \left\{\sum_{i=1}^n \lambda_i X_i: \lambda_1,\ldots,\lambda_n \in [-1,1] \right\}.
$$
	\begin{figure}[ht]
		\begin{minipage}[c]{0.3\textwidth}
			\includegraphics[width=\textwidth]{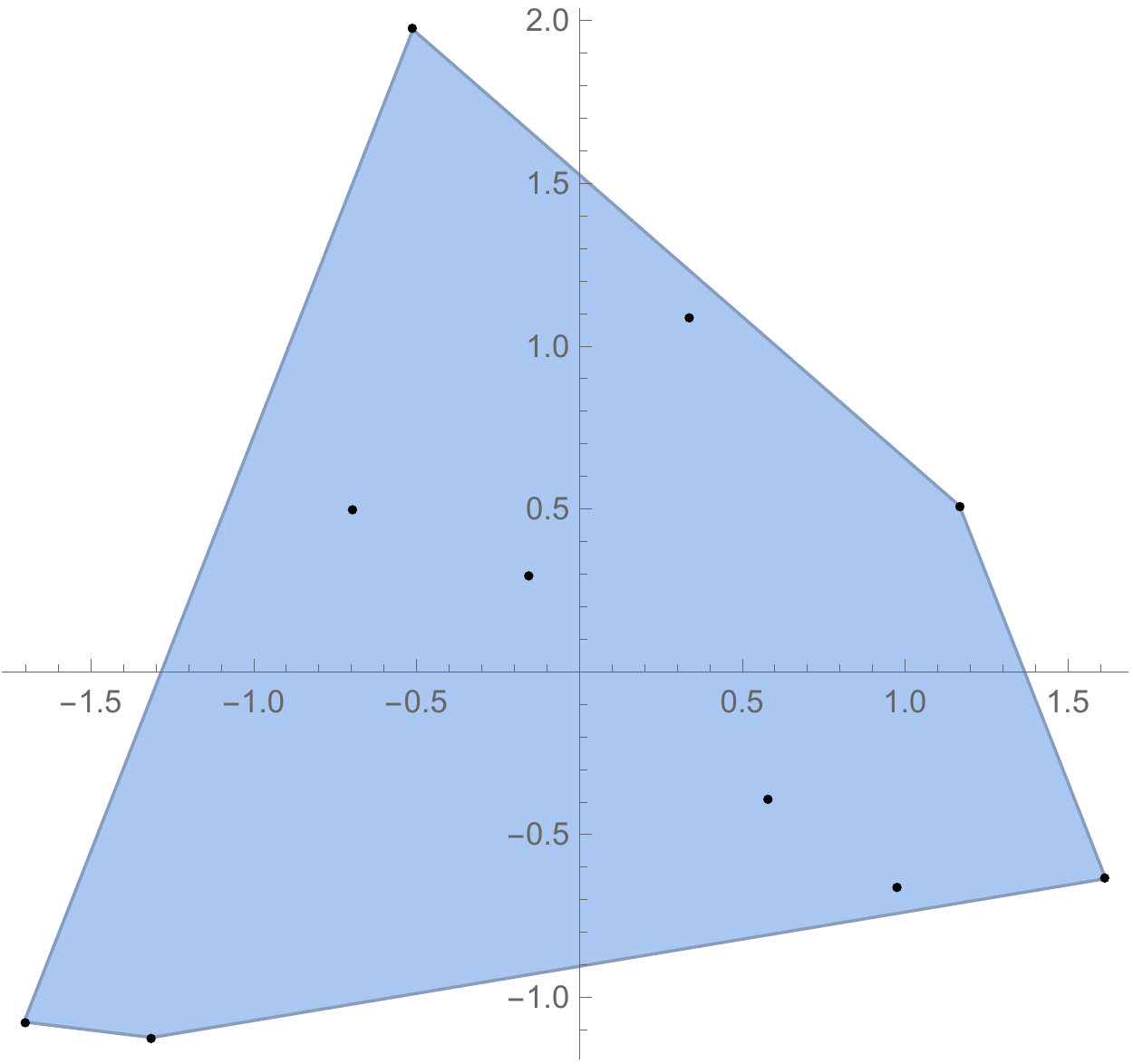}
			\vspace*{21pt}
		\end{minipage}
		\begin{minipage}[c]{0.3\textwidth}
			\includegraphics[width=\textwidth]{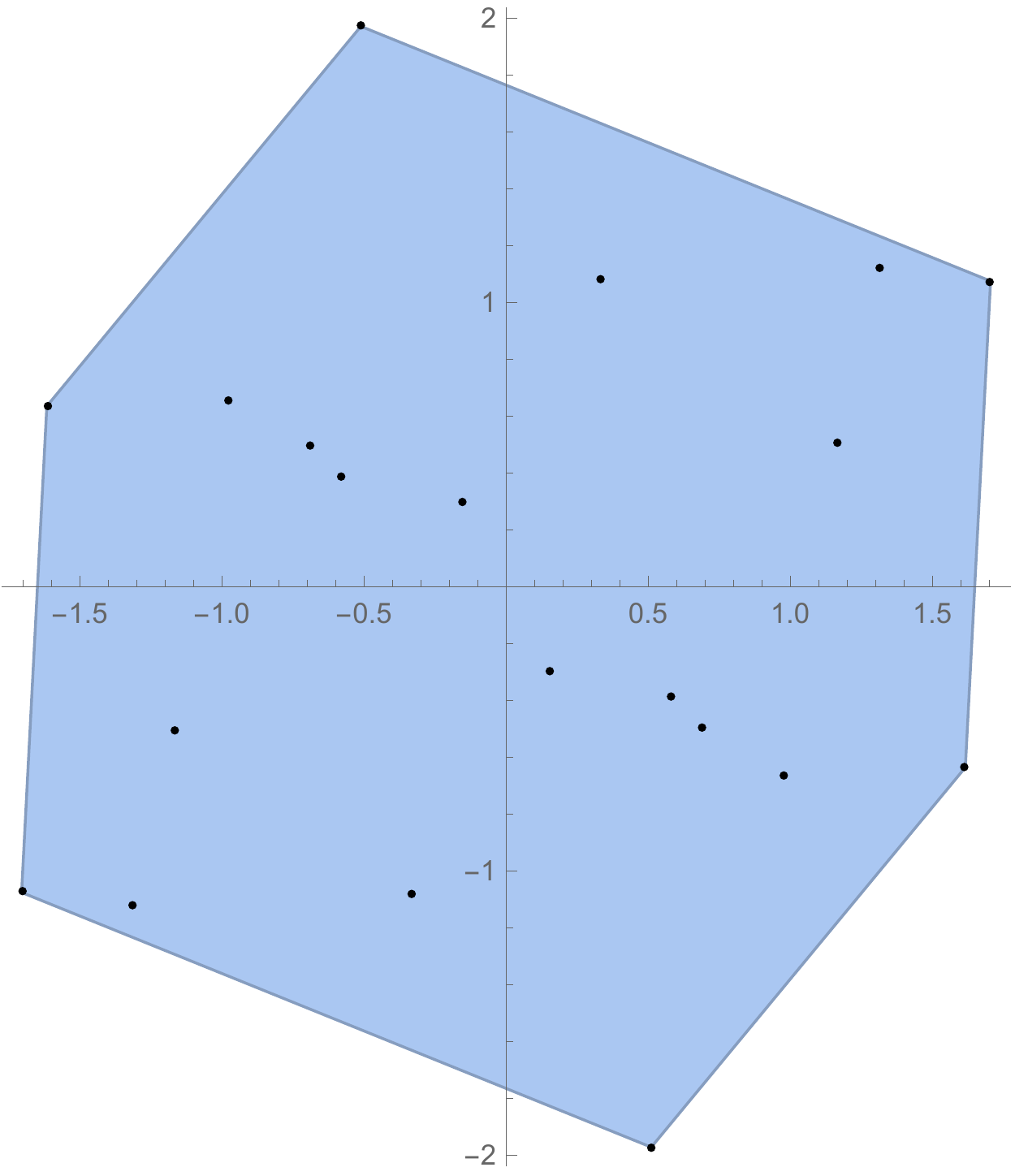}
		\end{minipage}
		\begin{minipage}[c]{0.3\textwidth}
			\includegraphics[width=\textwidth]{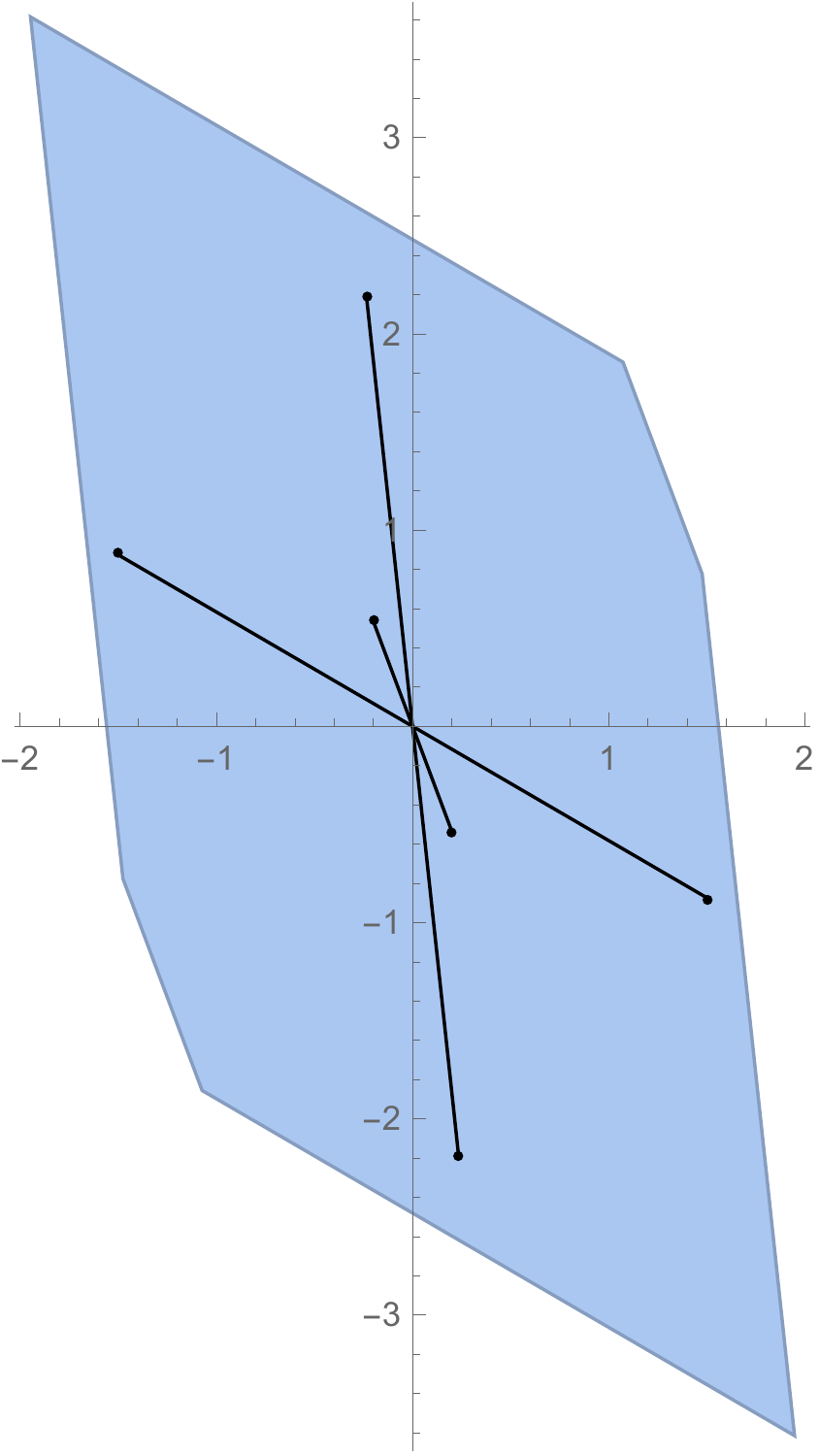}
		\end{minipage}
		\caption{Examples of a Gaussian polytope, a symmetric Gaussian polytope and a Gaussian zonotope in dimension 2}
	\end{figure}
These three random polytopes are related to the three regular polytopes via the notion of Gaussian projection. Let $P$ be any (deterministic) polytope in $\R^n$. Let also $X$ be a $d\times n$-matrix whose entries are i.i.d.\ standard Gaussian random variables. The columns of $X$ can be identified with the random vectors $X_1,\ldots,X_n$ introduced above. We may consider $X: \R^n \to\R^d$ as a random linear map.  Then, the \textit{Gaussian projection} of $P$ (see, e.g., \cite{vershynin_book}) is defined as the random polytope $X P = \{Xp : p\in P\}\subset \R^d$. It is now easy to check that by taking $P$ to be the regular simplex $\conv \{e_1,\ldots,e_n\}$, the regular crosspolytope $\conv \{\pm e_1,\ldots,\pm e_n\}$ and the cube $[-1,1]^n$, we recover the random polytopes $\Pgau_{n,d}$, $\Psym_{n,d}$, $\Pzono_{n,d}$ as the corresponding Gaussian projections. 
	
The associated absorption probability is the probability of the event that a deterministic point $x\in\R^d$ is contained in the polytope $\Pgau_{n,d}$ (and similarly for $\Psym_{n,d}$ and  $\Pzono_{n,d}$). Since the standard Gaussian distribution is invariant under rotations, this probability depends on $x$ only by its Euclidean norm $|x|$. Instead of the absorption probability itself it is convenient to analyse the probabilities of non-absorption $\fgau_{n,d},\fsym_{n,d}, \fzono_{n,d} :[0,\infty) \to [0,1]$ defined by
\begin{align*}
\fgau_{n,d} (|x|) := \P(x\notin \Pgau_{n,d}),
\quad
\fsym_{n,d} (|x|) := \P(x\notin \Psym_{n,d}),
\quad
\fzono_{n,d} (|x|) := \P(x\notin \Pzono_{n,d}).
\end{align*}

An expression for the non-absorption probability $\fgau_{n,d}$
was provided in~\cite[Theorem 1.2]{kabluchko2017absorption}.  The aim of this section is to give similar expressions for the non-absorption probabilities $\fsym_{n,d}$ and $\fzono_{n,d}$ of symmetric Gaussian polytopes and Gaussian zonotopes.

\subsection{Absorption probabilities for polytopes spanned by Gaussian points}
For a $d$-dimensional standard normal random vector $X \sim \Normal^d(0,1)$ independent of $X_1,\ldots,X_n$ and $\sigma>0$ we define
	\begin{align*}
	\psym_{n,d}(\sigma^2):&=\P ( \sigma X \notin \Psym_{n,d}),\\
	\pzono_{n,d}(\sigma^2):&=\P ( \sigma X \notin \Pzono_{n,d}).
	\end{align*}
	These probabilities differ from the non-absorption probabilities, because here the point $\sigma X$ is random. Calculating them will be a first step in determining the absorption probability because, as the following proposition states, there is a connection between these two functions.
The proposition is based on \cite[Corollary 1.1]{kabluchko2017absorption}.
\begin{proposition} \label{prop:p_Laplace_trafo_of_f}
Let $P_n\subset \R^d$ be a Gaussian polytope, a symmetric Gaussian polytope or a Gaussian zonotope generated by $n$ independent Gaussian points $X_1,\ldots,X_n$. Its non-absorption probability $f^{P_n}\colon [0,\infty) \to [0,1]$ is defined by $f^{P_n}(|x|) :=  \P(x\notin P_n)$. Then,
		$$
		\int_{0}^{\infty} f^{P_n}\left(\sqrt{2 u}\right) u^{\frac{d}{2}-1} \eee^{-\lambda u} \dd u
		=
		\Gamma\left(\frac{d}{2}\right) \lambda^{-\frac{d}{2}} p^{P_n}\left( \frac{1}{\lambda} \right)
		$$
		for every $\lambda >0$. Here,  $p^{P_n}(\sigma^2):= \P ( \sigma X \notin P_n)$.
	\end{proposition}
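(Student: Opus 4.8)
The plan is to reduce the claimed Laplace-type identity to a single change of variables, after first recognizing $p^{P_n}$ as an average of $f^{P_n}$ against the law of the norm of a Gaussian vector. Since the points $X_1,\ldots,X_n$ generating $P_n$ are standard Gaussian, the distribution of $P_n$ is invariant under orthogonal transformations: for any rotation $O$ we have $OP_n\eqdistr P_n$, and $x\notin P_n \iff Ox\notin OP_n$, so $\P(x\notin P_n)$ depends on $x$ only through $|x|$. This is exactly what makes $f^{P_n}$ well defined. The auxiliary vector $X\sim\Normal^d(0,1)$ is independent of $P_n$, so conditioning on $X$ and invoking this rotational invariance gives
\[
p^{P_n}(\sigma^2) = \P(\sigma X \notin P_n) = \E\big[\, \P(\sigma X \notin P_n \mid X) \,\big] = \E\big[\, f^{P_n}(\sigma |X|) \,\big].
\]
The interchange of expectation and conditional probability is harmless because $f^{P_n}$ takes values in $[0,1]$, so Fubini applies without further hypotheses.

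Next I would make the law of $|X|$ explicit. The squared norm $|X|^2$ has a chi-squared distribution with $d$ degrees of freedom, hence density $\frac{1}{2^{d/2}\Gamma(d/2)} s^{d/2-1} \eee^{-s/2}$ on $(0,\infty)$. Writing the expectation above as an integral against this density yields
\[
p^{P_n}(\sigma^2) = \frac{1}{2^{d/2}\Gamma(d/2)} \int_0^\infty f^{P_n}\big(\sigma \sqrt{s}\big)\, s^{d/2-1} \eee^{-s/2} \, \dd s .
\]

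Finally I would specialize to $\sigma^2 = 1/\lambda$ and substitute $s = 2\lambda u$. Under this substitution $\sigma\sqrt{s}=\sqrt{s/\lambda}=\sqrt{2u}$, the exponential becomes $\eee^{-\lambda u}$, and the Jacobian $2\lambda$ together with the factor $(2\lambda u)^{d/2-1}$ collapses the prefactor to $\lambda^{d/2}/\Gamma(d/2)$; rearranging gives exactly
\[
\int_0^\infty f^{P_n}\big(\sqrt{2u}\big)\, u^{d/2-1} \eee^{-\lambda u} \, \dd u = \Gamma\!\left(\tfrac{d}{2}\right) \lambda^{-d/2}\, p^{P_n}\!\left(\tfrac{1}{\lambda}\right).
\]
I do not expect a genuine obstacle: the only points requiring care are the justification of the conditioning step (guaranteed by independence of $X$ and $P_n$ together with boundedness of $f^{P_n}$) and the bookkeeping of constants in the chi-squared density and the change of variables. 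The argument applies verbatim to all three polytopes $\Pgau_{n,d}$, $\Psym_{n,d}$ and $\Pzono_{n,d}$, since it uses only the rotational invariance of their common distribution and the independence of $X$; alternatively, the same identity can be read off from \cite[Corollary 1.1]{kabluchko2017absorption}.
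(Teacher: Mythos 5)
Your proof is correct and follows essentially the same route as the paper's: both condition on the norm of $X$ (you via the $\chi^2_d$ density of $|X|^2$, the paper via the $\chi_d$ density of $|X|$, a trivial reparametrization) and then perform the change of variables $s=2\lambda u$, equivalently $\sigma r=\sqrt{2u}$ with $\lambda=1/\sigma^2$. Your explicit justification of the rotational-invariance and conditioning steps is a minor elaboration the paper leaves implicit, not a different argument.
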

	\begin{proof}
		Let $X\sim \Normal^d(0,1)$ be independent of $X_1,\ldots,X_n$ as in the definition of $p^{P_n}$. Its Euclidean norm $|X|$ has $\chi$ distribution with $d$ degrees of freedom. Conditioning on the event $|X|=r$ and integrating over $r>0$ we obtain
		$$
		p^{P_n}(\sigma^2)
		=
		\P\left[ \sigma X \notin \conv \{X_1,\ldots,X_n\} \right]
		=
		\int_{0}^{\infty} f^{P_n}(\sigma r) \frac{2^{1-\frac{d}{2}}}{\Gamma\left(\frac{d}{2}\right)} r^{d-1} \eee^{-\frac{r^2}{2}} \dd r.
		$$
		Substituting $\sigma r = \sqrt{2u}$, taking into account that $\dd r = \frac{1}{\sigma} \frac{\dd u}{\sqrt{2u}}$ and writing $\lambda = \frac{1}{\sigma^2}$, we obtain
		$$
		p^{P_n}\left(\frac 1 \lambda\right)= \frac{ \lambda^{\frac{d}{2}}}{\Gamma(\frac{d}{2})} \int_{0}^{\infty} f^{P_n}(\sqrt{2u}) u^{\frac{d}{2}-1}\eee^{-\lambda u} \dd u,
		$$
		thus completing the proof.
	\end{proof}

\subsection{Expressions for  \texorpdfstring{$p^{P_n}$}{p\^P\_n}}
We will now provide explicit expressions for $p^{P_n}$.
\begin{theorem}\label{thm:p_symm}
	For all $n,d\in\N$ such that $n\geq d$ the probability just defined satisfies
	$$
	\psym_{n,d}(\sigma^2)=\P ( \sigma X \notin \Psym_{n,d})=2(\bsym_{n,d-1}(\sigma^2)+\bsym_{n,d-3}(\sigma^2)+\ldots)
	$$
	with
$$
\bsym_{n,k}(r)
:=
\upsilon_k(\Ccr_n(r))
=
\begin{cases}
2^{k} \binom{n}{k} \gcu_{n-k}\left(\frac{1}{r}+k\right) \gs_{k}(r), & \text{ if } k\in \{0,\ldots,n\},\\
\gcr_n(r), &\text{ if } k=n+1,
\end{cases}
$$
as in Theorem \ref{thm:intrinsic_volumes} and $\bsym_{n,k}(r) = 0$ for $k\notin \{0,\ldots,n+1\}$.
\end{theorem}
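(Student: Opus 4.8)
The plan is to realize the non-absorption event as the event that a random linear subspace misses the cone $\Ccr_n(\sigma^2)$, and then to invoke the conic kinematic formula.

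First I would set up the Gaussian-projection picture. Writing $\tilde M=[X_1,\dots,X_n,X]$ for the $d\times(n+1)$ matrix whose columns are the i.i.d.\ Gaussian vectors $X_1,\dots,X_n$ together with the independent Gaussian vector $X$, I note that $\sigma X=\sum_i\mu_iX_i$ with $\sum_i|\mu_i|\le 1$ means precisely that $w=(-\mu_1,\dots,-\mu_n,\sigma)\in\R^{n+1}$ lies in $\ker\tilde M$ and, by the description of $\Ccr_n(\sigma^2)$ in Proposition~\ref{prop:alt_forms_of_cones}, also in $\Ccr_n(\sigma^2)$. Conversely, any nonzero element of $\ker\tilde M\cap\Ccr_n(\sigma^2)$ has strictly positive last coordinate, since the defining inequality $w_{n+1}\ge\sigma\sum_{i\le n}|w_i|$ forces $w_{n+1}>0$ once $w\neq 0$; rescaling it to last coordinate $\sigma$ recovers such a representation. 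Hence $\sigma X\in\Psym_{n,d}$ if and only if $\ker\tilde M\cap\Ccr_n(\sigma^2)\neq\{0\}$, so that $\psym_{n,d}(\sigma^2)=\P(\ker\tilde M\cap\Ccr_n(\sigma^2)=\{0\})$. Because $\sigma X$ has a density and is independent of $X_1,\dots,X_n$, it a.s.\ avoids the measure-zero boundary of $\Psym_{n,d}$, so I need not track whether the intersection meets $\partial\Ccr_n(\sigma^2)$.

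Next, since $\tilde M$ has i.i.d.\ Gaussian entries and $n\ge d$, its kernel is a uniformly distributed (rotationally invariant) linear subspace of $\R^{n+1}$ of dimension $n+1-d\ge 1$. The conic kinematic formula (see \cite[Section~6.5]{SW08}, \cite{amelunxen}) applied to this random subspace and the full-dimensional, non-subspace cone $\Ccr_n(\sigma^2)$ gives
$$\P\big(\ker\tilde M\cap\Ccr_n(\sigma^2)\neq\{0\}\big)=2\sum_{j\ge 0}\upsilon_{d+1+2j}\big(\Ccr_n(\sigma^2)\big),$$
where the lowest index and parity are fixed by $N-\dim\ker+1=(n+1)-(n+1-d)+1=d+1$.

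Finally I would convert this into the claimed form. Complementing and using the conic even/odd relation $\sum_{k\text{ even}}\upsilon_k(C)=\sum_{k\text{ odd}}\upsilon_k(C)=\tfrac12$, valid for any cone $C$ that is not a linear subspace, the indices congruent to $d-1\bmod 2$ split into those $\ge d+1$ and those $\le d-1$, so that $1-2\sum_{j\ge0}\upsilon_{d+1+2j}=2\sum_{j\ge0}\upsilon_{d-1-2j}$. Thus $\psym_{n,d}(\sigma^2)=2\sum_{j\ge0}\upsilon_{d-1-2j}(\Ccr_n(\sigma^2))=2(\bsym_{n,d-1}+\bsym_{n,d-3}+\cdots)$, reading off $\bsym_{n,k}=\upsilon_k(\Ccr_n(\sigma^2))$ from Theorem~\ref{thm:intrinsic_volumes} and using $\bsym_{n,k}=0$ for $k\notin\{0,\dots,n+1\}$ to terminate the sum. (Alternatively one can bypass the even/odd relation by polarizing: $\psym$ is the probability that the $d$-dimensional subspace $(\ker\tilde M)^\perp$ meets $(\Ccr_n(\sigma^2))^\circ$ nontrivially, and then the duality $\upsilon_k(C^\circ)=\upsilon_{n+1-k}(C)$, together with Proposition~\ref{prop:duality_cones}, lands directly on the stated expression.) The step I expect to be the main obstacle is pinning down the exact indexing and the factor $2$ in the conic kinematic formula and checking the degenerate endpoints (where the sum reaches $\upsilon_0$ or $\upsilon_1$); the geometric reduction of the first paragraph and the algebra of the last are routine once that formula is in hand.
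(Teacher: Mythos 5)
Your proposal is correct and follows essentially the same route as the paper: both reduce $\sigma X\in\Psym_{n,d}$ to the event that the kernel of the Gaussian matrix $[X_1,\ldots,X_n,X]$ (a uniform $(n+1-d)$-dimensional subspace of $\R^{n+1}$) meets the cone $\Ccr_n(\sigma^2)$ nontrivially, then apply the conic Crofton formula and Theorem~\ref{thm:intrinsic_volumes}. The only cosmetic differences are that the paper symmetrizes $\sigma X\mapsto-\sigma X$ before identifying the cone and invokes the $\P(C\cap L=\{0\})$ form of Theorem~\ref{thm:conic_crofton} directly, whereas you use the complementary form plus the even/odd half-sum relation --- which is exactly how the two statements of that theorem are related.
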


\begin{theorem} \label{thm:p_zono}
For all $n,d\in\N$ such that $n\geq d$ we have
	$$
		\pzono_{n,d}(\sigma^2) = \P(\sigma X \notin \Pzono_{n,{d}})
		=
		2(\bcu_{n,d-1}(\sigma^2)+\bcu_{n,d-3}(\sigma^2)+\ldots),
	$$
	where
	$$
		\bcu_{n,k}(r)
		:=	
		\upsilon_k\left( \Ccu_n(r) \right)
		=
		2^{n-k+1} \binom{n}{k-1} \gcu_{k-1}\left(r+n-k+1\right) \gs_{n-k+1}\left( \frac{1}{r} \right)
	$$
for $k\in \{0,\ldots,n+1\}$ and $\bcu_{n,k}(r) = 0$ for $k\notin \{0,\ldots,n+1\}$.
\end{theorem}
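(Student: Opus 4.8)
The plan is to mirror the proof of Theorem~\ref{thm:p_symm}, replacing the crosspolytope cone $\Ccr_n$ by the cube cone $\Ccu_n$. The starting point is that $\Pzono_{n,d}$ is the Gaussian projection of the cube $[-1,1]^n$, whose associated cone is $\Ccu_n(\sigma^2)$, and that the non-absorption event $\{\sigma X\notin\Pzono_{n,d}\}$ can be rephrased as a statement about the Gaussian image of this cone. First I would record the separation characterization: since the support function of $[-1,1]^n$ is the $\ell^1$-norm, we have $\sigma X\in\Pzono_{n,d}$ iff $\sigma\langle v,X\rangle\le\sum_{i=1}^n|\langle v,X_i\rangle|$ for all $v\in\R^d$. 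Hence, up to a null event, $\sigma X\notin\Pzono_{n,d}$ iff there is a $v$ with $\langle v,\,-\sigma X+\sum_{i=1}^n\eps_i X_i\rangle<0$ for every $\eps\in\{-1,1\}^n$, i.e.\ iff a separating hyperplane through the origin exists.

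Next I would introduce the linear map $A\colon\R^{n+1}\to\R^d$ with $A\eee_i=X_i$ for $i\le n$ and $A\eee_{n+1}=-X$. Then $A$ sends the generators $\sigma\eee_{n+1}+\sum_{i=1}^n\eps_i\eee_i$ of $\Ccu_n(\sigma^2)$ to the vectors $w_\eps:=-\sigma X+\sum_{i=1}^n\eps_i X_i$, and the condition above says precisely that the image cone $A(\Ccu_n(\sigma^2))=\pos\{w_\eps:\eps\in\{-1,1\}^n\}$ is not all of $\R^d$. Since $-X,X_1,\ldots,X_n$ are i.i.d.\ $\Normal^d(0,1)$, the matrix $A$ has i.i.d.\ standard Gaussian entries, so $\pzono_{n,d}(\sigma^2)=\P(A(\Ccu_n(\sigma^2))\ne\R^d)$; the hypothesis $n\ge d$ ensures we are in the regime $\dim\Ccu_n(\sigma^2)=n+1>d$ in which this event is nontrivial. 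The remaining task is to invoke the general formula for the probability that a Gaussian image of a polyhedral cone fails to be the whole space. Via the polarity $A(C)\ne\R^d\iff\range(A^{T})\cap C^{\circ}\ne\{0\}$ this is a conic Crofton/Grassmann-angle statement, and the result of~\cite{kabluchko2017absorption} (Corollary~1.1, the same tool underlying Proposition~\ref{prop:p_Laplace_trafo_of_f} and Theorem~\ref{thm:p_symm}) yields
$$
\P(A(C)\ne\R^d)=2\bigl(\upsilon_{d-1}(C)+\upsilon_{d-3}(C)+\upsilon_{d-5}(C)+\cdots\bigr).
$$
Applying this with $C=\Ccu_n(\sigma^2)$ and substituting $\upsilon_k(\Ccu_n(\sigma^2))=\bcu_{n,k}(\sigma^2)$ from Theorem~\ref{thm:intrinsic_volumes} gives the claimed identity.

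The step requiring the most care is the bookkeeping at the ends of the alternating sum, in particular the term $k=0$, which enters exactly when $d$ is odd. Here one must read $\bcu_{n,0}(\sigma^2)=\upsilon_0(\Ccu_n(\sigma^2))$ as the exceptional value $\gcr_n(1/\sigma^2)$ supplied by Theorem~\ref{thm:intrinsic_volumes}, and \emph{not} as the product expression $2^{n-k+1}\binom{n}{k-1}\cdots$, which degenerates at $k=0$ (the binomial coefficient $\binom{n}{-1}$ vanishes and the index on $\gcu_{k-1}$ becomes negative). A direct check in the base case $n=d=1$, where $\Pzono_{1,1}=[-|X_1|,|X_1|]$ and $\pzono_{1,1}(\sigma^2)=\P(\sigma^2X^2>X_1^2)=1-\tfrac{2}{\pi}\arctan(1/\sigma)$, confirms that the genuine value $2\gcr_1(1/\sigma^2)$ is correct.

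Thus the only substantive external input is the cited Gaussian-image formula, which I would treat as a black box; the separation characterization, the verification that $A$ is a genuine i.i.d.\ Gaussian matrix, and the substitution of the intrinsic volumes are routine. The main obstacle is therefore not analytical but organizational: getting the reduction to $\{A(\Ccu_n(\sigma^2))\ne\R^d\}$ with the correct parameter $\sigma^2$ (the $\sigma$ in front of $X$ must match the height $\sigma$ of the cone's generators), and handling the $k=0$ endpoint consistently with the exceptional case of Theorem~\ref{thm:intrinsic_volumes}.
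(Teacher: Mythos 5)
Your proof is correct, and it is the polar-dual twin of the paper's argument rather than a verbatim copy. The paper rewrites $\{\sigma X\notin\Pzono_{n,d}\}$ by hand: starting from the Minkowski-sum representation of the zonotope and rescaling the coefficients $\tilde\alpha_i\in[-1,1]$ by $\lambda>0$, it shows the event equals $\{C\cap U=\{0\}\}$ exactly (no null set needed), where $U=\{y\in\R^{n+1}:\sum_{i=1}^n y_iX_i+y_{n+1}X=0\}$ is the kernel of the Gaussian matrix, a uniformly distributed subspace of codimension $d$, and $C\cong\Ccu_n(\sigma^2)$ by \eqref{eq:Ccu_alt_form}; it then applies the conic Crofton formula (Theorem \ref{thm:conic_crofton}) directly. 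You instead separate via the support function of the cube zonotope, pass to the image cone $A(\Ccu_n(\sigma^2))=\pos\{w_\eps\}$, and invoke the Gaussian-image formula as a black box. The two reductions are equivalent through $\range(A^{T})=(\Ker A)^\perp$ and the polarity identity $\upsilon_j(C^\circ)=\upsilon_{n+1-j}(C)$: your $\P(A(C)\neq\R^d)=\P\bigl(\range(A^{T})\cap C^\circ\neq\{0\}\bigr)$ is exactly the Crofton computation the paper performs on $C\cap U$, so the cited black box is proved by the very argument the paper writes out. What your route buys is brevity and a cleaner conceptual link to the Gaussian-projection picture from Section \ref{sec:absorption_probab}; what it costs is an extra null-event verification (strict versus weak separation, i.e.\ $0\in\partial\conv\{w_\eps\}$) that the paper's exact event identity avoids. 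Your endpoint discussion is a genuine refinement: the theorem's displayed product formula for $\bcu_{n,k}$ indeed degenerates at $k=0$ (where $\binom{n}{-1}=0$ but $\upsilon_0(\Ccu_n(r))=\gcr_n(1/r)\neq 0$), this term enters precisely when $d$ is odd, and your check in the case $n=d=1$ against the half-Cauchy computation correctly confirms that the exceptional value from Theorem \ref{thm:intrinsic_volumes} is the one to use.
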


Combining Theorems~\ref{thm:p_symm} and~\ref{thm:p_zono} with Proposition \ref{prop:p_Laplace_trafo_of_f} we arrive at the following
\begin{corollary} \label{Cor:like_1.3}
	For every $\lambda>0$ and for $n,d\in \N$ such that $n\geq d$ the absorption probabilities $\fsym_{n,d}$ and $\fzono_{n,d}$
	satisfy the equations
	\begin{align}
		\int_{0}^{\infty} \fsym_{n,d}\left(\sqrt{2 u}\right) u^{\frac{d}{2}-1} \eee^{-\lambda u} \dd u
		&=
		2 \Gamma\left(\frac{d}{2}\right) \lambda^{-\frac{d}{2}} \left(\bsym_{n,d-1}\left(\frac{1}{\lambda}\right)+\bsym_{n,d-3}\left(\frac{1}{\lambda}\right)+\ldots\right),
		\\
		\int_{0}^{\infty} \fzono_{n,d}\left(\sqrt{2 u}\right) u^{\frac{d}{2}-1} \eee^{-\lambda u} \dd u
		&=
		2 \Gamma\left(\frac{d}{2}\right) \lambda^{-\frac{d}{2}} \left(\bzono_{n,d-1}\left(\frac{1}{\lambda}\right)+\bzono_{n,d-3}\left(\frac{1}{\lambda}\right)+\ldots\right).
	\end{align}
\end{corollary}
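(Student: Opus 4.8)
The plan is to chain together the three preceding results: the statement is a direct consequence of Proposition~\ref{prop:p_Laplace_trafo_of_f} combined with Theorems~\ref{thm:p_symm} and~\ref{thm:p_zono}, and requires no computation beyond a single substitution of the argument.

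First I would specialize Proposition~\ref{prop:p_Laplace_trafo_of_f} to the symmetric Gaussian polytope by taking $P_n = \Psym_{n,d}$. With this choice the proposition's non-absorption probability $f^{P_n}$ is exactly $\fsym_{n,d}$ and its companion $p^{P_n}$ is exactly $\psym_{n,d}$, so the proposition reads
$$
\int_{0}^{\infty} \fsym_{n,d}\left(\sqrt{2u}\right) u^{\frac{d}{2}-1} \eee^{-\lambda u}\, \dd u = \Gamma\left(\frac{d}{2}\right) \lambda^{-\frac{d}{2}} \psym_{n,d}\left(\frac{1}{\lambda}\right).
$$
Next I would insert the explicit value of $\psym_{n,d}(1/\lambda)$ supplied by Theorem~\ref{thm:p_symm}. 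Since $n \geq d$ by hypothesis, that theorem applies, and evaluating it at $\sigma^2 = 1/\lambda$ gives
$$
\psym_{n,d}\left(\frac{1}{\lambda}\right) = 2\left(\bsym_{n,d-1}\left(\frac{1}{\lambda}\right) + \bsym_{n,d-3}\left(\frac{1}{\lambda}\right) + \ldots\right).
$$
Substituting this into the previous display yields precisely the first asserted identity; the factor $2$ and the alternating-parity sum are inherited verbatim from Theorem~\ref{thm:p_symm}.

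I would then repeat the identical argument with $P_n = \Pzono_{n,d}$, so that $f^{P_n} = \fzono_{n,d}$ and $p^{P_n} = \pzono_{n,d}$, apply Proposition~\ref{prop:p_Laplace_trafo_of_f} in the same form, and invoke Theorem~\ref{thm:p_zono} (again using $n\geq d$) to replace $\pzono_{n,d}(1/\lambda)$ by $2(\bzono_{n,d-1}(1/\lambda) + \bzono_{n,d-3}(1/\lambda) + \ldots)$. This delivers the second identity.

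There is no substantive obstacle here; the only points requiring attention are bookkeeping. One must check that the hypothesis $n \geq d$ of the corollary matches the range of validity of Theorems~\ref{thm:p_symm} and~\ref{thm:p_zono}, and that $\sigma^2 = 1/\lambda$ is indeed the value at which $\psym_{n,d}$ and $\pzono_{n,d}$ are to be evaluated in order for the $\lambda^{-d/2}$ prefactor to come out as stated. Both are immediate from the statements as written, so the corollary follows by straightforward substitution.
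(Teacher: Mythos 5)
Your proposal is correct and matches the paper exactly: the paper introduces this corollary with the single line ``Combining Theorems~\ref{thm:p_symm} and~\ref{thm:p_zono} with Proposition~\ref{prop:p_Laplace_trafo_of_f} we arrive at the following,'' which is precisely your chain of substitutions with $\sigma^2 = 1/\lambda$. Nothing is missing; the bookkeeping points you flag (the range $n\geq d$ and the evaluation point $1/\lambda$) are the only checks needed.
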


\subsection{Absorption probabilities in dimension \texorpdfstring{$d=2$}{d=2}}
In dimension $d=2$ the equalities in Corollary \ref{Cor:like_1.3} simplify to
\begin{align*}
	\int_{0}^{\infty} \fsym_{n,2}\left(\sqrt{2 u}\right) \eee^{-\lambda u} \dd u
	&=
	\frac{2}{\lambda} \bsym_{n,1}\left(\frac{1}{\lambda}\right),
	\\
	\int_{0}^{\infty} \fzono_{n,2}\left(\sqrt{2 u}\right) \eee^{-\lambda u} \dd u
	&=
	\frac{2}{\lambda} \bzono_{n,1}\left(\frac{1}{\lambda}\right).
\end{align*}
Using that $\gs_1(r)=\frac{1}{2}$ for every $r > -\frac{1}{n}$ and $\gcu_0( r )= \frac{1}{2}$ for every $r > 0$ we have
\begin{align*}
\bsym_{n,1}\left(\frac{1}{\lambda}\right)
&=
\upsilon_1\left(\Ccr_n \left( \frac{1}{\lambda} \right) \right)
=
n\cdot \gcu_{n-1}(\lambda +1),
\\
\bzono_{n,1}\left(\frac{1}{\lambda}\right)
&=
\upsilon_1\left(\Ccu_n \left( \frac{1}{\lambda} \right) \right)
=
2^{n-1} \gs_{n}\left( \lambda \right).
\end{align*}
Thus,
\begin{align}
	\int_{0}^{\infty} \fsym_{n,2}\left(\sqrt{2 u}\right) \eee^{-\lambda u} \dd u
	&=
	\frac{2 n}{\lambda} \gcu_{n-1}(\lambda +1)
	=
	\frac{2 n}{\lambda} \P\left( \frac{1}{\sqrt{\lambda +1}} \xi_{n}\ge \max_{1\le j\le {n-1}}|\xi_j| \right), \label{eq:rhs_laplace_transf}
	\\
	\int_{0}^{\infty} \fzono_{n,2}\left(\sqrt{2 u}\right) \eee^{-\lambda u} \dd u
	&=
	\frac{2^n}{\lambda} \gs_{n}\left( \lambda \right).
\end{align}
So, to calculate $\fsym_{n,2}$ and $\fzono_{n,2}$ it is sufficient to invert the Laplace transforms on the right-hand side. This can be done, for example, by using the Bromwich integral~\cite[Chapter~4]{schiff_book}.
In the case of the symmetric Gaussian polytope, a more explicit inversion is possible and stated in the  following result.

\begin{theorem}
Let $\xi, \xi_1,\ldots,\xi_n$ be independent standard normal random variables. Define $L_n:= \max\{|\xi_1|,\ldots,|\xi_n|\}$. Then, for all $u>0$, we have
$$
\fsym_{n,2} (\sqrt{2u}) = \P\left(\frac {L_n^2 + \xi^2} 2 \leq u\right) +  \frac{\dd}{\dd u} \P\left(\frac {L_n^2 + \xi^2} 2 \leq u\right).
$$
\end{theorem}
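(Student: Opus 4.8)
The plan is to invert the Laplace transform on the right-hand side of \eqref{eq:rhs_laplace_transf} by \emph{verifying} that the proposed expression has the correct transform, and then to appeal to uniqueness. Write $g(u) := \fsym_{n,2}(\sqrt{2u})$, introduce the random variable $W := (L_n^2 + \xi^2)/2$, and let $F(u) := \P(W \le u)$ be its distribution function, so that $F' = f_W$ is the density of $W$. By \eqref{eq:rhs_laplace_transf} it suffices to show that
\[
\int_0^\infty \big(F(u) + F'(u)\big)\,\eee^{-\lambda u}\,\dd u = \frac{2n}{\lambda}\,\gcu_{n-1}(\lambda+1)
\]
for every $\lambda > 0$; the uniqueness theorem for Laplace transforms, together with the continuity of both sides on $(0,\infty)$, then gives the claimed pointwise identity.

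First I would reduce the left-hand transform to a single Laplace--Stieltjes quantity. Since $W > 0$ almost surely we have $F(0) = 0$, the Laplace transform of the density is $\int_0^\infty F'(u)\,\eee^{-\lambda u}\,\dd u = \E[\eee^{-\lambda W}]$, and by Fubini $\int_0^\infty F(u)\,\eee^{-\lambda u}\,\dd u = \lambda^{-1}\,\E[\eee^{-\lambda W}]$. Hence the left-hand side equals $\tfrac{1+\lambda}{\lambda}\,\E[\eee^{-\lambda W}]$, and the goal becomes the clean identity
\[
(1+\lambda)\,\E\big[\eee^{-\lambda W}\big] = 2n\,\gcu_{n-1}(\lambda+1).
\]

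Next I would factor the expectation using independence, $\E[\eee^{-\lambda W}] = \E[\eee^{-\lambda L_n^2/2}]\,\E[\eee^{-\lambda \xi^2/2}]$, where $\E[\eee^{-\lambda\xi^2/2}] = (1+\lambda)^{-1/2}$ is the standard chi-square transform. This collapses the factor $(1+\lambda)$ to $\sqrt{1+\lambda}$, so it remains to prove $\sqrt{1+\lambda}\,\E[\eee^{-\lambda L_n^2/2}] = 2n\,\gcu_{n-1}(\lambda+1)$. Writing $\sigma = \sqrt{1+\lambda}$, I would insert the density $f_{L_n}(\ell) = 2n\,(2\Phi(\ell)-1)^{n-1}\phi(\ell)$ of $L_n = \max_j|\xi_j|$, use the relation $\eee^{-\lambda\ell^2/2}\phi(\ell) = \eee^{-\sigma^2\ell^2/2}/\sqrt{2\pi}$ to absorb the exponential, and substitute $t = \sigma\ell$; the Jacobian cancels the prefactor $\sigma$ and produces exactly $2n\int_0^\infty(2\Phi(t/\sigma)-1)^{n-1}\phi(t)\,\dd t$. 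Finally, conditioning on $\xi_n = t \ge 0$ in the probability defining $\gcu_{n-1}(\sigma^2)$ (the only contribution, since $\max_j|\xi_j|\ge 0$) shows this integral equals $2n\,\gcu_{n-1}(\sigma^2)$, which closes the identity.

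The computations are elementary; the only genuine content is the correct guess $W = (L_n^2+\xi^2)/2$ together with the structure $F + F'$, so that the substitution $t = \sigma\ell$ with $\sigma^2 = 1+\lambda$ converts the Laplace-domain expression back into the Gaussian probability $\gcu_{n-1}$. The main point requiring a word of care is the passage from equality of Laplace transforms to the pointwise statement, which is justified by the continuity of $u\mapsto \fsym_{n,2}(\sqrt{2u})$ and of $F + F'$ on $(0,\infty)$, the latter because the convolution density $f_W$ is continuous there.
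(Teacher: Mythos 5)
Your proof is correct, but it runs in the opposite direction from the paper's. The paper \emph{constructs} the inverse transform: starting from the right-hand side of \eqref{eq:rhs_laplace_transf}, it substitutes $s^2/2 = t(\lambda+1)$ to rewrite it as $\frac{\sqrt{\lambda+1}}{\lambda}\int_0^\infty \eee^{-\lambda t}\,\frac{\dd}{\dd t}\bigl((2\Phi(\sqrt{2t})-1)^n\bigr)\,\dd t$, then invokes the known inverse Laplace transform of $\frac{\sqrt{\lambda+1}}{\lambda}$, namely $F_{\xi^2/2}+f_{\xi^2/2}$, and the convolution theorem to read off the answer. You instead \emph{verify} the stated answer by computing its Laplace transform forward: the reduction $\int_0^\infty (F+F')\eee^{-\lambda u}\,\dd u = \frac{1+\lambda}{\lambda}\E[\eee^{-\lambda W}]$, the independence factorization $\E[\eee^{-\lambda W}] = \E[\eee^{-\lambda L_n^2/2}]\,(1+\lambda)^{-1/2}$, and the substitution $t=\sigma\ell$ with $\sigma^2=1+\lambda$ (the mirror image of the paper's substitution) recovering $2n\,\gcu_{n-1}(\lambda+1)$ by conditioning on $\xi_n$. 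Your route is arguably cleaner as a proof of the stated theorem: it needs only the chi-square transform $(1+\lambda)^{-1/2}$ and Laplace uniqueness, not a table entry for the inverse of $\sqrt{\lambda+1}/\lambda$, and the factor $(1+\lambda)$ collapsing against the chi-square transform neatly explains the otherwise mysterious $F+F'$ structure. What it gives up is discovery --- the paper's argument derives the formula rather than checking a guess --- and it shifts the burden onto the uniqueness step, where you correctly note that pointwise equality for \emph{all} $u>0$ (as the theorem asserts) requires continuity of both sides; you assert continuity of $u\mapsto \fsym_{n,2}(\sqrt{2u})$ without proof (it follows, e.g., because a fixed point lies on the boundary of $\Psym_{n,2}$ with probability zero, plus dominated convergence), which is a minor gap the paper shares, since its inversion argument implicitly needs the same fact.
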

\begin{proof}
The right-hand side of~\eqref{eq:rhs_laplace_transf} can be written as
\begin{align*}
	\frac{2 n}{\lambda} \P\left( \frac{1}{\sqrt{\lambda +1}} \xi_{n}\ge \max_{1\le j\le {n-1}}|\xi_j| \right)
	=
	\frac{2 n}{\lambda} \frac{1}{\sqrt{2 \pi}} \int_{0}^{\infty} \eee^{-\frac{s^2}{2}} \left(2\Phi\left( \frac{s}{\sqrt{\lambda+1}} \right)-1\right)^{n-1}\dd s,
\end{align*}
where $\Phi$ is the cumulative distribution function of the standard normal distribution $\Normal(0,1)$.
Substituting $\frac{s^2}{2}= t(\lambda+1)$, so that $s=\sqrt{2t (\lambda+1)}$ and $\dd s = \sqrt{\frac{\lambda+1}{2t}} \dd t$, we rewrite the right-hand side as
\begin{align}
\lefteqn{\frac{2 n}{\lambda} \frac{1}{\sqrt{2 \pi}} \int_{0}^{\infty} \eee^{-\lambda t} \eee^{-t} (2\Phi(\sqrt{2t})-1)^{n-1} \sqrt{\frac{\lambda+1}{2t}} \dd t}\notag
\\
	&=
	 \frac{\sqrt{\lambda+1}}{\lambda} \int_{0}^{\infty} \eee^{-\lambda t} n \frac{2 \eee^{-t}}{\sqrt{2\pi}} (2\Phi(\sqrt{2t})-1)^{n-1} \frac{1}{\sqrt{2t}} \dd t \notag\\
	&=
	\frac{\sqrt{\lambda+1}}{\lambda} \int_{0}^{\infty} \eee^{-\lambda t} \frac{\dd}{\dd t} \left((2\Phi(\sqrt{2t})-1)^n\right) \dd t. \label{eq:Laplace_trafo}
\end{align}
Now, $(2\Phi(\sqrt{2t})-1)^n$ is the distribution function of $\frac 12 L_n^2$, where we recall that $L_n:= \max\{|\xi_1|,\ldots,|\xi_n|\}$ and the $\xi_i$'s are independent standard normal random variables.

On the other hand, the inverse Laplace transform of $\frac{\sqrt{\lambda +1}}{\lambda}$ is
$$
2\Phi(\sqrt{2t})- 1 + \frac{\eee^{-t}}{\sqrt{\pi t}} = F_{\frac{\xi^2}{2}}(t)+f_{\frac{\xi^2}{2}}(t) ,
$$
where $F_{\frac{\xi^2}{2}}$ is the distribution function and $f_{\frac{\xi^2}{2}}$ the density of $\frac{\xi^2}{2}$, with $\xi$ being standard normal.

Thus the inverse Laplace transform of \eqref{eq:Laplace_trafo} is the convolution of $F_{\frac{\xi^2}{2}}+f_{\frac{\xi^2}{2}}$ and $f_{\frac 12 L_n^2}$, where $f_{\frac 12 L_n^2}$ is the density function of $\frac 12 L_n^2$. It follows from~\eqref{eq:rhs_laplace_transf} that
\begin{align*}
\fsym_{n,2} (\sqrt{2u})
&=
\int_0^u F_{\frac{\xi^2}{2}}(t) f_{\frac 12 L_n^2} (u-t) \dd t +
+
\int_0^u f_{\frac{\xi^2}{2}}(t) f_{\frac 12 L_n^2} (u-t) \dd t\\
&=
\P\left(\frac {L_n^2 + \xi^2} 2 \leq u\right) +  \frac{\dd}{\dd u} \P\left(\frac {L_n^2 + \xi^2} 2 \leq u\right),
\end{align*}
which completes the proof.
\end{proof}

\section{Random sections of regular polytopes}\label{sec:random_sections}
In \cite{lonke} Lonke investigated the asymptotics of the  expected number of $j$-faces of the intersection of the $n$-cube $[-1,1]^n$ and a random $k$-dimensional linear subspace of $\R^{n}$ chosen uniformly from the Grassmannian $Gr(k,\R^n)$, i.e.\ the set of all $k$-dimensional linear subspaces of $\R^n$.
With the methods we used above we can give explicit expressions for these expected numbers not only for cubes, but also for crosspolytopes and simplices.
\\
In Section \ref{subsec_random_section_cones} we will determine the probabilities that fixed faces of our polytopes get intersected by the random linear space. This will be an auxiliary result for Section \ref{subsec:random_sections_polytopes}, in which we will state the expressions we are interested in. 

Let $L$ a random $(n-l)$-dimensional linear subspace of $\R^n$ having the uniform distribution on $\Gr(n-l,\R^n)$. Here, $l \in \{1,\ldots, n-1\}$ is the codimension of $L$.
Let the random variable $\phi^P(j,n-l,n)$ be the number of $j$-faces of the intersection of $L$ and an $n$-dimensional convex polytope $P\subset \R^n$ which contains the origin in its interior.

\begin{proposition} \label{Prop:Random_sections_first_subsection}
For $n\in \{2,3,\ldots\}$, $l \in \{1,\ldots, n-1\}$ and $j\in \{0, \ldots , n-l-1\}$.
\begin{align}
	\phip(j,n-l,n)= \sum_{B\in \cF_{j+l}(P)} \indi{B \cap L \neq \emptyset} \text{ almost surely.} \label{eq:random_sections_first_identity}
\end{align}
\end{proposition}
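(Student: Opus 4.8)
The plan is to show that, almost surely, the map $B \mapsto B \cap L$ is a bijection between the set of those $(j+l)$-faces $B$ of $P$ that meet $L$ and the set of $j$-faces of the section $Q := P \cap L$. Once this is established, comparing the cardinalities of the two sides gives \eqref{eq:random_sections_first_identity}, because $\phip(j,n-l,n)$ is by definition the number of $j$-faces of $Q$, while the right-hand side is exactly the number of $(j+l)$-faces of $P$ that meet $L$ (note $j+l\in\{l,\dots,n-1\}$ and $j\in\{0,\dots,n-l-1\}$, so on both sides we only count proper faces).

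First I would record the standard structural fact about sections. Writing $P = \{x : \langle a_i,x\rangle \le b_i,\ i\in I\}$ with all $b_i > 0$ (possible since $0 \in \interior P$), the section is $Q = \{x \in L : \langle a_i, x\rangle \le b_i\ \forall i\}$, an $(n-l)$-dimensional polytope, and every face of $Q$ has the form $F \cap L$ for some face $F$ of $P$; conversely $F \cap L$ is a face of $Q$ whenever it is non-empty. Thus $F \mapsto F\cap L$ maps the faces of $P$ meeting $L$ onto the faces of $Q$, and for a face $G$ of $Q$ the smallest face $\Phi(G)$ of $P$ containing it satisfies $G = \Phi(G)\cap L$ and $\relint G \subseteq \relint \Phi(G)$. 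The bijection then reduces to the following general position statement, which I claim holds almost surely for every face $F$ of $P$: (a) if $\dim F < l$ then $F\cap L = \emptyset$; and (b) if $\dim F \ge l$ and $F\cap L \ne\emptyset$, then $L$ meets $\relint F$, so that $\relint(F\cap L) = (\relint F) \cap L$ and $\dim(F\cap L) = \dim F - l$. Granting (a)--(b): if $B\in\cF_{j+l}(P)$ meets $L$, then $B\cap L$ is a face of $Q$ of dimension $j$; conversely each $G\in\cF_j(Q)$ equals $\Phi(G)\cap L$ with $\Phi(G)$ meeting $\relint$, so by (a)--(b) $\dim\Phi(G) = j+l$; and injectivity holds because $\relint G = (\relint \Phi(G))\cap L$ forces $\Phi(G)$ to be the unique face of $P$ whose relative interior meets $\relint G$. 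As $P$ has finitely many faces, it suffices to verify (a)--(b) for each fixed $F$ outside an event of probability zero.

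For (a) and the upper bound $\dim(F\cap L)\le \dim F-l$ I would invoke transversality of a random subspace. Writing $\aff F = a + V$ with $V = \lin(F-F)$, a uniform $L$ satisfies $\dim(V\cap L) = \max(\dim F - l, 0)$ a.s. When $\dim F < l$ this gives $\dim(V+L) = \dim F + (n-l) < n$ a.s., so the fixed point $a$ lies outside the random proper subspace $V+L$ with probability one; together with the fact that $0\notin F$ (a proper face lies in $\partial P$, while $0\in\interior P$), this yields $\aff F\cap L\in\{\emptyset,\{0\}\}$ and hence $F\cap L=\emptyset$, proving (a). When $\dim F\ge l$ the same computation gives $\dim(\aff F\cap L)=\dim F-l$ a.s., which bounds $\dim(F\cap L)$ from above.

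The crux, and the step I expect to be the main obstacle, is ruling out tangential intersections in (b): excluding, for a face $F$ of dimension $k\ge l$, the event that $L$ meets $F$ only in its relative boundary. Here I would use that, by the $O(n)$-invariance of $L$ restricted to the subgroup fixing $V$ setwise, the intersection direction $V\cap L$ is uniformly (invariantly) distributed on the Grassmannian $\Gr(k-l, V)$. If $L$ met $F$ only in $\partial(F)\setminus\relint F$, then picking any $y\in L\cap F$, lying in $\relint G$ for a proper face $G\subsetneq F$ with $\dim G\ge l$ (by (a) applied to $G$), the condition $M\cap\interior F=\emptyset$ (where $M=L\cap\aff F$) forces the direction $V\cap L = M-y$ to be disjoint from the interior of the full-dimensional tangent cone $T_G(F)$. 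Such subspaces form a null set in $\Gr(k-l,V)$: disjointness from a full-dimensional open convex cone forces $V\cap L$ into a hyperplane $\ker\psi$ with $\psi$ ranging over the normal cone $N_G(F)=T_G(F)^\circ$, which has dimension $\dim F-\dim G\le k-l$, so the resulting family of $(k-l)$-planes has positive codimension in $\Gr(k-l,V)$. Summing these probability-zero events over the finitely many pairs of faces $G\subsetneq F$ completes (b), and with it the bijection and the proposition.
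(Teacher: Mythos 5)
Your proof is correct, but it takes a genuinely different route from the paper's. The paper first isolates a \emph{deterministic} notion of general position and shows (Proposition~\ref{prop:general_position_almost_surely}, via Goodey--Schneider) that $L$ satisfies it almost surely; everything after that is deterministic: Proposition~\ref{prop:random_sections_first_identity} proves injectivity, well-definedness and surjectivity of $B\mapsto B\cap S$ for \emph{any fixed} subspace $S$ in general position, the hardest ingredient being Lemma~\ref{lem:random_sections_S_cap_intQ_not_empty} (a subspace in general position that meets a full-dimensional polyhedron must meet its interior), established by the decomposition $\R^n=V_0+V_1+V_2$ and a projection of an interior point of the tangent cone onto $V_2\subset S$; surjectivity is then obtained by extending a supporting hyperplane $H_0\subset S$ of the face of $S\cap P$ to a supporting hyperplane of $P$ via the separation Lemma~\ref{lem:separation_thm}, using $0\in\interior(P)$. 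You instead keep the randomness throughout: you import the standard facial structure of sections of polyhedra (every face of $P\cap L$ is $F\cap L$ for a face $F$ of $P$, and the minimal face $\Phi(G)$ satisfies $G=\Phi(G)\cap L$ with $\relint G\subseteq\relint \Phi(G)$), which replaces the paper's statements (2)--(3) and its separation lemma essentially for free, and you kill tangential intersections by a direct null-set computation: $V\cap L$ is uniform on $\Gr(k-l,V)$ by invariance, and tangency at a point of $\relint G$ forces $V\cap L\subseteq\ker\psi$ for some nonzero $\psi$ in the normal cone $N_G(F)$, whose dimension $\dim F-\dim G\le k-l$ (here your claim (a) is exactly what rules out $\dim G<l$, so no circularity) makes this a positive-codimension, hence null, condition; I checked the separation step producing $\psi$, the uniformity of $V\cap L$, and the orthocomplement dimension count ($W^{\perp}\in\Gr(l,V)$ uniform, $l+\dim\lin N_G(F)\le k$), and they all hold. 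Your normal-cone count is in effect the probabilistic shadow of the paper's deterministic tangent-cone argument in Lemma~\ref{lem:random_sections_S_cap_intQ_not_empty}. What each approach buys: the paper's yields a reusable deterministic statement applicable to any concrete subspace one can verify to be in general position, at the cost of the more delicate $V_0+V_1+V_2$ argument; yours is shorter on the tangency step and avoids constructing supporting hyperplanes, but its conclusion is intrinsically almost-sure and it leans on standard (here unproved, though entirely classical) facts about faces of $P\cap L$, which the paper in effect reproves. One cosmetic slip: in your claim (a), since $0\in\interior(P)$ forces $0\notin\aff F$ for a proper face $F$, the event $a\notin V+L$ already gives $\aff F\cap L=\emptyset$ outright, so the alternative ``$\aff F\cap L=\{0\}$'' never occurs and can be dropped.
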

In other words, almost surely the number of $j$-faces of the intersection $L\cap P$ is equal to the number of $(j+l)$-faces of $P$ that have non-empty intersection with $L$.
Equation~\eqref{eq:random_sections_first_identity} looks natural because it is reasonable that with probability $1$ the $j$-faces of $L \cap P$ can be obtained as the intersections of $L$ and the $(j+l)$-dimensional faces of $P$.
The proof of this fact is surprisingly difficult and will be given in Section~\ref{sec:proof_intersection}.
In \cite[(3.1)]{grunbaum} a similar result is stated (without proof) for random projections instead of random sections.

An immediate consequence for regular polytopes $P$ is
\begin{align}
\E \phip(j,n-l,n)=\E \sum_{B\in\cF_{j+l}(P)} \indi{B \cap L \neq \emptyset}
=
\# \cF_{j+l}(P) \cdot \P (B\cap L \neq \emptyset), \label{eq:random_sections_first_identity_exp}
\end{align}
which is true for every $(j+l)$-face $B$.

It should be stressed that, as already observed by Lonke~\cite{lonke}, there is a duality between the number of faces in random intersections as above and the number of faces in random projections of a dual polytope. Namely, the expected number of $j$-faces of a random $k$-dimensional projection of an $n$-dimensional polytope $P$ containing the origin in its interior coincides with the expected number of $(k-j-1)$-faces of the intersection of the dual polytope $P^\circ$ with a random $k$-dimensional linear subspace passing through the origin. The expected face numbers of a random projection of a polytope has been expressed through its internal and external angles in the work of Affentranger and Schneider~\cite{affentranger_schneider}. Asymptotic questions were studied in~\cite{vershik_sporyshev,boroczky,donoho_tanner_ProcAcadUSA,donoho_tanner_JAMS,donoho_tanner_DiscrCompGeom}.

\subsection{Probabilities that fixed faces get intersected} \label{subsec_random_section_cones}
Our purpose is to apply Equation \eqref{eq:random_sections_first_identity_exp} to random sections of the three kinds of regular polytopes we are analysing. Essentially, we need to derive  expressions for the quantity $\P(B\cap L \neq \emptyset)$ for a fixed face of dimension $d=j+l$, which is the second factor in \eqref{eq:random_sections_first_identity_exp}. These expressions are given in the following three lemmas.
For $l\in\{1,\ldots,n-1\}$ we recall that  $L$ is a linear subspace of $\R^n$ that has codimension $l$ and is chosen from the set of all such subspaces uniformly at random.
\begin{proposition} \label{prop:Prob_S_cap_F_notempty_Cube}
	For $d\in \{l,\ldots, n-1\}$ the probability of the event that $L$ intersects a fixed $d$-face $B$ of the cube $[-1,1]^n$ is
	$$
	\P(L\cap B \neq \emptyset )
	=
	2 \left(
	\upsilon_{l+1} (\Ccu_d(n-d)) +\upsilon_{l+3}(\Ccu_d(n-d))+\ldots
	\right).
	$$
\end{proposition}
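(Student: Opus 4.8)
The plan is to reduce the affine incidence event $\{L\cap B\neq\emptyset\}$ to a conic nontrivial-intersection event and then apply the conic Crofton (Grassmann-angle) formula. Because the cube $[-1,1]^n$ is invariant under signed coordinate permutations, which are isometries, and because the law of $L$ is rotation invariant while the conic intrinsic volumes are isometry invariants, the probability $\P(L\cap B\neq\emptyset)$ and the right-hand side depend on $B$ only through its being a $d$-face. It therefore suffices to treat the representative face
$$
B=\{(y_1,\ldots,y_d,1,\ldots,1):y_1,\ldots,y_d\in[-1,1]\},
$$
obtained by freezing the last $n-d$ coordinates at $1$. Since $B$ does not contain the origin and $L$ is a \emph{linear} subspace, I claim the deterministic equivalence $L\cap B\neq\emptyset\iff\pos(B)\cap L\neq\{0\}$. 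Indeed, any $p\in L\cap B$ is a nonzero point of $\pos(B)\cap L$; conversely any nonzero $q\in\pos(B)\cap L$ equals $sb$ with $s>0$ and $b\in B$, and then $b=q/s\in L\cap B$ because $L$ is linear. This replaces the problem by computing $\P(C\cap L\neq\{0\})$ for the fixed cone $C:=\pos(B)$.

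The next step is to identify $C$ up to isometry. Its spanning vectors are $w_\eps:=\sum_{i=1}^d\eps_i\eee_i+\sum_{j=d+1}^n\eee_j$ for $\eps\in\{-1,1\}^d$, and a direct computation gives
$$
\langle w_\eps,w_\eta\rangle=(n-d)+\sum_{i=1}^d\eps_i\eta_i=(n-d)+\langle\eps,\eta\rangle .
$$
Comparing with the scalar-product characterization \eqref{eq:Ccu_scalar_products} of the generators of $\Ccu_d(\sigma^2)$, namely $\langle v_\eps,v_\eta\rangle=\sigma^2+\langle\eps,\eta\rangle$, we see that the generators of $C$ satisfy exactly these relations with $\sigma^2=n-d$. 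By the criterion recorded at the end of Section~\ref{sec:defnition_cones}, $C$ is isometric to $\Ccu_d(n-d)$; the parameter $n-d$ appears because the frozen block $(1,\ldots,1)\in\R^{n-d}$ has squared length $n-d$, so the apex (height) direction of the cone is stretched by $\sqrt{n-d}$ rather than remaining unit length. The same computation works verbatim for an arbitrary $d$-face (any choice of frozen coordinates and signs), so $\pos(B)\cong\Ccu_d(n-d)$ for every $d$-face $B$, and consequently $\upsilon_k(C)=\upsilon_k(\Ccu_d(n-d))$ for all $k$.

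Finally I would invoke the conic Crofton/Grassmann-angle formula: for a closed convex cone $C\subset\R^n$ that is not a linear subspace and a uniformly random $(n-l)$-dimensional subspace $L$,
$$
\P(C\cap L\neq\{0\})=2\big(\upsilon_{l+1}(C)+\upsilon_{l+3}(C)+\ldots\big),
$$
see, e.g., \cite{SW08} or \cite{amelunxen_comb}. Since $\pos(B)$ is pointed and hence not a subspace, applying this with $C=\pos(B)$ and substituting $\upsilon_k(\pos(B))=\upsilon_k(\Ccu_d(n-d))$ yields the claimed identity. The individual steps are short, and the part demanding the most care is the geometric bookkeeping in the second paragraph: recognizing that freezing $n-d$ coordinates rescales the apex direction and produces $\sigma^2=n-d$ instead of $\sigma^2=1$, and ensuring that the cited Crofton formula is applied with $l$ equal to the ambient codimension of $L$ (the formula otherwise being independent of the ambient dimension $n$, as one checks on low-dimensional wedges).
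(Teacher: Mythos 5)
Your proof is correct and follows essentially the same route as the paper's: reduce $\{L\cap B\neq\emptyset\}$ to the conic event $\{\pos(B)\cap L\neq\{0\}\}$, identify $\pos(B)\cong\Ccu_d(n-d)$, and apply the conic Crofton formula of Theorem~\ref{thm:conic_crofton}. The only cosmetic differences are that you establish the isometry by matching the Gram matrix of the generators against \eqref{eq:Ccr_scalar_products}-style relation \eqref{eq:Ccu_scalar_products} (the very criterion the paper endorses at the end of Section~\ref{sec:defnition_cones}), whereas the paper reads off the parameter from $\sigma=\dist(\{0\},B)=\sqrt{n-d}$, and that you spell out the cone-reduction equivalence and the pointedness of $\pos(B)$, which the paper leaves implicit.
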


\begin{proposition} \label{prop:Prob_S_cap_F_notempty_crosspoly}
	For $d\in \{l,\ldots, n-1\}$ the probability of the event that $L$ intersects a fixed $d$-face $B$ of the crosspolytope $\conv \{\eee_1, -\eee_1,\eee_2,-\eee_2,\ldots, \eee_n,-\eee_n\}$ is
	$$
	\P(L\cap B \neq \emptyset )
	=
	2^{-d} \left(
	\binom{d}{l} + \binom{d}{l+1} + \ldots
	\right).
	$$
\end{proposition}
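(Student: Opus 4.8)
The plan is to reduce the statement to a classical absorption probability for i.i.d.\ Gaussian points. First I would exploit the symmetry of the crosspolytope. Every $d$-face of $\conv\{\pm\eee_1,\ldots,\pm\eee_n\}$ has the form $\conv\{\eps_0\eee_{i_0},\ldots,\eps_d\eee_{i_d}\}$ for distinct indices $i_0,\ldots,i_d$ and signs $\eps_0,\ldots,\eps_d\in\{-1,+1\}$, and any two such faces are carried into one another by a signed coordinate permutation, which is an orthogonal map of $\R^n$. Since the distribution of $L$ is invariant under $O(n)$, the probability $\P(L\cap B\neq\emptyset)$ is the same for all $d$-faces $B$, so I may assume $B=\conv\{\eee_1,\ldots,\eee_{d+1}\}$.

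Next I would realize the random subspace as the kernel of a Gaussian matrix. Let $A$ be an $l\times n$ matrix with i.i.d.\ standard normal entries; almost surely $\rank A=l$, so $L:=\Ker A$ has dimension $n-l$, and by rotational invariance of the Gaussian law $L$ is uniform on $\Gr(n-l,\R^n)$. Denote by $a_k:=A\eee_k\in\R^l$ the columns of $A$, so that $a_1,\ldots,a_{d+1}$ are i.i.d.\ standard Gaussian vectors in $\R^l$. A point $x=\sum_{k=1}^{d+1}\lambda_k\eee_k$ with $\lambda_k\geq 0$ and $\sum_k\lambda_k=1$ lies in $B\cap L$ exactly when $\sum_{k=1}^{d+1}\lambda_k a_k=0$. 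Hence $L\cap B\neq\emptyset$ if and only if the origin is contained in $\conv\{a_1,\ldots,a_{d+1}\}$.

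It then remains to evaluate $\P(0\in\conv\{a_1,\ldots,a_{d+1}\})$. This is governed by Wendel's theorem: for $d+1$ points drawn i.i.d.\ from a distribution on $\R^l$ that is symmetric about the origin and almost surely in general position, the origin lies outside their convex hull with probability $2^{-d}\sum_{k=0}^{l-1}\binom{d}{k}$. The standard Gaussian distribution satisfies both hypotheses, so
$$
\P(L\cap B\neq\emptyset)=1-2^{-d}\sum_{k=0}^{l-1}\binom{d}{k}=2^{-d}\sum_{k=l}^{d}\binom{d}{k},
$$
using $\sum_{k=0}^{d}\binom{d}{k}=2^d$ for the last step; this is the claimed formula, as $\binom{d}{k}=0$ for $k>d$.

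The only substantive step is the reformulation in the second paragraph: once $L$ is presented as $\Ker A$, the geometric intersection problem collapses into the combinatorial question of whether $d+1$ Gaussian points capture the origin, which Wendel's theorem settles, after which only a binomial identity remains. The remaining verifications---that $\rank A=l$ almost surely, so that $\Ker A$ is uniform on the correct Grassmannian, and that Gaussian vectors are almost surely in the general position required by Wendel's theorem---are routine, each exceptional configuration forming a set of measure zero.
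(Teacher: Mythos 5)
Your proof is correct, and it reaches the formula by a genuinely different route than the paper. Both arguments begin with the same symmetry reduction to $B=\conv\{\eee_1,\ldots,\eee_{d+1}\}$, but from there the paper conifies: it notes that $L\cap B\neq\emptyset$ iff $L$ meets the cone $\pos(B)\cong\Cs_{d+1}(0)$ nontrivially, applies the conic Crofton formula (Theorem \ref{thm:conic_crofton}) to get $\P(L\cap B\neq\emptyset)=2\left(\upsilon_{l+1}(\Cs_{d+1}(0))+\upsilon_{l+3}(\Cs_{d+1}(0))+\ldots\right)$, inserts the known conic intrinsic volumes $\upsilon_k(\Cs_{d+1}(0))=\binom{d+1}{k}2^{-(d+1)}$ from \cite{kabluchko2017absorption}, and finishes with the resummation $\binom{d+1}{l+1}+\binom{d+1}{l+3}+\ldots=\binom{d}{l}+\binom{d}{l+1}+\ldots$. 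You instead realize $L=\Ker A$ for an $l\times n$ Gaussian matrix, translate $L\cap B\neq\emptyset$ into $0\in\conv\{a_1,\ldots,a_{d+1}\}$ for i.i.d.\ standard Gaussian points in $\R^l$, and invoke Wendel's theorem, which yields $2^{-d}\sum_{k\geq l}\binom{d}{k}$ in one stroke, with no intrinsic volumes and no binomial resummation. These are really two faces of the same computation: Wendel's theorem is exactly the statement that the Grassmann angles of the nonnegative orthant are binomial tail sums, and your kernel representation of $L$ is the standard mechanism behind Crofton-type identities for this cone. What your route buys is an elementary, self-contained proof of this one proposition; what the paper's route buys is uniformity, since the identical Crofton template also handles Propositions \ref{prop:Prob_S_cap_F_notempty_Cube} and \ref{prop:Prob_S_cap_F_notempty_simplex}, where the relevant cones $\Ccu_d(n-d)$ and $\Cs_{d+1}\left(-\frac{1}{n+1}\right)$ are not orthants and no Wendel-type closed form exists. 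Your side verifications are correctly identified as routine: $\Ker A$ is uniform on $\Gr(n-l,\R^n)$ by rotational invariance of the Gaussian law together with $\rank A=l$ a.s., and the symmetry and general-position hypotheses of Wendel's theorem hold a.s.\ for Gaussian points, so that the event that all points lie in a closed half-space agrees with $\{0\notin\conv\{a_1,\ldots,a_{d+1}\}\}$ up to a null set.
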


\begin{proposition} \label{prop:Prob_S_cap_F_notempty_simplex}
	Let $P_{n} \subset \R^n$ be an $n$-dimensional regular simplex centred at the origin, whose edges have unit length.
	For $d\in \{l,\ldots, n-1\}$ the probability of the event that $L$ intersects a fixed $d$-face $B$ of $P_{n}$ is
	$$
	\P(L\cap F \neq \emptyset )
	=
	2\left(\upsilon_{l+1} \left(\Cs_{d+1} \left( -\frac{1}{n+1} \right)\right)+\upsilon_{l+3}\left(\Cs_{d+1} \left( -\frac{1}{n+1} \right)\right)+\ldots \right).
	$$
\end{proposition}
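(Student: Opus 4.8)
The plan is to follow the same strategy as in Propositions~\ref{prop:Prob_S_cap_F_notempty_Cube} and~\ref{prop:Prob_S_cap_F_notempty_crosspoly}: reduce the event $\{L\cap B\neq\emptyset\}$ to a statement about the cone spanned by $B$, evaluate the resulting hitting probability by a Grassmann angle formula, and finally identify that cone with $\Cs_{d+1}(-1/(n+1))$ through its Gram matrix. Since $P_{n}$ is centred at the origin, the origin lies in its interior, and hence $0\notin B$. Writing $C:=\pos(B)$ for the cone spanned by $B$, which is pointed because $0\notin B$ and $B$ is bounded, one checks directly that $L\cap B\neq\emptyset$ if and only if $L\cap C\neq\{0\}$: any $x\in L\cap B$ is a nonzero point of $L\cap C$, while any nonzero $y\in L\cap C$ can be written as $y=\lambda b$ with $\lambda>0$ and $b\in B$, so that $b=y/\lambda\in L\cap B$ because $L$ is a linear subspace.

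To evaluate $\P(L\cap C\neq\{0\})$ I would invoke the conic version of Gr\"unbaum's Grassmann angle formula (see, e.g., \cite{SW08} or \cite{amelunxen_comb}): for a pointed polyhedral cone $C$ and a uniformly random linear subspace $L$ of codimension $l$,
$$
\P(L\cap C\neq\{0\}) = 2\big(\upsilon_{l+1}(C)+\upsilon_{l+3}(C)+\ldots\big).
$$
This is precisely the identity already used for the cube and the crosspolytope; the only ingredient specific to the simplex is the determination of the isometry type of $C=\pos(B)$.

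For that identification, let $w_0,\ldots,w_n$ denote the vertices of $P_n$. The centring condition gives $w_0+\ldots+w_n=0$, and by regularity $|w_i|^2$ and $\langle w_i,w_j\rangle$ (for $i\neq j$) are constant. Squaring $\sum_i w_i=0$ and imposing the unit edge length $|w_i-w_j|=1$ yields, after a short computation,
$$
\langle w_i,w_j\rangle = \tfrac12\,\delta_{i,j}-\tfrac{1}{2(n+1)},\qquad i,j\in\{0,\ldots,n\}.
$$
A $d$-face $B$ is the convex hull of some $d+1$ vertices, say $w_{i_0},\ldots,w_{i_d}$, and $\pos(B)=\pos\{w_{i_0},\ldots,w_{i_d}\}$. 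Rescaling the spanning vectors by $\sqrt2$, which does not change their positive hull, produces vectors $u_a:=\sqrt2\,w_{i_a}$ with $\langle u_a,u_b\rangle = \delta_{a,b}-\tfrac{1}{n+1}$, i.e.\ exactly the scalar products~\eqref{eq:Cs_scalar_products} with $r=-1/(n+1)$. By the criterion recorded after~\eqref{eq:Cs_scalar_products}, $C$ is therefore isometric to $\Cs_{d+1}(-1/(n+1))$; note that $r=-1/(n+1)>-1/(d+1)$ holds because $d<n$, so this cone is well defined.

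Combining the two displays gives
$$
\P(L\cap B\neq\emptyset)=2\Big(\upsilon_{l+1}\big(\Cs_{d+1}(-\tfrac{1}{n+1})\big)+\upsilon_{l+3}\big(\Cs_{d+1}(-\tfrac{1}{n+1})\big)+\ldots\Big),
$$
which is the asserted formula. The genuinely substantial step is the Grassmann angle identity; once it is available (for instance from the conic kinematic formula, exactly as in the cube and crosspolytope cases), the remainder is the routine Gram-matrix computation above, whose only delicate point is verifying that the resulting parameter $r=-1/(n+1)$ lies in the admissible range $r>-1/(d+1)$.
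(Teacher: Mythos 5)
Your proposal is correct and follows essentially the same route as the paper: reduce $\{L\cap B\neq\emptyset\}$ to $\{L\cap\pos(B)\neq\{0\}\}$, apply the conic Crofton formula (Theorem~\ref{thm:conic_crofton}), and identify $\pos(B)$ with $\Cs_{d+1}(-\tfrac{1}{n+1})$ via the Gram matrix criterion~\eqref{eq:Cs_scalar_products}; the paper does the same computation using the standard simplex $\conv\{\eee_1,\ldots,\eee_{n+1}\}\subset\R^{n+1}$ centred at $m$, whereas you work with the centred unit-edge simplex and rescale by $\sqrt{2}$, which is an equivalent normalization since positive hulls are scale-invariant. Your explicit verifications of the equivalence $L\cap B\neq\emptyset\iff L\cap C\neq\{0\}$ and of the admissibility $-\tfrac{1}{n+1}>-\tfrac{1}{d+1}$ are welcome details that the paper leaves implicit.
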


\subsection{Random sections of regular polytopes} \label{subsec:random_sections_polytopes}
We are now in position to state formulas for the expected number of faces of regular polytopes intersected by a random linear subspace.
\begin{theorem} \label{thm:random_sections_polytopes}
	Fix integers $n > k >j\geq 0$ and let $L$ be a random linear subspace of $\R^n$ that has dimension $k$ and is chosen from the set of all such subspaces uniformly. Denote its codimension by $l:=\codim(L)=n-k$.
	Let $\phicu(j,k,n)$, $\phicr(j,k,n)$ and $\phis(j,k,n)$ be the number of $j$-faces of the intersection of $L$ and respectively the cube $[-1,1]^n$, the crosspolytope $\conv\{\eee_1, -\eee_1,\eee_2,-\eee_2,\ldots, \eee_n,-\eee_n\}$ or the $n$-dimensional simplex in $\R^n$ centred at the origin.
	The expectations of these random variables are given by:
	\begin{align*}
		\E \phicu(j,k,n)&= 2^{k-j+1} \binom{n}{n-k+j} \cdot \\
		&\quad \cdot
		\left(
		\upsilon_{n-k+1} (\Ccu_{n-k+j}(k-j)) +\upsilon_{n-k+3}(\Ccu_{n-k+j}(k-j))+\ldots
		\right),
		\\
		\E\phicr(j,n-l,n)
		&=
		2  \binom{n}{j+l+1} \left(
		\binom{j+l}{l} + \binom{j+l}{l+1} + \ldots
		\right),
		\\
		\E\phis(j,n-l,n)&=
		2\binom{n+1}{j+l+1} \cdot \\
		&\cdot \left(\upsilon_{l+1} \left(\Cs_{j+l+1} \left( -\frac{1}{n+1} \right)\right)+\upsilon_{l+3}\left(\Cs_{j+l+1} \left( -\frac{1}{n+1} \right)\right)+\ldots \right).
	\end{align*}
\end{theorem}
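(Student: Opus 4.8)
The plan is to combine the face-counting identity~\eqref{eq:random_sections_first_identity_exp} with the three probability formulas from Propositions~\ref{prop:Prob_S_cap_F_notempty_Cube},~\ref{prop:Prob_S_cap_F_notempty_crosspoly}, and~\ref{prop:Prob_S_cap_F_notempty_simplex}. Since each of the three polytopes is invariant under a symmetry group acting transitively on its faces of each fixed dimension, the probability $\P(B\cap L\neq\emptyset)$ does not depend on which $(j+l)$-face $B$ is chosen; this is exactly what is encoded in~\eqref{eq:random_sections_first_identity_exp}, namely
$$
\E\phip(j,n-l,n) = \#\cF_{j+l}(P)\cdot \P(B\cap L\neq\emptyset).
$$
Thus it remains only to insert the correct face count and the correct probability for each family. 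Throughout I would set $d:=j+l=n-k+j$, so that $n-d=k-j$ and $l+1=n-k+1$, and keep these substitutions straight.

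First I would record the number of $(j+l)$-faces of each polytope. A $d$-face of the cube $[-1,1]^n$ is obtained by letting $d$ coordinates range freely over $[-1,1]$ and fixing the remaining $n-d$ at $\pm1$, so $\#\cF_d([-1,1]^n)=2^{n-d}\binom{n}{d}=2^{k-j}\binom{n}{n-k+j}$. A $d$-face of the crosspolytope $\conv\{\pm\eee_1,\ldots,\pm\eee_n\}$ is the convex hull of $d+1$ vertices lying on $d+1$ distinct axes with an independent sign choice on each axis, giving $\#\cF_d=2^{d+1}\binom{n}{d+1}=2^{j+l+1}\binom{n}{j+l+1}$. Finally, a $d$-face of the $n$-simplex (which has $n+1$ vertices) is the convex hull of any $d+1$ of the vertices, so $\#\cF_d=\binom{n+1}{d+1}=\binom{n+1}{j+l+1}$.

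Then I would substitute. For the cube, Proposition~\ref{prop:Prob_S_cap_F_notempty_Cube} gives $\P(B\cap L\neq\emptyset)=2\big(\upsilon_{l+1}(\Ccu_d(n-d))+\upsilon_{l+3}(\Ccu_d(n-d))+\ldots\big)$; multiplying by $2^{k-j}\binom{n}{n-k+j}$ and rewriting $d=n-k+j$, $n-d=k-j$, $l+1=n-k+1$ produces the prefactor $2^{k-j+1}\binom{n}{n-k+j}$ and the claimed sum. For the crosspolytope, Proposition~\ref{prop:Prob_S_cap_F_notempty_crosspoly} gives $\P(B\cap L\neq\emptyset)=2^{-d}\big(\binom{d}{l}+\binom{d}{l+1}+\ldots\big)$, and the factor $2^{d+1}$ from the face count cancels $2^{-d}$ up to a single factor of $2$, yielding $2\binom{n}{j+l+1}\big(\binom{j+l}{l}+\binom{j+l}{l+1}+\ldots\big)$. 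For the simplex, Proposition~\ref{prop:Prob_S_cap_F_notempty_simplex} supplies $2\big(\upsilon_{l+1}(\Cs_{d+1}(-\tfrac1{n+1}))+\ldots\big)$, which after multiplication by $\binom{n+1}{j+l+1}$ gives the third formula verbatim.

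There is no genuine analytic obstacle at this stage: each of the three expressions is an immediate consequence of results already established, and the only point requiring care is the index bookkeeping. Specifically, I must keep the three interlocking quantities $k$, $l=n-k$, and $d=j+l=n-k+j$ consistent between the face counts and the cone parameters, and must match the parity of the truncated sums $\upsilon_{l+1}+\upsilon_{l+3}+\cdots$ (respectively $\binom{d}{l}+\binom{d}{l+1}+\cdots$) to the corresponding sums appearing in the three propositions. Once these identifications are made, the three formulas follow by direct multiplication.
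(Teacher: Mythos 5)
Your proposal is correct and follows essentially the same route as the paper's own proof: apply the identity \eqref{eq:random_sections_first_identity_exp}, insert the probabilities from Propositions~\ref{prop:Prob_S_cap_F_notempty_Cube}, \ref{prop:Prob_S_cap_F_notempty_crosspoly} and \ref{prop:Prob_S_cap_F_notempty_simplex}, and multiply by the standard face counts $2^{n-d}\binom{n}{d}$, $2^{d+1}\binom{n}{d+1}$ and $\binom{n+1}{d+1}$ with $d=j+l$. Your index substitutions ($l=n-k$, $d=n-k+j$, $n-d=k-j$) and the cancellation $2^{d+1}\cdot 2^{-d}=2$ for the crosspolytope all check out against the paper's computation.
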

\begin{proof}
Let $P$ be an $n$-dimensional polytope of any of the three types mentioned above and let $\phip$ be, respectively, $\phicu$, $\phicr$ or $\phis$.
	
By Equation~\eqref{eq:random_sections_first_identity_exp}, for an arbitrary $(j+l)$-face $F$ we have
	\begin{align}
		\E \phip (j,n-l,n)
		=
		\# ( \cF_{j+l}(P)) &\cdot  \P (F\cap L \neq \emptyset).
	\end{align}
Applying Propositions \ref{prop:Prob_S_cap_F_notempty_Cube}, \ref{prop:Prob_S_cap_F_notempty_crosspoly} and \ref{prop:Prob_S_cap_F_notempty_simplex} respectively and using the well-known numbers of $(j+l)$-faces of regular polytopes, we directly get
	\begin{align*}
		&\E \phicu (j,k,n) = 2^{k-j} \binom{n}{n-k+j} \cdot 2 \left(
		\upsilon_{n-k+1} (\Ccu_{n-k+j}(k-j)) +\upsilon_{n-k+3}(\Ccu_{n-k+j}(k-j))+\ldots
		\right),
		\\
		&\E \phicr(j,n-l,n)  =2^{j+l+1} \binom{n}{j+l+1} \cdot 2^{-j-l} \left(
		\binom{j+l}{l} + \binom{j+l}{l+1} + \ldots
		\right),\\
		&\E\phis(j,n-l,n)=\\
		&\qquad \quad =
		\binom{n+1}{j+l+1} \cdot
		2\left(\upsilon_{l+1} \left(\Cs_{j+l+1} \left( -\frac{1}{n+1} \right)\right)+\upsilon_{l+3}\left(\Cs_{j+l+1} \left( -\frac{1}{n+1} \right)\right)+\ldots \right).
	\end{align*}
\end{proof}


\subsection{Asymptotic results for the expected number of faces in random sections} \label{subsec_random_section_asymptotic}
Asymptotic behavior of the expected number of faces in sections (or, dually, projections) of regular polytopes  have been studied in the works of Vershik and Sporyshev~\cite{vershik_sporyshev}, B\"or\"oczky and Henk~\cite{boroczky}, Lonke~\cite{lonke} and Donoho and Tanner~\cite{donoho_tanner_ProcAcadUSA,donoho_tanner_JAMS,donoho_tanner_DiscrCompGeom}.
More specifically,  Lonke~\cite[(2) or Corollary 3.4]{lonke} proved the following asymptotic formula for $\phicu(n-m,n-l,n)$ for fixed codimensions $1\leq l<m$:
$$
\E\phicu(n-m,n-l,n) \sim \frac{(2n)^{m-l}}{(m-l)!} \quad \text{ as } n\to\infty.
$$
Here, $a_n\sim b_n$ means that $\lim_{n\to\infty} a_n/b_n = 1$.  B\"or\"oczky and Henk~\cite{boroczky} proved an asymptotic formula for $\phicu(j,k,n)$ for fixed $0\leq j <k$:
$$
\E\phicu(j,k,n) \sim
C(j,k)
\cdot
(\log (n))^{\frac{k-1}{2}}
$$
as $n \to \infty$, where the constant $C(j,k)$ can be expressed by
	$$
		C(j,k)=
		\frac{2^k \pi^{\frac{k-1}{2}} \sqrt{k}  (k-1)!  }{(k-j)! j!} \gs_{j}\left( \frac{1}{k-j} \right).
	$$
Recalling that $\gs_0\equiv 1$, the special case $j=0$ recovers a formula Lonke has proven in~\cite[(5)]{lonke}. Based on Theorem~\ref{thm:random_sections_polytopes} we can complement these results by the following new asymptotic regimes.

\begin{corollary} \label{cor:asymp_phis}
Fix some integers $i > l \geq 1$. As $n\to \infty$, we have
		$$
		\binom{n+1}{n-i+l+1} - \E\phis(n-i,n-l,n) \sim C(i,l) \cdot n^{-\frac{n}{2}} n^{\frac{3i-3}{2}} \left( \frac{(i-l) \eee}{2\pi} \right)^{\frac{n}{2}},
		$$
where
		$$
		C(i,l):= \frac{\pi^{\frac{i-2}{2}} 2^\frac{i-2l+1}{2} \eee^{\frac{3l-3i}{2}}}{(l-1)!(i-l)! (i-l)^\frac{i-1}{2}}.
		$$
\end{corollary}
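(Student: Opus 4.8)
The plan is to strip off the combinatorial factor and reduce the whole statement to the large-$N$ asymptotics of a single simplex-cone angle $\gs_N(s)$. First I would turn the difference into a probability of \emph{missing} a face. Applying Theorem~\ref{thm:random_sections_polytopes} with $j=n-i$ (so that $j+l+1=n-i+l+1=:M$ and the relevant cone is $\Cs_M(r)$ with $r=-1/(n+1)$) gives
\[
\binom{n+1}{n-i+l+1}-\E\phis(n-i,n-l,n)
=\binom{n+1}{n-i+l+1}\Big(1-2\big(\upsilon_{l+1}(\Cs_M(r))+\upsilon_{l+3}(\Cs_M(r))+\dots\big)\Big).
\]
Now I would use the two linear relations for the conic intrinsic volumes of the pointed simplicial cone $\Cs_M(r)$, namely $\sum_{k=0}^{M}\upsilon_k=1$ together with the conic Gauss--Bonnet relation $\sum_{k=0}^{M}(-1)^k\upsilon_k=0$ (valid since $\Cs_M(r)$ is not a linear subspace). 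These give $\sum_{k\equiv l+1\,(2)}\upsilon_k=\tfrac12$, hence $1-2(\upsilon_{l+1}+\upsilon_{l+3}+\dots)=2(\upsilon_{l-1}+\upsilon_{l-3}+\dots)$, a \emph{finite} sum of low-index intrinsic volumes. The object to analyse is therefore $2\binom{n+1}{n-i+l+1}(\upsilon_{l-1}+\upsilon_{l-3}+\dots)$.

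Next I would insert the intrinsic-volume formula for the simplex cone, $\upsilon_k(\Cs_M(r))=\binom{M}{k}\gs_k(r)\,\gs_{M-k}\!\big(-\tfrac{r}{1+Mr}\big)$ --- the internal angle $\gs_k(r)$ at a $k$-face times the external angle, the latter being again a simplex-cone angle by the self-polarity of $\Cs$ (Proposition~2.2 of~\cite{kabluchko2017absorption}) after a short computation of the projected Gram matrix. The external parameter turns out to be \emph{independent of $k$} and equal to $s:=-r/(1+Mr)=1/(i-l)$ for our $r$ and $M$. Comparing successive terms, $\upsilon_{l-3}/\upsilon_{l-1}=O(n^{-2})\cdot O(1)\cdot\big(\gs_{N+2}(s)/\gs_N(s)\big)=O(n^{-3})$, so only $k=l-1$ contributes to leading order and the sum is $\sim\binom{M}{l-1}\gs_{l-1}(r)\,\gs_N(s)$ with $N:=M-l+1=n-i+2$. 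Since $r=-1/(n+1)\to0$ I would use $\gs_{l-1}(r)\to\gs_{l-1}(0)=2^{-(l-1)}$, and Stirling gives $\binom{n+1}{n-i+l+1}\sim n^{i-l}/(i-l)!$ and $\binom{M}{l-1}\sim n^{l-1}/(l-1)!$.

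The main obstacle is the precise asymptotics of $\gs_N(s)$ as $N\to\infty$ for fixed $s>0$. Starting from the covariance representation~\eqref{eq:Cs_alt_form} I would rewrite it as the Gaussian integral
\[
\gs_N(s)=\frac{\sqrt{1+Ns}}{(2\pi)^{N/2}}\int_{[0,\infty)^N}\exp\!\Big(-\tfrac12|y|^2-\tfrac{s}{2}\big(\textstyle\sum_i y_i\big)^2\Big)\,dy .
\]
Slicing by $u=\sum_i y_i$ replaces the inner integral by one over the scaled standard simplex $\{y\ge0,\ \sum_i y_i=u\}$, of $(N-1)$-volume $\sqrt N\,u^{N-1}/(N-1)!$; the resulting one-dimensional integral $\int_0^\infty u^{N-1}e^{-su^2/2}(\cdots)\,du$ is of Gamma type and concentrates at $u\sim\sqrt{N/s}$. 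Evaluating it with Stirling, and tracking the Gaussian weight on the slice (which concentrates at the barycentre $y_i\equiv u/N$ and supplies the constant), I expect
\[
\gs_N(s)\sim\sqrt{\tfrac{Ns}{2}}\;e^{-3/(2s)}\Big(\tfrac{e}{2\pi sN}\Big)^{N/2},\qquad N\to\infty .
\]
The delicate points are the exact constant $e^{-3/(2s)}$ and the $\sqrt N$ prefactor: replacing the slice weight by $1$ already yields the order $(\tfrac{e}{2\pi sN})^{N/2}$ and the factor $\sqrt{Ns/2}$ but loses $e^{-3/(2s)}$, so a second-order Laplace expansion is required (equivalently, a steepest-descent analysis of the Hubbard--Stratonovich integral $\int_{\R}e^{-w^2/2}\psi(\sqrt s\,w)^N\,dw$ with $\psi(a)=\int_0^\infty e^{-y^2/2+iay}\,dy$, whose saddle sits at $w=i\sqrt N$).

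Finally I would assemble the factors. With $N=n-i+2$ one has $N^{-N/2}\sim n^{-n/2}n^{(i-2)/2}e^{(i-2)/2}$ and $(\tfrac{(i-l)e}{2\pi})^{N/2}=(\tfrac{(i-l)e}{2\pi})^{n/2}(\tfrac{(i-l)e}{2\pi})^{(2-i)/2}$, so in the product $2\binom{n+1}{n-i+l+1}\binom{M}{l-1}2^{-(l-1)}\gs_N(1/(i-l))$ the polynomial powers of $n$ combine as $n^{(i-l)+(l-1)+1/2+(i-2)/2}=n^{(3i-3)/2}$, the super-exponential factor is $n^{-n/2}(\tfrac{(i-l)e}{2\pi})^{n/2}$, and the remaining constants collapse to exactly $C(i,l)$. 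This last assembly is routine bookkeeping once the $\gs_N$ asymptotics is established.
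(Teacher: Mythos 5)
Your proposal is correct, and its skeleton coincides with the paper's proof: both reduce via Theorem~\ref{thm:random_sections_polytopes}, pass to the complementary low-index intrinsic volumes (the paper simply asserts that the odd-indexed and even-indexed sums each equal $1/2$, which is precisely your Gauss--Bonnet argument), isolate the dominant term $k=l-1$, use $\gs_{l-1}(-\tfrac{1}{n+1})\to 2^{1-l}$, and finish with the binomial asymptotics $\binom{n+1}{n-i+l+1}\sim n^{i-l}/(i-l)!$ and $\binom{M}{l-1}\sim n^{l-1}/(l-1)!$; your ratio bound $\upsilon_{l-3}/\upsilon_{l-1}=O(n^{-3})$ matches the paper's ``finitely many summands, the first dominates'' argument, and your reparametrization $\upsilon_k(\Cs_M(r))=\binom{M}{k}\gs_k(r)\,\gs_{M-k}\bigl(-r/(1+Mr)\bigr)$ is a correct rewriting of \cite[Proposition~1.3]{kabluchko2017absorption}, since $-s/(1+(M-k)s)=r/(1+kr)$ for $s=-r/(1+Mr)$. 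The one genuine divergence is the key analytic input: the paper does \emph{not} derive the asymptotics of the external factor itself, but recognizes $\gs_{n-i+2}\bigl(\tfrac{1}{i-l}\bigr)=\gKZ_{n-i+2}\bigl(-\tfrac{1}{n-l+2}\bigr)$ as the internal angle of a high-dimensional regular simplex at a face of fixed dimension and quotes B\"or\"oczky--Henk \cite[Corollary~2.1]{boroczky} as a black box, whereas you propose a self-contained Laplace/saddle-point analysis of the Gaussian integral representation, conjecturing
\[
\gs_N(s)\sim\sqrt{\tfrac{Ns}{2}}\;\eee^{-3/(2s)}\Bigl(\tfrac{\eee}{2\pi sN}\Bigr)^{N/2},\qquad N\to\infty.
\]
I have checked that this conjecture is exactly right: substituting $N=n-i+2$, $s=1/(i-l)$ and expanding $(n-i+2)^{-(n-i+2)/2}\sim n^{-n/2}\,n^{(i-2)/2}\,\eee^{(i-2)/2}$ reproduces the paper's asymptotics~\eqref{eq:gs_asmyp} (with the paper's summation index equal to $1$), including the constant $(2\sqrt{\pi})^{-1}\eee^{(3l-3i)/2}\bigl(2\pi/(i-l)\bigr)^{(i-1)/2}$, and your final bookkeeping then collapses to exactly $C(i,l)$. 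What each approach buys: the citation makes the proof short but outsources the hardest analysis; your route is self-contained and conceptually transparent (the whole corollary then rests only on results already in this paper plus \cite{kabluchko2017absorption}), but the crucial second-order Laplace step producing the constant $\eee^{-3/(2s)}$ and the $\sqrt{N}$ prefactor is only sketched (``I expect''), and carrying it out rigorously amounts to reproving the B\"or\"oczky--Henk angle asymptotics --- so to turn your plan into a complete proof you must either execute that steepest-descent computation in full or fall back on the citation, as the paper does.
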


\begin{corollary} \label{cor:asymp_phicu_j_n-l_n}
Fix some integers $j\geq 0$ and $l\geq 1$. Then, as $n\to \infty$, we have
$$
\E \phicu(j,n-l,n)
\sim
2^{n-j} \frac{n^{j+(l/2)}}{l!j!}  \frac{\Gamma\left( \frac{l+1}{2} \right)}{\pi^{(l+1)/2}}.
	$$
\end{corollary}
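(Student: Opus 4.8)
The plan is to start from the formula for $\E\phicu(j,n-l,n)$ given in Theorem~\ref{thm:random_sections_polytopes} and extract its leading order as $n\to\infty$ with $j,l$ fixed. Writing $m:=l+j$, that theorem (with $k=n-l$) reads
$$
\E\phicu(j,n-l,n)
=
2^{\,n-l-j+1}\binom{n}{l+j}
\Big(
\upsilon_{l+1}\big(\Ccu_{m}(n-m)\big)+\upsilon_{l+3}\big(\Ccu_{m}(n-m)\big)+\ldots
\Big),
$$
so the whole quantity is governed by the conic intrinsic volumes of the \emph{fixed-dimensional} cone $\Ccu_m(n-m)$ whose parameter $\sigma^2=n-m$ grows to infinity. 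I would feed these into the explicit expression of Theorem~\ref{thm:intrinsic_volumes}, which gives, for each admissible index $k'$,
$$
\upsilon_{k'}\big(\Ccu_{m}(n-m)\big)
=
2^{\,m-k'+1}\binom{m}{k'-1}\,
\gcu_{k'-1}\!\big(n-k'+1\big)\,
\gs_{m-k'+1}\!\Big(\tfrac{1}{n-m}\Big),
$$
reducing the task to the separate large-$n$ asymptotics of the two angle factors.

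The heart of the argument, and the step I expect to be the main obstacle, is the large-parameter asymptotics of $\gcu_p(\tau^2)$ as $\tau^2\to\infty$, with $p=k'-1$. Using $\gcu_p(\tau^2)=\P(\xi_{p+1}\ge\tau M)$ where $M:=\max_{1\le i\le p}|\xi_i|$, and conditioning on $M$, one has $\gcu_p(\tau^2)=\int_0^\infty(1-\Phi(\tau s))\,f_M(s)\,\dd s$. Since $1-\Phi(\tau s)$ concentrates the mass at $s$ of order $1/\tau$, the integral is controlled by the behaviour of $f_M$ at the origin, where $2\Phi(s)-1\sim s\sqrt{2/\pi}$ and hence $f_M(s)\sim p\,(2/\pi)^{p/2}s^{p-1}$. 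Rescaling $u=\tau s$ and evaluating $\int_0^\infty(1-\Phi(u))u^{p-1}\dd u=2^{(p-1)/2}\Gamma(\tfrac{p+1}{2})/(p\sqrt{2\pi})$ by a single integration by parts, one obtains
$$
\gcu_p(\tau^2)\sim 2^{\,p-1}\,\pi^{-(p+1)/2}\,\Gamma\!\big(\tfrac{p+1}{2}\big)\,\tau^{-p},\qquad \tau\to\infty.
$$
With $p=k'-1$ and $\tau^2=n-k'+1\sim n$ this yields $\gcu_{k'-1}(n-k'+1)\sim 2^{\,k'-2}\pi^{-k'/2}\Gamma(k'/2)\,n^{-(k'-1)/2}$.

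The second factor is gentler: as $n\to\infty$ the argument $1/(n-m)\to0$, and since $\Cs_q(0)$ is the positive hull of $q$ orthonormal vectors, i.e.\ an orthant, we have $\gs_q(0)=2^{-q}$; by continuity of $r\mapsto\gs_q(r)$ at $r=0$ this gives $\gs_{m-k'+1}(1/(n-m))\to 2^{-(m-k'+1)}$. Multiplying the two factors, each summand satisfies $\upsilon_{k'}(\Ccu_m(n-m))\sim 2^{\,k'-2}\binom{m}{k'-1}\pi^{-k'/2}\Gamma(k'/2)\,n^{-(k'-1)/2}$, so it decays like $n^{-(k'-1)/2}$ and the finite sum over $k'\in\{l+1,l+3,\ldots\}$ is dominated by its first term $k'=l+1$, all others being smaller by a factor at least $n^{-1}$. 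Substituting $k'=l+1$, $m=l+j$, multiplying by $2^{\,n-l-j+1}\binom{n}{l+j}\sim 2^{\,n-l-j+1}n^{l+j}/(l+j)!$, and collecting powers of $2$ and $n$ together with $\binom{l+j}{l}/(l+j)!=1/(l!\,j!)$, produces exactly
$$
\E\phicu(j,n-l,n)\sim 2^{\,n-j}\,\frac{n^{\,j+l/2}}{l!\,j!}\,\frac{\Gamma(\tfrac{l+1}{2})}{\pi^{(l+1)/2}}.
$$
The only delicate points are turning the ``mass concentrates near $0$'' heuristic for $\gcu_p$ into a rigorous statement (a routine Watson's lemma / dominated-convergence argument, after checking that the contribution of $s$ bounded away from $0$ is exponentially small), and interchanging the asymptotic equivalences with the sum, which is immediate since the number of summands is fixed.
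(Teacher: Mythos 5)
Your proposal is correct and follows essentially the same route as the paper's proof: reduce via Theorem~\ref{thm:random_sections_polytopes} and Theorem~\ref{thm:intrinsic_volumes}, use $\gs_q\left(\tfrac{1}{n-l-j}\right)\to 2^{-q}$, establish $\gcu_p(\tau^2)\sim 2^{p-1}\pi^{-(p+1)/2}\Gamma\left(\tfrac{p+1}{2}\right)\tau^{-p}$ as $\tau\to\infty$, and observe that the finite sum of intrinsic volumes is dominated by its first ($s=1$, i.e.\ $k'=l+1$) term. The only, immaterial, difference is in the middle step: you condition on the maximum $M$ and rescale $\int_0^\infty (1-\Phi(\tau s))f_M(s)\,\dd s$ in Watson-lemma style, whereas the paper conditions on the last coordinate and applies dominated convergence to $\int_0^\infty \varphi(t)\,F^{p}\left(t/\tau\right)\dd t$ — the two computations are related by integration by parts and yield the identical constant.
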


\section{Proofs}\label{sec:proofs}
\subsection{Cones, angles, conic intrinsic volumes - Proofs} \label{sec:proofs_cones_and_angles}
\begin{proof}[Proof of Proposition \ref{prop:alt_forms_of_cones}]
We start with the proof of identity \eqref{eq:Ccr_alt_form}.
To prove it we shall show that the cone
$$
C:=\left\{ x = (x_1,\ldots,x_{n+1})\in\R^{n+1} : x_{n+1}\geq \sigma \sum_{i=1}^{n} |x_i| \right\}
$$
is the positive hull of the vectors
$v_i^+ :=\sigma \eee_{n+1}+ \eee_i$ and $v_i^-:=\sigma \eee_{n+1}- \eee_i$, $i=1,\ldots, n$.

Fix any $x=(x_1,\ldots,x_{n+1})\in C$. It can be written in the form
\begin{align*}
x
&=
\sum_{i=1}^n (x_i \eee_i + \sigma|x_i|\eee_{n+1}) + \left(x_{n+1}-\sigma \sum_{i=1}^n |x_i|\right) \eee_{n+1}\\
&=
\sum_{i=1}^n |x_i| (v_i^+ \indi{x_i\geq 0}  + v_i^- \indi{x_i<0}) +\frac{x_{n+1}- \sigma\sum_{i=1}^n |x_i|}{2\sigma} (v_1^++v_1^-),
\end{align*}
where in the second line we used that $v_1^++v_1^-=2\sigma \eee_{n+1}$. Since $x\in C$, we have  $x_{n+1}-\sigma \sum_{i=1}^n |x_i| \geq 0$.
Hence, the coefficients in the above representation are non-negative and we conclude that $x\in \pos(v_i^+, v_i^- : i=1,\ldots,n)= \Ccr_n(\sigma^2)$, thus proving that  $C\subset \Ccr_n(\sigma^2)$. The converse inclusion $\Ccr(\sigma^2) \subset C$
is obvious, since every $v_i^+$ and every $v_i^-$ is in $C$.

Now we proceed to the proof of \eqref{eq:Ccu_alt_form}.
This time consider the cone
$$
C:=\left\{ x=(x_1,\ldots,x_{n+1})\in\R^{n+1} : x_{n+1}\geq \sigma \max_{1\leq i \leq n }|x_i| \right\}.
$$
Take some $x\in C$. To prove that $x\in \Ccu_n(\sigma^2)$, we shall show that $x$ is contained in the positive hull of the vectors $\sigma \eee_{n+1} + \eps$, where $\eps\in \{-1,1\}^n\subset \R^n$. This is evident if $x_{n+1}=0$ (since then $x=0$). Therefore, let $x_{n+1}>0$. Then, we can write
$
x = \sigma^{-1} x_{n+1}(y_1,\ldots,y_n,y_{n+1}),
$
where $y_{n+1}=\sigma$ and $(y_1,\ldots,y_n) = \sigma x_{n+1}^{-1} (x_1,\ldots,x_n)\in [-1,1]^n$.
Any point $(y_1,\ldots,y_n)$ in the cube $P:=[-1,1]^n$ can be represented as a convex combination of the vertices of the cube, which form the set $\{-1,1\}^n$. It follows that the point $y=(y_1,\ldots, y_n, \sigma)$, which belongs to the shifted cube $P+\sigma \eee_{n+1}\subset \R^{n+1}$, can be represented as a convex  combination of the points of the form $\sigma \eee_{n+1} + \eps$, where $\eps\in \{-1,1\}^n$. Hence, $x$ can be represented as a positive combination of the same points, thus proving that  $C\subset \Ccu_n(\sigma^2)$. The converse inclusion is evident since $\sigma \eee_{n+1} + \eps \in C$ for every $\eps\in \{-1,1\}^n$.
\end{proof}

\begin{proof}[Proof of Proposition \ref{prop:duality_cones}]
Let $F$ be a face of $\Ccu_n(\sigma^2)$ of dimension $n$. Note that $\Ccu_n(\sigma^2)$ has dimension $n+1$.
There is an index $i\in\{1,\ldots,n\}$ and a sign $\tau \in\{-1,1\}$ such that
\begin{align}
F=\pos\bigg\{ \sigma \eee_{n+1}+\tau \eee_i +\sum_{\substack{1\le j\le n\\j\neq i}} \eps_j \eee_j : \eps_1,\ldots,\eps_n \in \{-1,1\}\bigg\} \label{eq:F}.
\end{align}
The linear space spanned by $F$ is
$$
\lin F = \bigg\{\sigma \eee_{n+1}+\tau \eee_i + \sum_{\substack{1\le j\le n\\j\neq i}} \alpha_j \eee_j : \alpha_1,\ldots,\alpha_n \in \R\bigg\}.
$$
The orthogonal complement of the vector space generated by $F$ is therefore
\begin{align*}
(\lin F)^\perp =\Span (-\frac 1\sigma\eee_{n+1} + \tau \eee_i).
\end{align*}
Moreover, $v:=-\frac 1\sigma\eee_{n+1} + \tau \eee_i\in F^\perp$ is a vector in this space that has non-negative scalar products to all vectors in $\Ccu_n(\sigma^2)$.
Since the polar cone $\left(\Ccu_n(\sigma^2)\right)^\circ$ is spanned by the orthogonal complements of the faces of dimension $n$, it is given by
$$
\left(\Ccu_n(\sigma^2)\right)^\circ
=
\pos\left(-\frac{1}{\sigma}\eee_{n+1}\pm \eee_i:i\in\{1,\ldots,n\}\right)
=
-\Ccr_n\left(\frac{1}{\sigma^2}\right).
$$
The second claim of the proposition follows from the first claim (with $\sigma$ replaced by $1/\sigma$) together with $C^{\circ\circ} = C$.
\end{proof}

\begin{proof}[Proof of Proposition \ref{prop:inner_outer_angle}]
Fix $l \in \{ 1,\ldots,n \}$.
We will determine the tangent cone and the normal cone of an $(l-1)$-dimensional face $F$ of the $n$-dimensional crosspolytope $P_{n} \subset \R^{n}$.
Without loss of generality let $F$ be the face given by
$$
F=\left\{ (f_1,\ldots,f_n)\in\R^n: f_1,\ldots,f_l\geq 0, f_{l+1}=\ldots=f_n=0,\sum_{i=1}^l f_i=1 \right\}.
$$
Fix any point $f\in \relint F$ meaning that $f_1,\ldots,f_l>0$.
The tangent cone $T_F:=T_F(P_{n})$ is the set of all $v\in \R^n$ satisfying
$$
f+\eps v \in P_{n}
$$
for an $\eps >0$. Since the crosspolytope is the unit ball of the $1$-norm on $\R^n$, $P_{n}$ can be characterized via
$$
P_{n}=\left\{u=(u_1,\ldots,u_n)\in \R^n:\sum_{j=1}^n|u_j|\le 1\right\} .
$$
Hence the tangent cone is given by
\begin{align*}
T_F = \{ v\in\R^n: \text{There is } \eps >0 : f+v \eps \in P_{n} \}
&=  \left\{ \sum_{i=1}^l v_i + \sum_{i=l+1}^n|v_i| \le 0 \right\} \\
&= \left\{ -\sum_{i=1}^l v_i \ge \sum_{i=l+1}^n|v_i| \right\} .
\end{align*}
The lineality space of the tangent cone is
$$
\Lineal(T_F)=T_F \cap (-T_F)
= \left\{ \sum_{j=1}^lv_j=0 \right\} \cap \left\{ v_{l+1}=\ldots=v_n=0 \right\} .
$$
The polytope's internal solid angle is the angle of the cone
\begin{align*}
D_{n,l} := T_F \cap (\Lineal(T_F))^\perp = \{ v_1=\ldots=v_l \} \cap \left\{ -l v_1 \ge \sum_{j=l+1}^n|v_j|  \right\}.
\end{align*}
Lemma \ref{Lem:D_n=C_n-k} below  states that $D_{n,l}$ is isometric to $\Ccr_{n-l}(1/l)$. Together with Corollary \ref{cor:angles_cones} this proves that the internal solid angle has the form \eqref{eq:crosspoly_inner_solid_angle} with $k=l-1$.

The normal cone $N_F = N_F(P_n)$ is defined to be the polar cone of $T_F$, hence
$$
N_F \cong \big(D_{n,l} + \Lineal (T_F\big))^\circ
=
D_{n,l}^\circ \cap \big( \Lineal (T_F) \big)^\bot,
$$
where $\cong$ denotes isometry of cones.
In other words, $N_F$ is the polar cone of $D_{n,l}$ with respect to the ambient space $\big(\Lineal (T_F)\big)^\bot$.
Recalling that $D_{n,l}\cong \Ccr_{n-l}\left( 1/l \right)$,  $\Ccr_{n-l}\left( 1/l \right) \subset \R^{n-l+1}$ and $\dim \big(\Lineal (T_F)\big)^\bot = n-l+1$, we have
$$
N_F \cong \left(\Ccr_{n-l}\left( \frac{1}{l} \right) \right)^\circ \cong \Ccu_{n-l}(l),
$$
where we used Proposition \ref{prop:duality_cones} in the last step. Again, Corollary \ref{cor:angles_cones} gives \eqref{eq:crosspoly_outer_solid_angle} with $k=l-1$.
\end{proof}
To complete the proof of Proposition \ref{prop:inner_outer_angle}, it remains to establish the following
\begin{lemma} \label{Lem:D_n=C_n-k}
For $n\in \{2,3,\ldots\}$ and $k\in \{1\ldots,n-1\}$ the cone
$$
D_{n,k}=\left\{ (v_1,\ldots,v_n)\in \R^n: v_1=\ldots=v_k , -k v_1\geq \sum_{j=k+1}^{n}|v_j| \right\}
$$
is isometric to $\Ccr_{n-k}\left( \frac{1}{k} \right)$.
\end{lemma}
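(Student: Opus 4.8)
The plan is to use the criterion recorded right after~\eqref{eq:Ccu_scalar_products}: since $D_{n,k}$ is a positive hull of finitely many vectors, it suffices to exhibit a set of generators of $D_{n,k}$ whose pairwise scalar products match those in~\eqref{eq:Ccr_scalar_products} for $\Ccr_{n-k}(\sigma^2)$ with $\sigma^2 = 1/k$. First I would record that $D_{n,k}$ lives in the subspace $W := \{v\in\R^n : v_1 = \ldots = v_k\}$, which has dimension $n-k+1$, exactly the dimension needed to host an isometric copy of $\Ccr_{n-k}(1/k)\subset\R^{n-k+1}$. Writing $w := \eee_1 + \ldots + \eee_k$, I claim the extreme rays of $D_{n,k}$ are generated by
$$
g_i^{\pm} := -\tfrac1k\, w \pm \eee_{k+i}, \qquad i = 1,\ldots,n-k,
$$
so that $D_{n,k} = \pos\{g_i^+, g_i^- : i = 1,\ldots,n-k\}$.

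Granting this, the verification is a short Gram-matrix computation. Using $\langle w,w\rangle = k$ and $\langle w, \eee_{k+i}\rangle = 0$ one obtains
$$
\langle g_i^+, g_j^+\rangle = \langle g_i^-, g_j^-\rangle = \tfrac1k + \delta_{i,j}, \qquad \langle g_i^+, g_j^-\rangle = \tfrac1k - \delta_{i,j},
$$
which is precisely the system~\eqref{eq:Ccr_scalar_products} for $\Ccr_{n-k}(\sigma^2)$ with $\sigma^2 = 1/k$, after identifying $g_i^{\pm}$ with the spanning vectors $v_i^{\pm}$. The cited criterion then yields at once that $D_{n,k}$ is isometric to $\Ccr_{n-k}(1/k)$.

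The only genuine work, and the step I expect to be the main obstacle, is justifying the claimed set of generators, that is, the identity $D_{n,k} = \pos\{g_i^\pm\}$. This is really a rescaled instance of~\eqref{eq:Ccr_alt_form}: parametrising $W$ by $t := v_1 = \ldots = v_k$ together with the free coordinates $v_{k+1},\ldots,v_n$, the defining inequality of $D_{n,k}$ becomes $-k t \geq \sum_{j=k+1}^n |v_j|$, and repeating the decomposition from the proof of Proposition~\ref{prop:alt_forms_of_cones} (with $-kt$ playing the role of the distinguished coordinate) shows that every such $v$ is a non-negative combination of the $g_i^\pm$; the reverse inclusion is immediate since each $g_i^\pm$ satisfies the inequality with equality. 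A cleaner, essentially equivalent alternative would bypass generators: fix the orthonormal basis $u_0 := \tfrac{1}{\sqrt k}\,w$, $u_i := \eee_{k+i}$ of $W$, write $v = a_0 u_0 + \sum_{i=1}^{n-k} a_i u_i$ so that $t = a_0/\sqrt k$, and check that the linear isometry $W \to \R^{n-k+1}$, $v \mapsto (a_1,\ldots,a_{n-k},-a_0)$, carries $D_{n,k}$ onto $\{x_{n-k+1} \geq \tfrac{1}{\sqrt k}\sum_{i=1}^{n-k}|x_i|\}$, which equals $\Ccr_{n-k}(1/k)$ by~\eqref{eq:Ccr_alt_form}. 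In either route the one point to watch is the normalisation: the factor $\sqrt k$ arising from $\langle w,w\rangle = k$ is exactly what converts the coefficient $k$ in the inequality into $\sigma = 1/\sqrt k$, i.e.\ $\sigma^2 = 1/k$.
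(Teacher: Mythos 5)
Your proposal is correct and takes essentially the same route as the paper: your generators $g_i^{\pm} = -\tfrac1k(\eee_1+\ldots+\eee_k)\pm\eee_{k+i}$ are exactly the paper's vectors $u_i^{\pm}/k$, and both arguments establish $D_{n,k}=\pos\{g_i^{\pm}\}$ by the same decomposition (matching the last $n-k$ coordinates first, then absorbing the remaining slack along $\eee_1+\ldots+\eee_k$ via the nonnegative multiple of $g_1^++g_1^-$) before verifying the Gram matrix against~\eqref{eq:Ccr_scalar_products} with $\sigma^2=1/k$. Your alternative route through the orthonormal basis of $W$ and the half-space description~\eqref{eq:Ccr_alt_form} is also valid but is only a repackaging of the same computation, so there is nothing substantive to distinguish it from the paper's argument.
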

\begin{proof}
	We will show that
	\begin{align}
	D_{n,k}= \pos\{u_1^+,\ldots,u_{n-k}^+, u_1^-,\ldots,u_{n-k}^-\} , \label{eq:Lem_inner_solid_angle_goal}
	\end{align}
	where for $i\in \{1,\ldots,n-k\}$
	\begin{align*}
	u_i^+:=-\sum_{j=1}^{k} \eee_j + k \eee_{k+i}, \qquad
	u_i^-:=-\sum_{j=1}^{k} \eee_j - k \eee_{k+i}.
	\end{align*}
	Obviously every $u_i^+$ and every $u_i^-$ is an element of $D_{n,k}$. Hence it is sufficient to show that $D_{n,k} \subset \pos\{u_1^+,\ldots,u_{n-k}^+, u_1^-,\ldots,u_{n-k}^-\}$.
	
	Fix any $v=(v_1,\ldots,v_n)\in D_{n,k}$. By definition
	\begin{align}
	v_1=\ldots=v_k, \label{eq:Lemma_inner_solid_angle_1} \\
	-k v_1 \geq \sum_{j=k+1}^n|v_j|  \label{eq:Lemma_inner_solid_angle_2}.
	\end{align}
	Let $\sgn(r)=\indi{r\geq 0} - \indi{r<0}$ be the sign function.  The vector
	$$
	x:=\sum_{j=k+1}^{n} \frac{|v_j|}{k} u_{j-k}^{\sgn v_j} \in \pos\{u_1^+,\ldots,u_{n-k}^+, u_1^-,\ldots,u_{n-k}^-\}
	$$
	equals $v$ in the last $n-k$ components and the components of $x$ satisfy $x_1=\ldots=x_k$ and $-k x_1 = \sum_{j=k+1}^n|x_j|$.
	By \eqref{eq:Lemma_inner_solid_angle_2}, we have $v_1 \leq x_1$ and thus with the factor
	$$
	\lambda := x_1-v_1=-\frac{1}{k}\sum_{j=k+1}^n|v_j|-v_1 \geq 0
	$$
	we have
	$$
	v=x-\lambda \sum_{j=1}^{k}\eee_j= x+\lambda \frac{u_1^++u_1^-}{2}  \in \pos\{u_1^+,\ldots,u_{n-k}^+, u_1^-,\ldots,u_{n-k}^-\},
	$$
	thus proving \eqref{eq:Lem_inner_solid_angle_goal}. Hence,
	$$
	D_{n,k} = \pos\left\{\frac{u_1^+}{k},\ldots,\frac{u_{n-k}^+}{k}, \frac{u_1^-}{k},\ldots,\frac{u_{n-k}^-}{k}\right\} .
	$$
	The spanning vectors satisfy
	$$
	\left\langle \frac{u_i^+}{k}, \frac{u_j^+}{k} \right\rangle = \left\langle \frac{u_i^-}{k}, \frac{u_j^-}{k} \right\rangle = \frac{1}{k} + \delta_{i,j}, \qquad
	\left\langle \frac{u_i^+}{k}, \frac{u_j^-}{k} \right\rangle = \frac{1}{k} - \delta_{i,j}
	$$
	and thus by \eqref{eq:Ccr_scalar_products} the lemma is proven.
\end{proof}

To prove Theorem \ref{thm:intrinsic_volumes} we will need the following two lemmas.
\begin{lemma} \label{lem:face_Ccu_nomal_cone}
	Fix $n\in \N$ and $k \in \{1,\ldots,n\}$.
	Let $F$ be a $k$-face of the cone $\Ccu_n(\sigma^2)$.
	Then the normal cone $N_F\big(\Ccu_n(\sigma^2)\big)$ of $\Ccu_n(\sigma^2)$ at the face $F$ is isometric to $\Cs_{n-k+1}\left( \frac{1}{\sigma^2} \right)$.
\end{lemma}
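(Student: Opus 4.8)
The plan is to describe the $k$-face $F$ combinatorially, compute the linear hull $\lin F$ together with its orthogonal complement, and then intersect the polar cone $\left(\Ccu_n(\sigma^2)\right)^\circ$ with $(\lin F)^\perp$ to put $N_F$ into an explicit form amenable to the scalar-product criterion \eqref{eq:Cs_scalar_products}.

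First I would identify the faces. Since $\Ccu_n(\sigma^2)=\pos\big(\sigma\eee_{n+1}+[-1,1]^n\big)$ is the positive hull of a shifted cube lying in the hyperplane $\{x_{n+1}=\sigma\}$, which does not pass through the apex, the $k$-faces of the cone with $k\geq 1$ are exactly the cones spanned by the $(k-1)$-faces of the cube. Such a $(k-1)$-face is obtained by fixing $n-k+1$ of the coordinates to values in $\{-1,1\}$ and letting the remaining $k-1$ coordinates range freely. Writing $I$ for the set of $k-1$ free coordinates, $J=\{1,\ldots,n\}\setminus I$ for the $n-k+1$ fixed coordinates, and $(\tau_j)_{j\in J}\in\{-1,1\}^J$ for the chosen signs, I would record that $F=\pos\big(\sigma\eee_{n+1}+\sum_{j\in J}\tau_j\eee_j+\sum_{i\in I}\eps_i\eee_i:\eps_i\in\{-1,1\}\big)$, so that $\lin F$ is spanned by $\{\eee_i:i\in I\}$ together with $\sigma\eee_{n+1}+\sum_{j\in J}\tau_j\eee_j$. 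Hence $(\lin F)^\perp=\{x\in\R^{n+1}: x_i=0\ (i\in I),\ \sigma x_{n+1}+\sum_{j\in J}\tau_j x_j=0\}$, a subspace of dimension $n-k+1$, matching the expected dimension of $N_F$.

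Next I would use the definition $N_F=\left(\Ccu_n(\sigma^2)\right)^\circ\cap(\lin F)^\perp$ together with Proposition \ref{prop:duality_cones} and the explicit form \eqref{eq:Ccr_alt_form}, which give $\left(\Ccu_n(\sigma^2)\right)^\circ=-\Ccr_n(1/\sigma^2)=\{x:-x_{n+1}\geq\frac1\sigma\sum_{i=1}^n|x_i|\}$. On $(\lin F)^\perp$ one has $x_i=0$ for $i\in I$ and $x_{n+1}=-\frac1\sigma\sum_{j\in J}\tau_j x_j$, so the polarity inequality reduces to $\sum_{j\in J}\tau_j x_j\geq\sum_{j\in J}|x_j|$. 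Since $\tau_j x_j\leq|x_j|$ term by term, this forces equality in each term, i.e.\ $\tau_j x_j\geq 0$ for every $j\in J$. Setting $y_j:=\tau_j x_j\geq 0$ then yields the parametrization $x=\sum_{j\in J}y_j\big(\tau_j\eee_j-\frac1\sigma\eee_{n+1}\big)$, so $N_F=\pos\big(\tau_j\eee_j-\frac1\sigma\eee_{n+1}:j\in J\big)$.

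Finally, to conclude I would invoke the scalar-product criterion: for $j,j'\in J$ the spanning vectors $w_j:=\tau_j\eee_j-\frac1\sigma\eee_{n+1}$ satisfy $\langle w_j,w_{j'}\rangle=\tau_j\tau_{j'}\delta_{j,j'}+\frac1{\sigma^2}=\frac1{\sigma^2}+\delta_{j,j'}$, which is exactly relation \eqref{eq:Cs_scalar_products} with parameter $r=1/\sigma^2>0$ and $|J|=n-k+1$ vectors. By the remark following \eqref{eq:Ccu_scalar_products}, $N_F$ is therefore isometric to $\Cs_{n-k+1}(1/\sigma^2)$. I expect the one genuinely delicate point to be the reduction of the intersection in the third step: the observation that combining the polar inequality with the equality constraint coming from $(\lin F)^\perp$ collapses the normal cone to a simplicial cone by forcing the coordinatewise sign conditions $\tau_j x_j\geq 0$. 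Everything else is bookkeeping and a short Gram-matrix computation.
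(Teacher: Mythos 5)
Your proof is correct, and it reaches the normal cone by a genuinely different route than the paper. The paper fixes, by symmetry, one representative face, namely the face containing $f=\sigma\eee_{n+1}+\sum_{i=k}^{n}\eee_i$ in its relative interior, computes the tangent cone $T_F(\Ccu_n(\sigma^2))=\{v\in\R^{n+1}:\max_{k\le i\le n}v_i\le v_{n+1}/\sigma\}$ directly from the inequality description \eqref{eq:Ccu_alt_form}, extracts its lineality space, and then polarizes to obtain $N_F=\pos\{-\frac{1}{\sigma}\eee_{n+1}+\eee_j:j=k,\ldots,n\}$. You instead take the definition $N_F=\left(\Ccu_n(\sigma^2)\right)^\circ\cap(\lin F)^\perp$ from the paper's definition of conic intrinsic volumes (for polyhedral cones this agrees with the polar of the tangent cone, so there is no gap in using it here) and feed into it the already-established polarity $\left(\Ccu_n(\sigma^2)\right)^\circ=-\Ccr_n\left(\frac{1}{\sigma^2}\right)$ of Proposition~\ref{prop:duality_cones} together with the norm description \eqref{eq:Ccr_alt_form}; the intersection then collapses via the clean termwise argument that $\sum_{j\in J}\tau_jx_j\ge\sum_{j\in J}|x_j|$ combined with $\tau_jx_j\le|x_j|$ forces $\tau_jx_j=|x_j|\ge 0$ for each $j$, yielding $N_F=\pos\left(\tau_j\eee_j-\frac{1}{\sigma}\eee_{n+1}:j\in J\right)$. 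What your route buys: it avoids the tangent-cone and lineality-space computation entirely, recycles Proposition~\ref{prop:duality_cones} as the key lemma, and handles an arbitrary $k$-face (general $I$, $J$, $(\tau_j)$) with no symmetry reduction; it does rely on the correspondence between $k$-faces of the cone and $(k-1)$-faces of the cube, which is legitimate since the cone is pointed over a cross-section in a hyperplane off the origin, and the paper itself invokes exactly this correspondence in the proof of Theorem~\ref{thm:intrinsic_volumes}. What the paper's route buys: it needs no prior knowledge of the face structure, and the explicit tangent-cone computation is of the same style the paper reuses elsewhere (e.g.\ in the proof of Proposition~\ref{prop:inner_outer_angle}). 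Both arguments finish identically, checking the Gram matrix $\langle w_j,w_{j'}\rangle=\frac{1}{\sigma^2}+\delta_{j,j'}$ against the criterion \eqref{eq:Cs_scalar_products} to conclude $N_F\cong\Cs_{n-k+1}\left(\frac{1}{\sigma^2}\right)$.
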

\begin{proof}
	Since $\sum_{i=k}^{n} \eee_i$ is a point in the relative interior of a $(k-1)$-dimensional face of the $n$-dimensional cube $[-1,1]^n$, it follows that
	$$
	f:=\sigma \eee_{n+1} + \sum_{i=k}^{n} \eee_i
	$$
	is a point in the relative interior of a $k$-dimensional face $F$ of $\Ccu_n(\sigma^2)$.
	By the symmetry of the cone it is sufficient to show that the normal cone of $\Ccu_n(\sigma^2)$ at this face $F$ is isometric to $\Cs_{n-k+1}\left( \frac{1}{\sigma^2} \right)$.
	
First we describe the tangent cone of $\Ccu_n(\sigma^2)$ at $F$. Note that for any $v\in\R^{n+1}$ and any $\delta >0$,
	$$
	f+\delta v = (\sigma +v_{n+1} \delta)\eee_{n+1}+\sum_{i=k}^{n} (1+v_i \delta) \eee_i+\sum_{i=1}^{k-1}v_i\delta \eee_i.
	$$
	Using this identity and the fact that by \eqref{eq:Ccu_alt_form} the cone $\Ccu_n(\sigma^2)$ can be written as
	$$
	\Ccu_n(\sigma^2) = \left\{ \sum_{i=1}^{n+1} \beta_i \eee_i : \max_{1\le i\le n} |\beta_i|\le \frac{\beta_{n+1}}{\sigma} \right\} ,
	$$
	we have that for every $v\in \R^{n+1}$,
	$$
	f+\delta v \in \Ccu_n(\sigma^2) \text{ for sufficiently small } \delta>0 \iff \max_{i\in\{k,\ldots,n\}} v_i \le \frac{v_{n+1}}{\sigma} .
	$$
	Hence the tangent cone of $\Ccu_n(\sigma^2)$ at $F$ is
	\begin{align*}
	T_F(\Ccu_n(\sigma^2))
	&:=
	\{v\in\R^{n+1} : f+\delta v \in \Ccu_n(\sigma^2) \text{ for some } \delta >0\}\\
	&=
	\bigg\{v\in\R^{n+1} : \max_{i\in\{k,\ldots,n\}} v_i \le \frac{v_{n+1}}{\sigma} \bigg\}.
	\end{align*}
	Using that
	$$
	-T_F(\Ccu_n(\sigma^2))
	=
	\bigg\{v\in\R^{n+1} : \min_{i\in\{k,\ldots,n\}} v_i \ge \frac{v_{n+1}}{\sigma} \bigg\}
	$$
	we conclude that the lineality space of the tangent cone is given by
	\begin{align*}
	\Lineal(T_F(\Ccu_n(\sigma^2)))
	&=T_F(\Ccu_n(\sigma^2))\cap \Big(-T_F(\Ccu_n(\sigma^2))\Big) \\
	&=\bigg\{ v\in\R^{n+1}:v_{k}=\ldots=v_n=\frac{v_{n+1}}{\sigma} \bigg\}.
	\end{align*}
	The tangent cone $T_F(\Ccu_n(\sigma^2))$ is the Minkowski sum of its lineality space and the cone $D$ given by
	$$
	D=\left\{ v\in\R^{n+1}:\max_{i\in\{ k,\ldots,n \}} v_i \le 0, v_{n+1}=0\right\}.
	$$
	Since the smallest linear space containing $T_F(\Ccu_n(\sigma^2))$ is the whole $\R^{n+1}$, the normal tangent cone is the polar cone of $T_F(\Ccu_n(\sigma^2))$.
	It is
	\begin{align*}
	\Big(T_F(\Ccu_n(\sigma^2))\Big)^\circ
	&=
	\Big(\Lineal\big(T_F(\Ccu_n(\sigma^2))\big)\Big)^\perp \cap D^\circ \\
	&=
	\left\{ v\in\R^{n+1}:\sigma v_{n+1}=- \sum_{i=k}^n v_i , v_1=\ldots=v_{k-1}=0 \right\} \\
	&\quad \cap
	\left\{ v\in\R^{n+1}: \min_{i\in\{k,\ldots,n\}}v_{i}\ge 0 , v_1=\ldots=v_{k-1}=0 \right\} \\
	&=
	\left\{ v\in \R^{n+1}:v_1=\ldots=v_{k-1}=0, v_{k},\ldots,v_{n}\geq 0, \sigma v_{n+1}=-\sum_{i=k}^{n}v_i \right\} \\
	&=
	\pos \left\{ -\frac{1}{\sigma} \eee_{n+1}+\eee_j : j=k,\ldots,n \right\}\\
	&\cong
	\Cs_{n-k+1} \left( \frac{1}{\sigma^2} \right) .
	\end{align*}
	The isometry in the last step follows from \eqref{eq:Cs_scalar_products}.
\end{proof}
\begin{lemma} \label{lem:face_Ccube=Ccube}
	For $n\in \N$, $k\in \{1,\ldots, n+1\}$ and $\sigma^2>0$, any $k$-dimensional face of the cone $\Ccu_n(\sigma^2)$ is isometric to $\Ccu_{k-1}(\sigma^2+n-k+1)$.
\end{lemma}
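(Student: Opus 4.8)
The plan is to reduce the statement to the scalar-product criterion recorded after \eqref{eq:Ccu_scalar_products}, via the standard correspondence between the faces of a cone over a polytope and the faces of that polytope. First I would observe that $\Ccu_n(\sigma^2) = \pos(Q)$, where $Q := [-1,1]^n + \sigma \eee_{n+1}$ is the shifted cube lying in the hyperplane $\{x_{n+1} = \sigma\}$, which avoids the origin. For such a cone the nonzero faces are precisely the sets $\pos(G)$ with $G$ a face of $Q$, and adjoining the radial direction raises the dimension by one, so $\dim \pos(G) = \dim G + 1$. Consequently the $k$-dimensional faces of $\Ccu_n(\sigma^2)$ correspond exactly to the $(k-1)$-dimensional faces of the cube.

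Next I would exploit the symmetries of the cube. Every permutation of $\eee_1, \ldots, \eee_n$ and every sign flip $\eee_i \mapsto -\eee_i$ extends to an isometry of $\R^{n+1}$ fixing $\eee_{n+1}$, and each such map preserves $\Ccu_n(\sigma^2)$ because the defining inequality $x_{n+1} \geq \sigma \max_{1\le i \le n}|x_i|$ is invariant under it. These maps act transitively on the $(k-1)$-faces of the cube, so up to isometry it suffices to treat one representative face, namely the one with free coordinates $1, \ldots, k-1$ and the remaining coordinates fixed to $+1$. The corresponding $k$-face $F$ of the cone is spanned by the vectors
$$
w_\eps := \sigma \eee_{n+1} + \sum_{i=1}^{k-1} \eps_i \eee_i + \sum_{j=k}^{n} \eee_j, \qquad \eps \in \{-1,1\}^{k-1}.
$$

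Finally I would compute the pairwise scalar products. For $\eps, \eta \in \{-1,1\}^{k-1}$ one gets
$$
\langle w_\eps, w_\eta \rangle = \sigma^2 + \sum_{i=1}^{k-1}\eps_i \eta_i + \sum_{j=k}^{n} 1 = \left(\sigma^2 + n - k + 1\right) + \langle \eps, \eta\rangle,
$$
which is exactly the form \eqref{eq:Ccu_scalar_products} with parameter $\sigma^2 + n - k + 1$ and index set $\{-1,1\}^{k-1}$. By the isometry criterion stated there, $F$ is isometric to $\Ccu_{k-1}(\sigma^2 + n - k + 1)$, as claimed; the boundary cases $k = 1$ and $k = n+1$ are covered by the same formula. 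I expect the only genuinely delicate point to be the face correspondence in the first step --- verifying that every $k$-face really comes from a $(k-1)$-face of $Q$ with the dimension shifting by exactly one --- but this is a standard property of cones over polytopes whose base hyperplane misses the apex, and it was already invoked in the proof of Lemma \ref{lem:face_Ccu_nomal_cone}.
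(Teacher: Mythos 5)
Your proposal is correct and follows essentially the same route as the paper's proof: reduce to the single representative face spanned by the vectors $\sigma\eee_{n+1}+\sum_{i=1}^{k-1}\eps_i\eee_i+\sum_{i=k}^{n}\eee_i$, compute the pairwise scalar products $\sigma^2+\langle\eps,\eta\rangle+n-k+1$, and invoke the Gram-matrix criterion from~\eqref{eq:Ccu_scalar_products}. The only difference is that you spell out the cone-over-polytope face correspondence and the transitive action of the hyperoctahedral symmetries, which the paper compresses into the assertion that all $k$-faces are isometric; this is a welcome filling-in of detail, not a different argument.
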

\begin{proof}
	Since all the $k$-faces of $\Ccu_n(\sigma^2)$ are isometric, it is sufficient to analyse
	$$
	F:=\pos \left\{ \sigma \eee_{n+1} + \sum_{i=1}^{k-1}\eps_i \eee_i + \sum_{i=k}^{n}\eee_i : \eps_i \in \{-1,1\} \right\}.
	$$
	Denoting the vectors spanning $F$ by
	$$
	v_\eps := \sigma \eee_{n+1} + \sum_{i=1}^{k-1}\eps_i \eee_i + \sum_{i=k}^{n}\eee_i, \qquad \eps=(\eps_1,\ldots,\eps_{k-1})\in \{-1,1\}^{k-1},
	$$
	we have the scalar products
	\begin{align*}
	\langle v_\eps,v_\eta \rangle
	=
	\sigma^2 + \langle \eps,\eta \rangle + n-k+1
	\end{align*}
	for every $\eps,\eta \in \{-1,1\}^{k-1}$.
Since these coincide with the ones of the vectors spanning $\Ccu_{k-1}(\sigma^2+n-k+1)$, see~\eqref{eq:Ccu_scalar_products}, the claimed isometry holds.
\end{proof}

\begin{proof}[Proof of Theorem \ref{thm:intrinsic_volumes}]
	First let $k\in \{ 1,\ldots,n\}$.
	Let $F$ be a $k$-face of $\Ccu_n(\sigma^2)$.
	By Lemmas \ref{lem:face_Ccu_nomal_cone} and \ref{lem:face_Ccube=Ccube} we have
	\begin{align*}
	\alpha(N_F(\Ccu_n(\sigma^2)))=\gs_{n-k+1}\left(\frac{1}{\sigma^2}\right), \\
	\alpha(F)=\gcu_{k-1}(\sigma^2+n-k+1) .
	\end{align*}
	By definition, the $k$\textsuperscript{th} intrinsic volume of $\Ccu_n(\sigma^2)$ is
	$$
	\upsilon_k(\Ccu_n(\sigma^2))=\sum_{F\in\cF_k(\Ccu_n(\sigma^2))} \alpha(F) \alpha(N_F(\Ccu_n(\sigma^2))) .
	$$
	All the $k$-faces of $\Ccu_n(\sigma^2)$ are isometric and by the construction of the cone there is a natural one-to-one-correspondence between the $k$-faces of $\Ccu_n(\sigma^2)$ and the $(k-1)$-faces of the $n$-dimensional cube $[-1,1]^n$. Hence the cone $\Ccu_n(\sigma^2)$ has $2^{n-k+1} \binom{n}{k-1}$ $k$-faces and its intrinsic volume is
	$$
	\upsilon_k\left( \Ccu_n(\sigma^2) \right)
	=
	2^{n-k+1} \binom{n}{k-1} \gcu_{k-1}\left(\sigma^2+n-k+1\right) \gs_{n-k+1}\left( \frac{1}{\sigma^2} \right) .
	$$
	Now we are coming to the remaining cases.
Since $\upsilon_{n+1}(\Ccu_n(\sigma^2))$ is just the solid angle of $\Ccu_n(\sigma^2)$, we have
$$
\upsilon_{n+1}(\Ccu_n(\sigma^2))
=
\gcu_n(\sigma^2).
	$$
Recalling that $\gs_0(\cdot) \equiv 1$, this gives that \eqref{eq:Ccu_intrinsic_volume} also holds for $k=n+1$. In the case  $k=0$ we observe that the only $0$-dimensional face of $\Ccu_n(\sigma^2)$ is $\{0\}$ and hence,  by Proposition \ref{prop:duality_cones},
	$$
	\upsilon_0(\Ccu_n(\sigma^2))
	=
	\alpha(\{0\}) \cdot \alpha(N_{\{0\}}(\Ccu_n(\sigma^2)))
	=
	1\cdot \alpha\left((\Ccu_n(\sigma^2))^\circ\right)
	=
	\gcr_n\left(\frac{1}{\sigma^2}\right).
	$$

The conic intrinsic volumes of $\Ccr_n(\sigma^2)$ can be obtained by polarity. It is known, see~\cite[Fact 5.5(2)]{amelunxen}, that for every $m$-dimensional cone $C$ the $j$\textsuperscript{th} intrinsic volume of its polar cone $C^\circ$ is
	\begin{align}
	\upsilon_j(C^\circ)=\upsilon_{m-j}(C), \qquad j=0,\ldots,m.
	\end{align}
	Since by Theorem \ref{prop:duality_cones} we have that $\Ccr_n(\sigma^2)\cong \left( \Ccu_n \left(  \frac{1}{\sigma^2} \right) \right)^\circ$, it follows that
	$$
	\upsilon_k\left( \Ccr_n(\sigma^2) \right)
	=
	\upsilon_{n+1-k}\left( \Ccu_n \left( \frac{1}{\sigma^2} \right) \right)
	=
	2^{k} \binom{n}{k} \gcu_{n-k}\left(\frac{1}{\sigma^2}+k\right) \gs_{k}\left( \sigma^2 \right),
	$$
	for all $0\leq k \leq n$. In the remaining case $k=n+1$ we have
	$$
	\upsilon_{n+1}(\Ccr_n(\sigma^2))
	=
	\upsilon_{0}\left(\Ccu_n \left( \frac{1}{\sigma^2} \right) \right)= \gcr_n(\sigma^2),
	$$
which completes the proof.
\end{proof}

\subsection{Absorption probabilities - Proofs}
In this section we will express the non-absorption probabilities as the probability that certain  deterministic cone $C$ and a random linear space $L$ intersect trivially.
The latter probability can be computed by means of the conic Crofton formula which is an important tool in this and the next section.
	\begin{theorem}[Conic Crofton formula] \label{thm:conic_crofton}
		Let $C\subset \R^n$ be a convex cone which is not a linear subspace and let $L$ be a random linear subspace that is chosen uniformly from the Grassmannian $Gr(n-l, \R^n)$, i.e.\ the set of all linear subspaces of $\R^n$ that have codimension $l\in \{0,\ldots,n\}$. Then
		\begin{align*}
		\P (C\cap L=\{0\})=2(\upsilon_{l-1}(C)+\upsilon_{l-3}(C)+\ldots),\\
		\P (C\cap L\neq \{0\})=2(\upsilon_{l+1}(C)+\upsilon_{l+3}(C)+\ldots).
		\end{align*}
	\end{theorem}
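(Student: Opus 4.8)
The plan is to derive both identities from two standard pillars of conic integral geometry: the conic kinematic (Crofton) formula for intrinsic volumes, and the conic Gauss--Bonnet relation. Throughout, $L$ denotes the uniform random subspace of dimension $n-l$, and $C\cap L$ is regarded as a random cone living inside $L\cong\R^{n-l}$.

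First I would record the Crofton formula for the intrinsic volumes themselves. Applying the conic kinematic formula (see \cite[Section~6.5]{SW08} or \cite{amelunxen}) to $C$ and a \emph{fixed} subspace $L_0$ of dimension $n-l$, whose only non-vanishing conic intrinsic volume is $\upsilon_{n-l}(L_0)=1$, and writing $L=QL_0$ for a Haar-random rotation $Q$ (so that $L$ is uniform on $Gr(n-l,\R^n)$), every term of the kinematic sum but one collapses and one obtains
$$
\E\left[\upsilon_k(C\cap L)\right]=\upsilon_{k+l}(C),\qquad k\geq 1.
$$
For $k>n-l$ both sides vanish for dimensional reasons, so the identity carries no information there.

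Next I would pass from intrinsic volumes to the indicator of a non-trivial intersection using Gauss--Bonnet. The conic Gauss--Bonnet relation (\cite{amelunxen}) asserts that $\sum_k(-1)^k\upsilon_k(K)=0$ for every cone $K$ that is \emph{not} a linear subspace, equivalently $\sum_{k\text{ odd}}\upsilon_k(K)=\tfrac12$; whereas for $K=\{0\}$ one has $\upsilon_0=1$ and all higher $\upsilon_k=0$, so the odd sum is $0$. The crucial geometric point is that, almost surely, the random cone $C\cap L$ is either $\{0\}$ or is not a subspace; granting this,
$$
\sum_{k\geq 1,\ k\text{ odd}}\upsilon_k(C\cap L)=\tfrac12\,\indi{C\cap L\neq\{0\}}\quad\text{a.s.}
$$
Taking expectations and substituting the Crofton identity gives
$$
\tfrac12\,\P(C\cap L\neq\{0\})=\sum_{k\geq 1,\ k\text{ odd}}\upsilon_{k+l}(C)=\upsilon_{l+1}(C)+\upsilon_{l+3}(C)+\ldots,
$$
which is the second assertion of the theorem. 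The first then follows from the complementary event: since $C$ itself is not a subspace, Gauss--Bonnet applied to $C$ yields $2\sum_{k\not\equiv l\,(2)}\upsilon_k(C)=1$, whence
$$
\P(C\cap L=\{0\})=1-2\bigl(\upsilon_{l+1}(C)+\ldots\bigr)=2\bigl(\upsilon_{l-1}(C)+\upsilon_{l-3}(C)+\ldots\bigr),
$$
using that $\{l-1,l-3,\ldots\}\cup\{l+1,l+3,\ldots\}=\{k:k\not\equiv l\ (2)\}$ is a disjoint decomposition.

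I expect the main obstacle to be the almost-sure dichotomy invoked above, namely that a non-trivial intersection $C\cap L$ is almost never a linear subspace (otherwise its odd intrinsic-volume sum would equal $\indi{\dim(C\cap L)\text{ odd}}$ rather than $\tfrac12$). For a \emph{pointed} cone $C$ --- which covers every cone occurring in this paper, since $\Cs_n(r)$, $\Ccr_n(\sigma^2)$ and $\Ccu_n(\sigma^2)$ are all pointed --- the intersection $C\cap L$ is again pointed and hence is a subspace only when it is $\{0\}$, so the dichotomy is immediate. For a general non-subspace $C$ one splits off the lineality space, $C=W\oplus C_0$ with $C_0$ pointed, and verifies that $C\cap L$ fails to be a subspace for all $L$ outside a set of measure zero; this transversality check is the only genuinely delicate point, and is where I would concentrate the technical effort.
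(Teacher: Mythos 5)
The paper contains no proof of Theorem~\ref{thm:conic_crofton} at all: it is quoted as a known result (the quantities $\P(C\cap L\neq\{0\})$ being the classical Grassmann angles), with the background delegated to \cite[Section 6.5]{SW08} and \cite{amelunxen}, so there is no internal argument to compare yours against. Your derivation is correct, and it is essentially the standard one in the conic integral geometry literature: the kinematic/Crofton identity $\E[\upsilon_k(C\cap L)]=\upsilon_{k+l}(C)$ for $k\geq 1$ (your collapse of the kinematic sum against $\upsilon_j(L_0)=\delta_{j,n-l}$ is right, as is the caveat that $k=0$ must be excluded), combined with conic Gauss--Bonnet, which converts the odd-index sum for the random cone $C\cap L$ into $\tfrac12\,\indi{C\cap L\neq\{0\}}$ almost surely. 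The complementary identity is also handled correctly: $\{l-1,l-3,\ldots\}$ and $\{l+1,l+3,\ldots\}$ partition the index set of parity opposite to $l$, and for the non-subspace cone $C$ each parity class of intrinsic volumes sums to $\tfrac12$.

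The one step you flag as delicate --- that almost surely $C\cap L$ is either $\{0\}$ or not a linear subspace --- is indeed where the argument could fail, but it holds for every convex cone $C$ that is not a subspace, and it can be closed with exactly the general-position fact the paper invokes elsewhere (Goodey--Schneider, or \cite[Lemma 13.2.1]{SW08}, used in the proof of Proposition~\ref{prop:general_position_almost_surely}). Let $W=C\cap(-C)$ be the lineality space, $w=\dim W$, and $C_0=C\cap W^{\perp}$ the pointed part, so that $C=W\oplus C_0$ with $C_0\neq\{0\}$; note that any subspace contained in $C\cap L$ lies in $W$, so $C\cap L$ is a subspace if and only if $C\cap L\subseteq W$. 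If $w<l$, then a.s.\ $L\cap W=\{0\}$, so any subspace contained in $C\cap L$ is trivial and the dichotomy holds. If $w\geq l$, then a.s.\ $\dim(L\cap W)=w-l$, hence the orthogonal projection of $L$ onto $W^{\perp}$ has dimension $(n-l)-(w-l)=n-w$ and equals $W^{\perp}$; choosing $x\in L$ whose projection is a nonzero point $c_0\in C_0$ gives $x\in C\cap L\setminus W$, so $C\cap L$ is neither $\{0\}$ nor a subspace (consistently, in this regime the stated formula returns probability one). Your shortcut via pointedness does suffice for all of the paper's applications, since $\Cs_n(r)$, $\Ccr_n(\sigma^2)$ and $\Ccu_n(\sigma^2)$ are pointed, but with the two cases above your proof covers the theorem in the generality in which it is stated.
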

	The probabilities $\P (C\cap L\neq \{0\})$ are known as the Grassmann angle $\gamma_l(C)$.
	\begin{proof}[Proof of Theorem \ref{thm:p_symm}]
By the symmetry of the standard normal distribution we have
\begin{equation*}
		\psym_{n,d}(\sigma^2)
		=
		\P(\sigma X\notin \Psym_{n,d}) = \P(-\sigma X\notin \Psym_{n,d})
		=
		\P(0\notin \conv \{\pm X_1+\sigma X,\ldots,\pm X_n+\sigma X\}).
\end{equation*}
 By the definition of convex hulls, the event $0\notin \conv \{ \pm X_1+\sigma X,\ldots,\pm X_n+\sigma X \}$ occurs if and only if
$$
0=\sum_{i=1}^n \alpha_i X_i+\alpha_{n+1}  X
$$
for $\alpha_1,\ldots,\alpha_n,\alpha_{n+1}\in\R$ and $\alpha_{n+1} \geq \sigma \sum_{i=1}^n|\alpha_i|$
implies that $\alpha_1=\ldots =\alpha_n=\alpha_{n+1} = 0$.
Taking everything together, we arrive at the identity
\begin{align}
\psym_{n,d}(\sigma^2)
=
\P(C\cap U= \{0\} )\label{eq:C_cap_U=0}
\end{align}
		with
		\begin{align}
		U:&= \left\{ (y_1,\ldots,y_{n+1}) \in \R^{n+1} : \sum_{i=1}^n y_i X_i+y_{n+1} X=0 \right\}, \\
		C:&= \left\{ (\alpha_1,\ldots,\alpha_{n+1})\in\R^{n+1} : \alpha_{n+1} \ \geq \sigma \sum_{i=1}^n |\alpha_i| \right\} . \label{eq:def_C_symm}
		\end{align}
Note that $U$ is a random linear subspace of $\R^{n+1}$ that has codimension $d$ a.s. Moreover, from the rotational symmetry of the standard normal distribution it follows that $U$ is	 	uniformly distributed on the corresponding linear Grassmannian.
		By \eqref{eq:Ccr_alt_form}, we have $C\cong \Ccr_n(\sigma^2)$ and hence by the conic Crofton formula (Theorem \ref{thm:conic_crofton}) the probability \eqref{eq:C_cap_U=0} takes the form
		$$
		\psym_{n,d}(\sigma^2)=2 ( \upsilon_{d-1} (C)+\upsilon_{d-3}(C)+\ldots )
		=2 ( \upsilon_{d-1} (\Ccr_n(\sigma^2))+\upsilon_{d-3}(\Ccr_n(\sigma^2))+\ldots ).
		$$
Plugging in the expressions from Theorem \ref{thm:intrinsic_volumes} completes the proof.
	\end{proof}

\begin{proof}[Proof of Theorem \ref{thm:p_zono}]
	As in the proof of Theorem \ref{thm:p_symm}  we have
	$$
	\pzono_{n,d}(\sigma^2)= \P(\sigma X \notin \Pzono_{n,d}) = \P(0 \notin \Pzono_{n,d}+\sigma X).
	$$
	Noting that $$\Pzono_{n,d}+\sigma X= \left\{ \sum_{i=1}^{n}\tilde \alpha_i X_i + \sigma X : \tilde \alpha_1,\ldots,\tilde \alpha_n \in [-1,1] \right\}$$ we can rewrite the event $\{0\notin \Pzono_{n,d}+\sigma X\}$ in the following way:
	\begin{align*}
	0 \notin P_{n,d}+\sigma X
	&\Leftrightarrow
	\text{ for all } \tilde \alpha_1,\ldots,\tilde \alpha_n\in [-1,1]: \sum_{i=1}^{n} \tilde \alpha_i X_i + \sigma X \neq 0 \\
	&\Leftrightarrow
	\text{ for all } \lambda>0, \tilde \alpha_1,\ldots,\tilde \alpha_n\in [-1,1] : \sum_{i=1}^{n} \lambda \tilde \alpha_i X_i + \lambda \sigma X \neq 0 .
	\end{align*}
	Denoting $\alpha_i:=\lambda \tilde \alpha_i$ and  $\alpha_{n+1} := \lambda \sigma$ and including the case $\alpha_{n+1}=0$ this event is equivalent to the statement
	$$
	\left( \sum_{i=1}^{n} \alpha_i X_i + \alpha_{n+1} X = 0 \text{ with } \alpha_{n+1}\geq 0 \text{ and } \max_{i=1,\ldots,n} |\alpha_i| \leq \frac{\alpha_{n+1}}{\sigma} \right) \Rightarrow \alpha_1=\ldots=\alpha_{n+1}=0.
	$$
	Thus $\pzono_{n,d}(\sigma^2)$ has the form
	\begin{align}
	\pzono_{n,d}(\sigma^2)=\P(C\cap U=\{0\})
	\end{align}
	with
	\begin{align}
	U:&= \left\{ (y_1,\ldots,y_{n+1}) \in \R^{n+1} : \sum_{i=1}^n y_i X_i+y_{n+1} X=0 \right\}, \\
	C:&= \left\{ (\alpha_1,\ldots,\alpha_{n+1})\in\R^{n+1} : \alpha_{n+1}  \geq \sigma \max_{i=1,\ldots,n} |\alpha_i| \right\} . \label{eq:def_zono}
	\end{align}
Note that $U$ is the same as in the proof of Theorem \ref{thm:p_symm}. Thus, $U$ is a random linear subspace of $\R^{n+1}$ that has codimension $d$ and is uniformly distributed on the corresponding linear Grassmannian.
Also, $C \cong \Ccu_n(\sigma^2)$ by \eqref{eq:Ccu_alt_form} and hence by the conic Crofton formula this probability equals
	$$
	\pzono_{n,d}(\sigma^2)= 2 (\upsilon_{d-1}(\Ccu_n(\sigma^2))+\upsilon_{d-3}(\Ccu_n(\sigma^2))+\ldots).
	$$
	Plugging in the expressions from Theorem \ref{thm:intrinsic_volumes} completes the proof.
\end{proof}

\subsection{Proof of Proposition \ref{Prop:Random_sections_first_subsection}}\label{sec:proof_intersection}
The main concept we need in the proof of \eqref{eq:random_sections_first_identity} is the general position of a linear subspace with respect to a finite set of affine subspaces. This notion is defined in the following way.

Let $S_1, \ldots, S_k\subset \R^n$ be affine subspaces.
An affine subspace $S\subset \R^n$ having codimension $\codim S = l$ is said to be in general position with respect to $S_1,\ldots,S_k$ if for every choice of indices $I\subset \{1,\ldots, k\}$ the intersection $S\cap \left( \bigcap_{i\in I} S_i \right)$ either is empty if $\dim \left( \bigcap_{i\in I} S_i \right)<l$ or is an affine subspace of dimension $\dim \left( \bigcap_{i\in I} S_i \right)-l$ otherwise.

The next proposition implies that for any finite collection of affine subspaces $S_1,\ldots,S_k$ that are not linear subspaces, with probability $1$ a random linear subspace $L$ which is chosen uniformly from the Grassmannian of all linear subspaces with codimension $l$ is in general position with respect to $S_1,\ldots,S_k$. It is sufficient to prove this fact for a single affine subspace $A$ because every intersection $\bigcap_{i\in I} S_i$ is an affine subspace.
\begin{proposition} \label{prop:general_position_almost_surely}
Fix $n\in \N$ and $l\in \{0,\ldots,n\}$. 	Let $L$ be a random linear subspace of $\R^n$ chosen uniformly from the set of all $(n-l)$-dimensional linear subspaces of $\R^n$ and let $A\subset \R^n$ be an affine linear subspace of $\R^n$ that is not a linear subspace.
	Then $L$ almost surely is in general position with respect to $A$.
	\end{proposition}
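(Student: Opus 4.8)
The plan is to represent the random subspace $L$ as the kernel of a Gaussian matrix, turning the geometric statement into a rank computation. Let $H$ be an $l\times n$ matrix with i.i.d.\ standard Gaussian entries and set $L:=\ker H$. The rows of $H$ are a.s.\ linearly independent, so $\dim L=n-l$ a.s.; and since the Gaussian law is rotation invariant, $HR^{\top}$ has the same distribution as $H$ for every $R\in O(n)$, whence $R\ker H=\ker(HR^{\top})\stackrel{d}{=}\ker H$. By uniqueness of the rotation-invariant probability measure on $\Gr(n-l,\R^n)$, this $L$ has exactly the required distribution, so it suffices to analyse $\ker H\cap A$.

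Write $A=a_0+W$, where $W$ is the direction space of $A$ and $d:=\dim A=\dim W$. The hypothesis that $A$ is not a linear subspace means $0\notin A$, i.e.\ $a_0\notin W$; decompose $a_0=a_W+a_\perp$ with $a_W\in W$ and $0\neq a_\perp\in W^{\perp}$. Fixing an orthonormal basis $w_1,\dots,w_d$ of $W$, the restriction $H|_W$ is the $l\times d$ matrix with entries $\langle h_i,w_j\rangle$ (the $h_i$ being the rows of $H$), and these entries are again i.i.d.\ standard Gaussian, so $H|_W$ is itself a Gaussian matrix. Now $x\in \ker H\cap A$ iff $x=a_0+w$ with $w\in W$ and $Hw=-Ha_0$, i.e.\ iff the coordinate vector $c$ of $w$ solves the linear system $(H|_W)\,c=-Ha_0$.

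Two cases, distinguished by the generic rank, finish the argument. If $d\geq l$, then $H|_W$ has full row rank $l$ a.s., so the system is solvable for every right-hand side and its solution set has dimension $d-l$; hence $\ker H\cap A$ is a nonempty affine subspace of dimension $d-l=\dim A-l$, as general position requires. If $d<l$, then on $W':=W\oplus\R a_\perp$ the Gaussian matrix $H|_{W'}$ (of size $l\times(d+1)$ with $d+1\le l$) has full column rank a.s., i.e.\ is injective; this forces $Ha_\perp\notin H(W)$, so $Ha_0=Ha_W+Ha_\perp\notin H(W)$, the system is inconsistent, and $\ker H\cap A=\emptyset$ — again the required conclusion. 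This is precisely where the hypothesis $a_\perp\neq 0$ (equivalently, $A$ not linear) enters.

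The only genuinely probabilistic input is that a matrix with i.i.d.\ Gaussian entries attains its maximal possible rank almost surely, and making this rigorous is the main (though standard) obstacle: the rank-deficient matrices form the zero set of a nonzero polynomial (a suitable maximal minor) and hence a Lebesgue-null set, to which the absolutely continuous Gaussian law assigns mass zero; equivalently, one shows by induction that a standard Gaussian vector almost surely avoids any fixed proper subspace. Everything else is deterministic linear algebra, and the single-subspace conclusion obtained here is exactly what the reduction in the text requires in order to deduce general position of $L$ with respect to an arbitrary finite collection $S_1,\dots,S_k$.
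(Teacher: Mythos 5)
Your proof is correct, but it takes a genuinely different route from the paper's. The paper invokes the known fact (Goodey--Schneider, and Schneider--Weil, Lemma 13.2.1) that a uniformly distributed subspace is a.s.\ in general position with respect to any fixed \emph{linear} subspace, applies this to $\lin(A)$, and then bridges the gap between $\lin(A)$ and $A$ by a purely deterministic argument: since $0\notin A$ one has $\dim\lin(A)=\dim A+1$, the case $\dim A<l$ is handled via $A\cap L\subset\lin(A)\cap L=\{0\}$ and $0\notin A$, and in the case $\dim A\geq l$ the paper proves the identity $\lin(A)\cap L=\lin(A\cap L)$ to transfer the dimension count. You instead realize $L$ as $\ker H$ for an $l\times n$ Gaussian matrix (correctly justified via rotational invariance of the Gaussian law and uniqueness of the invariant probability measure on $\Gr(n-l,\R^n)$) and translate general position into the solvability of the random linear system $(H|_W)c=-Ha_0$, which you settle by the a.s.\ maximal rank of Gaussian matrices; note that your auxiliary space $W\oplus\R a_\perp$ is exactly $\lin(A)$, so your case split silently shadows the paper's reduction, but the probabilistic input is different and self-contained. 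What each approach buys: yours needs no external general-position lemma (only that zero sets of nonzero polynomials are null for absolutely continuous laws), makes the role of the hypothesis ``$A$ is not linear'' completely explicit through $a_\perp\neq 0$, and gets nonemptiness of $A\cap L$ in the case $\dim A\geq l$ for free from surjectivity of $H|_W$ — a point that in the paper emerges only implicitly from the identity $\lin(A)\cap L=\lin(A\cap L)$; the paper's route is shorter given the literature and isolates a reusable deterministic implication (general position with respect to $\lin(A)$ implies general position with respect to $A$) that is independent of the distributional model.
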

	\begin{proof}
		The way $L$ is constructed it almost surely is in general position with respect to any fixed deterministic \textit{linear} subspace $M$. Proofs for this statement are given by Goodey and Schneider in \cite[Lemma 2.1]{goodey_schneider} and by Schneider and Weil in \cite[Lemma 13.2.1]{SW08}.
		
		Thus $L$ almost surely is in general position with respect to $\lin(A)$, the linear hull of $A$, i.e.\ almost surely the dimension of $\lin(A)\cap L$ is either $0$, if $\dim (\lin (A))\leq  l$, or it is $\dim(\lin(A))-l$ otherwise. Note that this definition of general position of linear subspaces slightly differs from the one of general position of a linear subspace with respect to an affine subspace.
		
		Unfortunately, general position of $L$ with respect to $\lin(A)$ is not equivalent to general position of $L$ with respect to $A$, as one can see by analysing two parallel lines in $\R^3$ with one of them containing $0$ and thus being a linear subspace.
		But to prove the proposition it is sufficient to show only one of the implications:
		we will prove that the general position of $L$ with respect to $\lin(A)$ implies the general position of $L$ with respect to $A$.
		
		First assume that $L$ is in general position with respect to $\lin(A)$ and $\dim(A)< \codim(L)=l$. Since $A$ is not a linear subspace, $0\notin A$ and thus $\dim(\lin(A))=\dim(A)+1$.
		Hence we have $\dim (\lin (A))\leq  \codim (L)$ and by definition we also have $\dim(\lin(A)\cap L)=0$ and thus $\lin(A)\cap L=\{0\}$.
		Thereby we obtain $A\cap L \subset \lin(A)\cap L=\{0\}$. Since $A$ is an affine subspace, $0\notin A$ and thus $0 \notin A\cap L$, we arrive at $A\cap L =\emptyset$.
		By definition this means that in this case $L$ is in general position with respect to $A$.
		
		Now assume that $\dim(A) \geq \codim(L)=l$, while $L$ is in general position with respect to $\lin(A)$.
		Since $L$ is in general position with respect to $\lin(A)$, the dimension of their intersection is $\dim(\lin(A)\cap L)=\dim(\lin(A))-\codim(L)$ and thus
		\begin{align*}
		\dim(A\cap L)
		=
		\dim(\lin(A\cap L))-1
		=
		\dim(\lin(A)\cap L)-1
		&=
		\dim(\lin(A))-\codim(L)-1\\
		&=
		\dim(A)-\codim(L),
		\end{align*}
		which gives general position.
		In the first step we used that $A\cap L$ is an affine subspace that is not a linear subspace. The identity $\lin(A)\cap L=\lin(A\cap L)$ that we used in the second step, can be proved the following way.
		
		Since $A$ is an affine subspace that is not a linear subspace, we can write $\lin(A)= \bigcup \{A\lambda:\lambda \in \R\}$.
		Thus for any $x\in \lin(A)\cap L$ with $x\neq 0$ there is an $a\in A$ and a real $\lambda\neq 0$ such that $x=\lambda a$. Hence we have $\frac{x}{\lambda}\in A$ and since $L$ is a linear subspace $\frac{x}{\lambda}\in L$. As a result we have $\frac{x}{\lambda}\in A\cap L$ and thus $x\in \lin(A\cap L)$. In the formally excluded case $x=0$ we trivially have $x\in \lin(A\cap L)$. The other inclusion holds trivially: $\lin(A\cap L)$ is the smallest linear subspace containing $A\cap L$ and since $A\cap L\subset \lin(A)\cap L$ we conclude that $\lin(A\cap L)\subset \lin(A)\cap L$.
	\end{proof}

To prove \eqref{eq:random_sections_first_identity} we will show the following three statements.
\begin{proposition} \label{prop:random_sections_first_identity}
Let $P\subset \R^n$ be an $n$-dimensional polytope that contains the origin in its interior, and let $S \subset R^n$ be a deterministic linear subspace having codimension $l\in \{1,\ldots,n-1\}$ which is in general position with respect to $\{\aff(F):F\in\cF(P)\}$. Here, $\cF(P)$ is the set of all faces of $P$.
Then the following three statements hold for every $j\in \{0, \ldots , n-l-1\}$.
		\begin{enumerate}
			\item \label{item:random_sections_first_identity_1}
			Let $B_1, B_2 \in \cF_{j+l}(P)$ be two $(j+l)$-faces of $P$ with $B_1\cap S \neq \emptyset$ and $B_1\neq B_2$.  Then
			$$
			B_1\cap S \neq B_2\cap S.
			$$
			\item  \label{item:random_sections_first_identity_2}
			Let $B\in \cF_{j+l}(P)$ be a $(j+l)$-face of P with $B\cap S \neq \emptyset$. Then the intersection of $B$ and $S$ is a $j$-face of $S\cap P$, i.e.
			$$
			B\cap S \in \cF_j(S\cap P).
			$$
			\item \label{item:random_sections_first_identity_3}
			Let $A \in \cF_j(S\cap P)$ be a $j$-face of the intersection of $S$ and $P$. Then there is a $(j+l)$-face $B\in \cF_{j+l}(P)$ of $P$ such that
			\begin{align} \label{eq:in_item:random_sections_first_identity_3}
				B \cap S=A.
			\end{align}
		\end{enumerate}
	\end{proposition}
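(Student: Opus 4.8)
The plan is to regard $Q:=S\cap P$, a polytope of dimension $n-l$ since $0\in S\cap\interior P$, and to show that the assignment $B\mapsto B\cap S$ is a bijection from $\{B\in\cF_{j+l}(P):B\cap S\neq\emptyset\}$ onto $\cF_j(Q)$. In this language statement (2) is well-definedness (the image really is a $j$-face of $Q$), statement (1) is injectivity, and statement (3) is surjectivity. Throughout I would use three standard facts about polytopes: every point lies in the relative interior of a unique face (its carrier face); if a face contains a relative-interior point of another face, then it contains that face; and distinct faces have disjoint relative interiors. I would also record the elementary half of the face correspondence, which uses no general position: if $F$ is a face of $P$, written $F=\{x\in P:\phi(x)=c\}$ with $\phi\le c$ on $P$, then $F\cap S=Q\cap\{\phi=c\}$ and $\phi\le c$ on $Q$, so $F\cap S$ is a face of $Q$.

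Combining this with general position gives the dimension bookkeeping I need. Whenever $\relint F\cap S\neq\emptyset$, the set $\relint F\cap S=\relint F\cap\relint S$ is nonempty and relatively open inside the flat $\aff F\cap S$, whose dimension is $\dim F-l$ by general position; hence $F\cap S$ is a face of $Q$ of dimension $\dim F-l$ with $\relint(F\cap S)=\relint F\cap S$. Statements (3) and (1) then follow via carrier faces. For (3), take $x\in\relint A$ for $A\in\cF_j(Q)$; then $x\in S$, so its carrier face $B$ in $P$ has $\relint B\cap S\neq\emptyset$, whence $B\cap S$ is a face of $Q$ with $x\in\relint(B\cap S)$; since also $x\in\relint A$, disjointness of relative interiors forces $B\cap S=A$ and then $\dim B=\dim A+l=j+l$. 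For (1), suppose $B_1\cap S=B_2\cap S=:A\neq\emptyset$ with $B_1,B_2\in\cF_{j+l}(P)$; pick $x\in\relint A$ and let $C$ be its carrier face. Then $C\subseteq B_1$ and $C\subseteq B_2$, while $C\cap S=A$ gives $\dim C=\dim A+l$; using $\dim A=j$ from (2) yields $\dim C=j+l=\dim B_1=\dim B_2$, so $C=B_1=B_2$, contradicting $B_1\neq B_2$.

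The real work is statement (2): for a $(j+l)$-face $B$ with $B\cap S\neq\emptyset$ I must show $\dim(B\cap S)=j$, i.e.\ that $S$ meets $\relint B$ and not merely a lower-dimensional subface. This is the main obstacle. Passing to $\aff B$ and noting that general position is inherited by $T:=\aff B\cap S$ relative to the faces of $B$ (of codimension $l$ inside $\aff B$), it reduces to the following claim: if $K\subset\R^m$ is a full-dimensional polytope and $T$ an affine flat of codimension $l$ in general position with respect to $\{\aff G:G\in\cF(K)\}$, then $T\cap K\neq\emptyset$ implies $T\cap\interior K\neq\emptyset$. I would prove this by induction on $\dim K$, the base case $\dim K=l$ being immediate: a single intersection point lying on $\partial K$ would lie in a face of dimension $<l$, which general position forbids from meeting $T$.

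For the inductive step, suppose $T\cap\interior K=\emptyset$. A supporting hyperplane $H\supseteq T$ of $K$ yields a proper face $G=K\cap H$ with $T\cap K=T\cap G$; applying the induction hypothesis inside $\aff G$ produces a point $y\in T\cap\relint G$. The decisive point is then to move $y$ into $\interior K$ along $T$. Let $W$ be the linear space parallel to $\aff G$, so $\dim W=\dim G$, and let $\pi$ be orthogonal projection onto $W^\perp$. At $y\in\relint G$ the tangent cone $T_yK$ has lineality space exactly $W$ and is the preimage $\pi^{-1}(\overline C)$ of a full-dimensional pointed cone $\overline C\subset W^\perp$ with nonempty interior, so $\interior(T_yK)=\pi^{-1}(\interior\overline C)$. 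General position gives $\dim(T\cap\aff G)=\dim G-l$, which forces the linear space $V$ parallel to $T$ to satisfy $\dim\pi(V)=(m-l)-(\dim G-l)=\dim W^\perp$, i.e.\ $\pi(V)=W^\perp$. Hence $\pi(V)$ meets the open nonempty set $\interior\overline C$, so there is $v\in V$ with $v\in\interior(T_yK)$; since $y\in T$ and $v$ is parallel to $T$, we get $y+\eps v\in T\cap\interior K$ for small $\eps>0$, contradicting $T\cap\interior K=\emptyset$. This establishes the claim, hence (2), and completes the proof.
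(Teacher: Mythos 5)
Your proposal is correct, and its geometric core coincides with the paper's: your pushing argument at $y\in T\cap\relint G$ --- computing $\dim(V\cap W)=\dim G-l$ from general position, concluding $\pi(V)=W^\perp$, and hence finding $v\in V\cap \interior(T_yK)$ with $y+\eps v\in T\cap\interior K$ --- is exactly the content of the paper's Lemma \ref{lem:random_sections_S_cap_intQ_not_empty}, where the same computation appears as the decomposition $\R^n=V_0+V_1+V_2$ with $V_0=\lin(F)\cap S$ and the projection of an interior point of the tangent cone onto $V_2\subset S$. The packaging differs in two ways, both in your favor on one count and against you on the other. First, your induction on $\dim K$ with a supporting-hyperplane descent is avoidable: the paper obtains the needed point $y\in S\cap\relint F$ in one step from the fact that a polytope is the disjoint union of the relative interiors of its faces (equivalently, your own carrier-face device applied to any point of $T\cap K$ would eliminate the induction and the separation step entirely). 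Second, your treatment of statement \eqref{item:random_sections_first_identity_3} via carrier faces and disjointness of relative interiors is genuinely different from, and simpler than, the paper's: the paper constructs the face $B$ by taking a supporting hyperplane $H_0\subset S$ of $A$ in $S$, extending it to a hyperplane $H\subset\R^n$ with $P\subset H^+$ via Lemma \ref{lem:separation_thm}, and then proving $H\cap S=H_0$ --- a step that uses the hypothesis $0\in\interior(P)$ --- whereas your argument ($x\in\relint A$, carrier face $B$, then $x\in\relint B\cap S\subset\relint(B\cap S)$ forces $B\cap S=A$ and $\dim B=j+l$) bypasses all of this; it also makes your statement \eqref{item:random_sections_first_identity_1} a short dimension count through statement \eqref{item:random_sections_first_identity_2} (no circularity, since \eqref{item:random_sections_first_identity_2} is proved independently), where the paper instead invokes the key lemma a second time to place a point of $\relint B_1$ on $S$. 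The standard facts you leave at assertion level (existence of a supporting hyperplane containing $T$, local coincidence of a polytope with its tangent cone, lineality space of $T_yK$ at $y\in\relint G$ equal to the direction space $W$ of $\aff G$) are all legitimate and are spelled out or cited in the paper's version.
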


\begin{proof}[Proof of Proposition~\ref{Prop:Random_sections_first_subsection} assuming Proposition~\ref{prop:random_sections_first_identity}]
In our setting of a random linear subspace $S=L$ and a deterministic polytope $P$, the subspace $L$ almost surely is in general position with respect to $\cF(P)$  by Proposition \ref{prop:general_position_almost_surely}. It follows that statements~\eqref{item:random_sections_first_identity_1} to \eqref{item:random_sections_first_identity_3} of Proposition~\ref{prop:random_sections_first_identity} hold almost surely.
				
Statement \eqref{item:random_sections_first_identity_3} gives a map that sends $A \in \cF_j(L\cap P)$ to the face $B\in \cF_{j+l}(P)$ satisfying~\eqref{eq:in_item:random_sections_first_identity_3}. By statement~\eqref{item:random_sections_first_identity_1} this $B$ is unique and thus the map is well-defined.
Since by~\eqref{eq:in_item:random_sections_first_identity_3} the map is injective, we have
$$
\phip(j,n-l,n) = \#\cF_j(L\cap P)  \leq \# \{ B \in \cF_{j+l}(P) : B\cap L \neq \emptyset \} = \sum_{B\in \cF_{j+l}(P)} \indi{B \cap L \neq \emptyset} .
$$
By statement \eqref{item:random_sections_first_identity_2} the image of the map
$B \mapsto B\cap L$ from $\{B \in \cF_{j+l}(P) : B\cap L \neq \emptyset \}$ to $\cF_j(L\cap P)$ is a subset of $\cF_j(L\cap P)$, and by statement \eqref{item:random_sections_first_identity_1} this map in injective. Thus
$$
\phip(j,n-l,n)  = \#\cF_j(L\cap P) \geq \# \{ B \in \cF_{j+l}(P) : B\cap L \neq \emptyset \} = \sum_{B\in \cF_{j+l}(P)} \indi{B \cap L \neq \emptyset}.
$$
So, Proposition~\ref{Prop:Random_sections_first_subsection} is a  consequence of Proposition \ref{prop:random_sections_first_identity}.
\end{proof}			
				

The following lemma is an important step in the proof of Proposition \ref{prop:random_sections_first_identity}. Its proof is inspired by and similar to the proof of \cite[Lemma 3.5]{kabluchko_convex_hulls_hyperplane_arragenments_weyl_chambers}.
	\begin{lemma} \label{lem:random_sections_S_cap_intQ_not_empty}
		Let $Q\subset \R^n$ be a polytope (or, more generally, an intersection of finitely many half-spaces which is allowed to be unbounded) of full dimension $\dim Q=n$.
		Let the linear subspace $S\subset \R^n$ of codimension $l\in \{1,\ldots,n-1\}$ be in general position with respect to the set of the affine hulls of its faces $\{ \aff (F): F\in \cF(Q) \}$.
		If $S$ intersects $Q$, then it also intersects its interior $\interior (Q)$, i.e.
		$$
		S\cap Q \neq \emptyset \Rightarrow S\cap \interior(Q)\neq \emptyset .
		$$
	\end{lemma}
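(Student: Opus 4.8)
The plan is to reduce the statement to the production of a single feasible direction lying inside $S$. Take any $x\in S\cap Q$ and let $F$ be the unique face of $Q$ with $x\in\relint(F)$. If $F=Q$ then $x\in\interior(Q)$ and there is nothing to prove, so assume $F$ is a proper face. Writing $Q=\{y\in\R^n:\langle a_i,y\rangle\le b_i,\ i=1,\ldots,m\}$ and letting $J:=\{i:\langle a_i,x\rangle=b_i\}$ be the set of tight constraints at $x$, I would first observe that the open cone $O:=\{v\in\R^n:\langle a_i,v\rangle<0 \text{ for all } i\in J\}$ is nonempty (any $p\in\interior(Q)$, which exists since $\dim Q=n$, satisfies $\langle a_i,p-x\rangle<0$ for $i\in J$, so $p-x\in O$) and that $x+\varepsilon v\in\interior(Q)$ for every $v\in O$ and all sufficiently small $\varepsilon>0$. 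Since $x\in S$ and $S$ is linear, it therefore suffices to exhibit a vector $v\in S\cap O$.

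Next I would bring in the face geometry. The affine hull $\aff(F)=\bigcap_{i\in J}\{y:\langle a_i,y\rangle=b_i\}$ has direction space $L_F:=\aff(F)-x=\{v:\langle a_i,v\rangle=0,\ i\in J\}$ of dimension $\dim F$, and each $a_i$ with $i\in J$ lies in $L_F^\perp$. Consequently, letting $\pi$ denote the orthogonal projection of $\R^n$ onto $L_F^\perp$, the cone $O$ is exactly the $\pi$-preimage of the nonempty open set $O':=\{w\in L_F^\perp:\langle a_i,w\rangle<0,\ i\in J\}\subseteq L_F^\perp$. Thus $S\cap O\neq\emptyset$ is equivalent to $\pi(S)\cap O'\neq\emptyset$, and this will follow at once once I show that $\pi$ maps $S$ onto all of $L_F^\perp$.

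The crux, and the only place where general position is used, is precisely this surjectivity. Since $x\in S\cap\aff(F)$ the intersection is nonempty, so general position forces $\dim\aff(F)\ge l$ and $\dim(S\cap\aff(F))=\dim F-l$; as $S\cap\aff(F)=x+(S\cap L_F)$, this gives $\dim(S\cap L_F)=\dim F-l$. Because $\ker\pi=L_F$, the rank--nullity theorem applied to $\pi|_S$ yields $\dim\pi(S)=\dim S-\dim(S\cap L_F)=(n-l)-(\dim F-l)=n-\dim F=\dim L_F^\perp$, whence $\pi(S)=L_F^\perp\supseteq O'$. Choosing $v\in S$ with $\pi(v)\in O'$ then gives $v\in S\cap O$, and $x+\varepsilon v\in S\cap\interior(Q)$ for small $\varepsilon>0$, completing the argument. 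I expect the dimension count to be the main obstacle: without general position the subspace $S\cap L_F$ could be strictly larger than $\dim F-l$, in which case $\pi(S)$ would be a proper subspace of $L_F^\perp$ that may avoid the open cone $O'$ entirely, and the conclusion would indeed fail, as already illustrated in the excerpt by two parallel lines in $\R^3$ one of which passes through the origin.
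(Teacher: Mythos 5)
Your proposal is correct. It shares the paper's overall skeleton --- pass to the (minimal) face $F$ with $\relint(F)\cap S\neq\emptyset$, invoke general position to get $\dim(S\cap \lin\aff(F))=\dim F-l$, and conclude the transversality $S+\lin\aff(F)=\R^n$ (your statement $\pi(S)=L_F^\perp$ is exactly this fact, read through the orthogonal projection with kernel $L_F$) --- but you execute the decisive final step quite differently. The paper, having translated $\relint(F)\cap S$ to the origin, builds an orthogonal decomposition $\R^n=V_0+V_1+V_2$ with $V_0=\lin(F)\cap S$, takes $z\in\interior(T_F(Q))$ in the tangent cone, and runs a somewhat delicate ball-covering argument (projecting a ball around $z$ onto $V_2$, upgrading $B_{r'}(v_2)\cap V_2\subset T_F(Q)$ to a full ball via the lineality space, then scaling into $Q$) to exhibit $\eps v_2\in S\cap\interior(Q)$. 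You instead work directly with an $H$-representation: the open cone $O$ of strictly feasible directions at $x$ is the $\pi$-preimage of a nonempty open cone $O'\subset L_F^\perp$, so surjectivity of $\pi|_S$ (rank--nullity plus the general-position count) immediately yields $v\in S\cap O$ and hence $x+\eps v\in S\cap\interior(Q)$. Your route is shorter and more elementary, replacing the tangent-cone/ball machinery by finitely many strict inequalities; its only extra debt is the standard polyhedral fact that for $x$ with tight set $J$ the minimal face $F$ containing $x$ satisfies $\aff(F)=\bigcap_{i\in J}\{y:\langle a_i,y\rangle=b_i\}$ (so $L_F^\perp=\lin\{a_i:i\in J\}$), which you should cite or prove; the paper's argument, in exchange, is coordinate-free and never presupposes an inequality description or this characterization of faces. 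Both proofs use general position at the same single spot, and your closing remark about where the dimension count fails without it (the two parallel lines in $\R^3$) matches the paper's discussion.
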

\begin{proof}
Assume that $S\cap Q \neq \emptyset$ but $S\cap \interior(Q) = \emptyset$. It is known that the polytope $Q$ is the disjoint union of the relative interiors of all its faces. Because of $S\cap Q \neq \emptyset$ there is a face $F$ of $Q$ with $\relint(F)\cap S \neq \emptyset$ and by $S\cap \interior(Q) = \emptyset$ we have $F \neq Q$.
							
Without loss of generality let the origin $0\in \relint(F)\cap S$. Then the affine hull of $F$ is its linear hull $\aff(F)=\lin(F)$.
By assumption of the lemma, $S$ is in general position with respect to $\lin(F)$. Thus $S\cap \lin(F)$ is the empty set if $\dim(\lin (F))<l$ or it is a linear subspace of dimension $\dim(F)-l$ else.
Since by assumption $\relint(F)\cap S \neq \emptyset$, the first case is impossible, thus we must have $\dim(F)\geq l$ and $\dim(\lin(F)\cap S)=\dim(F)-l$.
Since $S$ and $\lin(F)$ are linear subspaces, this even implies
	$$
		\dim(S + \lin(F))= \dim(S)+\dim(\lin(F))-\dim(S\cap \lin(F))=n-l+\dim(F)-(\dim(F)-l)=n
	$$
	and thus $S+\lin(F)=\R^n$. Denoting $V_0=\lin(F)\cap S$ this implies the existence of two linear  subspaces $V_1, V_2\subset \R^n$ satisfying $V_0 \bot V_1$, $V_0 \bot V_2$, $\lin(F)=V_0+V_1$, $S=V_0+V_2$ and $V_0+V_1+V_2=\R^n$.
We will show that this implies a contradiction.
							
	Let $T_F(Q)=\{y\in \R^n:\exists \eps>0 \text{ such that } \eps y \in Q\}$ be the tangent cone of $Q$ at $F$. Fix any $z\in \interior(T_F(Q))$. By our result above there is a decomposition $z=v_0+v_1+v_2$ with $v_i\in V_i$, $i=0,1,2$. We will prove that there is an $\eps >0$ such that $\eps v_2\in \interior(Q)$, which is a contradiction to $\eps v_2\in V_2 \subset S$ and $S\cap \interior(Q)=\emptyset$.
							
	Since $v_0 + v_1 \in \lin(F) \subset T_F(Q)$, we have $-(v_0+v_1)\in \lin(F)\subset T_F(Q)$ and thus
	$$
		v_2=z-v_0-v_1 \in T_F(Q).
	$$
	Note that $v_2$ is the projection of $z$ onto $V_2$ along $V_0+V_1$.
	The above argument showing that $v_2\in T_F(Q)$ applies to every point in a sufficiently small ball around $z$. The projection of this ball onto $V_2$ along $V_0+V_1$ covers some set $B_{r'}(v_2)\cap V_2$, where $B_{r'}(v_2)$ is a ball of radius $r'>0$ around $v_2$. Thus $B_{r'}(v_2)\cap V_2 \subset T_F(Q)$. Since $V_0+V_1=\lin(F)\subset T_F(Q)$, by the convex cone property of $T_F(Q)$ there  is an $r\in(0,r']$ such that $B_r(v_2) \subset T_F(Q)$.
	
	Again by the convex cone property of $T_F(Q)$, for every $\eps>0$ we have $B_{\eps r}(\eps v_2) \subset T_F(Q)$. By the definition of the tangent cone $T_F(Q)$, for sufficiently small $\eps$ we have $B_{\eps r}(\eps v_2) \subset Q$ and thus $\eps v_2 \in \interior(Q)$, which is a contradiction as explained above.
\end{proof}
In the proof of statement \eqref{item:random_sections_first_identity_3} in Proposition \ref{prop:random_sections_first_identity} we will need the following corollary from the Hyperplane Separation Theorem. To state it, we need the following definition.
Let $H\subset \R^n$ be an affine hyperplane given by the equation $H=\{x\in \R^n : \langle x,v \rangle=r \}$, $v\in \R^n\backslash\{0\}$, $r\in\R$. Then we define the two half spaces that $H$ divides $\R^n$ into by $H^+:=\{x\in \R^n : \langle x,v \rangle\geq r \}$, $H^-:=\{x\in \R^n : \langle x,v \rangle \leq  r \}$. Note that these spaces swap positions, if $v$ is replaced by $-v$ and $r$ by $-r$. Thus these spaces are not well-defined, if only $H$, but not the exact form of its defining equation is given. So when we speak of $H^+$ it can be any of the two half spaces, but it will always contain $H$.
\begin{lemma} \label{lem:separation_thm}
Let $Q \subset \R^n$ be a convex set with non-empty interior and let $H_0\subset \R^n$ be an affine subspace with $H_0 \cap Q \neq \emptyset$, but $H_0 \cap \interior(Q) = \emptyset$. Then there is an affine hyperplane $H\subset \R^n$ with $H_0 \subset H$ and $Q \subset H^+$.
\end{lemma}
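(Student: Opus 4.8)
The plan is to apply a separating hyperplane theorem to the open convex set $\interior(Q)$ and the affine subspace $H_0$, and then to promote the resulting weakly separating hyperplane to one that actually \emph{contains} all of $H_0$. The two sets $\interior(Q)$ and $H_0$ are nonempty, convex and, by hypothesis, disjoint, and $\interior(Q)$ is open. The standard separation theorem for a disjoint pair of convex sets one of which is open then yields a vector $v\in\R^n\setminus\{0\}$ and a constant $c\in\R$ with $\langle y,v\rangle < c$ for every $y\in\interior(Q)$ and $\langle x,v\rangle \ge c$ for every $x\in H_0$ (the inequality on the open side is automatically strict).

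The key step is to observe that the restriction of the affine functional $x\mapsto\langle x,v\rangle$ to the affine subspace $H_0$ is bounded below by $c$. Since the image of an affine subspace under an affine functional is either a single point or all of $\R$, this boundedness forces the functional to be constant on $H_0$, say $\langle x,v\rangle = c_0$ for all $x\in H_0$, where necessarily $c_0\ge c$. Setting $H:=\{x\in\R^n:\langle x,v\rangle=c_0\}$ we obtain $H_0\subset H$ immediately, which is precisely the containment the lemma requires.

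It then remains to verify that $Q$ lies in one of the two closed halfspaces bounded by $H$. From $\langle y,v\rangle < c \le c_0$ on $\interior(Q)$ and continuity of the functional we get $\langle y,v\rangle\le c_0$ on the closure $\overline{\interior(Q)}$; since $Q$ is convex with nonempty interior we have $Q\subseteq\overline{\interior(Q)}$, so $\langle y,v\rangle\le c_0$ for all $y\in Q$. Taking $H^+:=\{x:\langle x,v\rangle\le c_0\}$ (the orientation of $H^+$ being at our disposal, as noted before the statement) gives $Q\subset H^+$. I expect the middle step to be the main obstacle: the off-the-shelf separation theorem only produces a hyperplane weakly separating the two sets, and the real content is the constancy-on-an-affine-subspace argument that pins this hyperplane down so that it passes through the whole of $H_0$ rather than merely separating $H_0$ from $\interior(Q)$.
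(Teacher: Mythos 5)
Your proof is correct, and its backbone coincides with the paper's: both apply the hyperplane separation theorem to the disjoint convex sets $\interior(Q)$ and $H_0$ and then exploit the fact that an affine subspace contained in a closed halfspace must be parallel to the bounding hyperplane. The difference lies in how the separating hyperplane is promoted to one containing $H_0$. The paper keeps the separating hyperplane $H$ itself and uses the hypothesis $H_0\cap Q\neq\emptyset$: a point $x\in H_0\cap Q$ lies in $H^-$ (through $H_0$) and in $H^+$ (through $Q\subset\overline{\interior(Q)}\subset H^+$), hence $x\in H$, and the parallel subspace $H_0$ passing through $x\in H$ is then contained in $H$. You instead note that the linear functional $\langle\cdot,v\rangle$ is constant, say equal to $c_0\ge c$, on $H_0$ (the same parallelism fact, phrased via the image of an affine subspace under a linear functional being a point or all of $\R$) and translate the hyperplane to the level set $\{\langle\cdot,v\rangle=c_0\}$; one-sidedness of $Q$ survives the translation because $Q\subset\{\langle\cdot,v\rangle\le c\}\subset\{\langle\cdot,v\rangle\le c_0\}$, where the first inclusion uses the standard closure fact $Q\subseteq\overline{\interior(Q)}$ that the paper also invokes. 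A small dividend of your variant: you never use $H_0\cap Q\neq\emptyset$ except to guarantee $H_0\neq\emptyset$, so your argument actually proves the lemma under the weaker hypothesis that $H_0$ is a nonempty affine subspace disjoint from $\interior(Q)$, whereas the paper's argument genuinely needs the intersection point to pin its hyperplane onto $H_0$. (The hypothesis is of course available in the paper's application, so nothing is gained there, but your version is marginally more general and avoids the paper's slightly loose phrase calling $H_0$ a ``linear subspace.'')
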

\begin{proof}
In the situation of the Lemma $\interior(Q)$ and $H_0$ are two disjoint convex sets. By the Hyperplane Separation Theorem, see Theorem~1.3.7 in~\cite{schneider_convex_bodies}, there is a hyperplane $H$ such that $H_0 \subset H^-$ and $\interior(Q)\subset H^+$. Since $H_0$ is a linear subspace of $\R^n$ which is contained in the half space $H^-$ it must be parallel to $H$.
By $H_0\cap Q \neq \emptyset$
there is a point $x\in Q\cap H_0$ and by the construction of $H$ we have $x\in H$. Thus $H$ and $H_0$ both contain the point $x$ and hence we have $H_0\subset H$.
Recalling that $\interior(Q)\subset \interior(H^+)$ and taking the closure we conclude that $Q\subset H^+$.
\end{proof}
Now we can prove Proposition~\ref{prop:random_sections_first_identity}.

\begin{proof}[Proof of Proposition \ref{prop:random_sections_first_identity}]
	We first prove statement \ref{item:random_sections_first_identity_1}.
	
	Let $B_1, B_2, S$ be defined as in the statement. We can think of $B_1$ as a polytope in its affine hull $\aff(B_1)$. In this setting $B_1\subset \aff (B_1)$ is a polytope of full dimension and $S\cap \aff(B_1)$ is an affine linear subspace of $\aff(B_1)$ which is in general position with respect to $\{\aff(F): F\in \cF(B_1)\}$.
	
	Lemma \ref{lem:random_sections_S_cap_intQ_not_empty} gives that
	\begin{align}
		(\relint B_1)\cap S
		=
		(\relint B_1)\cap (S\cap \aff(B_1))
		\neq
		\emptyset. \label{eq:S_cap_relint_B1_notempty}
	\end{align}
	Since the dimensions of $B_1$ and $B_2$ are equal, $\relint (B_1)$ and $B_2$ are disjoint.
	By \eqref{eq:S_cap_relint_B1_notempty} there is an $x\in (\relint B_1)\cap S \subset B_1\cap S$ and since $x\in \relint(B_1)$, we have $x\notin B_2 \supset B_2 \cap S$. Thus we have $B_1 \cap S \neq B_2 \cap S$.
	
	To prove statement \eqref{item:random_sections_first_identity_2} fix $B \in \cF_{j+l}(P)$. As a first step we prove that $S\cap B$ is a face of $S\cap P$.
	Since $B$ is a face of $P$, there is an affine hyperplane $H\subset \R^n$ such that $H\cap P=B$ and $P\subset H^+$. Thus we have
	\begin{align*}
		B\cap S &= (H\cap P)\cap S =  H\cap (P\cap S) \;\;\text { and} \\
		P\cap S &\subset P \subset H^+,
	\end{align*}
	which implies by definition that $S\cap B$ is a face of $S\cap P$.
	
	To complete the proof of statement \eqref{item:random_sections_first_identity_2} we show that the dimension of $S\cap B$ is $j$.
	First note that since $S$ is in general position with respect to $\aff(B)$, we have $\dim(\aff(B) \cap S)=j$ and thus $\dim(B\cap S) \leq j$.
	
	By the same argument as in the beginning of the proof of statement \eqref{item:random_sections_first_identity_1} we have $\relint(B)\cap S \neq \emptyset$.
	Thus there is an $x\in \relint(B) \cap S$ and since $\relint (B)$ is a relatively open subset of $\aff(B)$, there is an $\eps>0$ such that $B_\eps(x) \cap \aff(B) \subset \relint(B)$. Here $B_\eps(x)$ denotes the $n$-dimensional ball with radius $\eps>0$ and centre $x$.
	Thus we have
	$$
		B_\eps(x) \cap \aff(B)\cap S \subset \relint(B)\cap S \subset B\cap S.
	$$
	Since $S$ is in general position with respect to $\aff(B)$, the intersection $S\cap \aff(B)$ has dimension $j$ and since $B_\eps(x)$ is an $n$-dimensional ball centred at $x\in S\cap \aff(B)$, intersecting with it does not change the dimension. Thus we have
	$$
		j=\dim(B_\eps(x) \cap \aff(B)\cap S)\leq  \dim(B\cap S).
	$$

	To prove statement \eqref{item:random_sections_first_identity_3}, fix $A \in \cF_j(S\cap P)$.
	We first show that there is a face $B$ of $P$ such that $A=B\cap S$.
	To do this we will construct an affine hyperplane $H\subset \R^n$ such that $H\cap S \cap P = A$ and $P\subset H^+$. In this setting $B:= H\cap P$ is a face of $P$ by definition and $B$ suffices $B\cap S= A$.
	
	Since $A$ is a face of $S\cap P$, there is a hyperplane $H_0 \subset S$ such that $H_0 \cap (S\cap P)= A$ and $S\cap P \subset H_0^+$. Note that $H_0$ is a hyperplane in $S$, but not in $\R^n$.
	
	By Lemma \ref{lem:separation_thm} there is a hyperplane $H \subset \R^n$ such that $H_0 \subset H$ and $P \subset H^+$.
	
	If $H\cap S=H_0$,  the hyperplane $H$ obviously suffices
	$$
	H\cap P \cap S = (H \cap S) \cap (S\cap P) = H_0 \cap (S\cap P)=A.
	$$
	To prove the identity $H\cap S=H_0$, first note that $H_0 = H_0 \cap S \subset H\cap S$. To prove the equality, we assume by contraposition that there exists an $x \in (H\cap S)\setminus H_0$.
	Then $H\cap S$ is an affine linear subspace of $S$ that contains the hyperplane $H_0$ and the point $x \notin H_0$. Thus  $H\cap S = S$ and, since $S$ is a linear subspace, $0 \in S =S\cap H \subset H$. On the other hand, the point $0$ belongs to the interior of $P$ by the assumption of the proposition. This is a contradiction to $P \subset H^+$.
	
	
It remains to show that the face $B:=H\cap P$ has the right dimension $j+l$.
Recall that  $A\in \cF_j(S\cap P)$. By the first part of the proof of statement \eqref{item:random_sections_first_identity_3} there is $k\geq j$ and a face $B\in\cF_k(P)$ such that $B\cap S=A$.
	
We first assume that the dimension $k$ is smaller than the codimension $l$ of $S$. Since $S$ is in general position with respect to $\aff(B)$, the intersection $S\cap \aff(B)$ is the empty set, which is a contradiction to $A=B\cap S \subset S\cap \aff(B)$. Thus we have $k\geq l$.
	
In this case we have $B \in \cF_{l+(k-l)}(P)$ with $k-l\geq 0$. Hence statement \eqref{item:random_sections_first_identity_2} immediately gives
$$
k-l=\dim(B\cap S)=\dim(A)=j
$$
and thus $k=j+l$.
This completes the proof of statement \eqref{item:random_sections_first_identity_3} and thus of the proposition.																
\end{proof}

\subsection{Probabilities that fixed faces get intersected - Proofs}
\begin{proof}[Proof of Proposition \ref{prop:Prob_S_cap_F_notempty_Cube}]
	The subspace $L$ intersects the face $B$ if and only if it intersects the cone $C$ spanned by $B$ not only at the origin.
	Hence, by the conic Crofton formula (see Theorem \ref{thm:conic_crofton}) we have
	\begin{align}
	\P(L\cap B \neq \emptyset )
	=
	\P(L \cap C \neq \{0\})
	=
	2(\upsilon_{l+1}(C)+\upsilon_{l+3}(C)+\ldots).\label{eq:random_sections_cones_first_step}
	\end{align}
	Since $F$ is a face of $[-1,1]^n$, it is a cube itself and $C$ is a convex cone isometric to $\Ccu_d(\sigma^2)$, where
	the parameter $\sigma$ is the distance $\dist(\{0\}, B)$ between the origin and the cube $B$. It is easy to see that this distance is $\sqrt{n-d}$ and hence $C$ is isometric to $\Ccu_d(n-d)$, which completes the proof.
\end{proof}

\begin{proof}[Proof of Proposition \ref{prop:Prob_S_cap_F_notempty_crosspoly}]
As in the proof of Proposition \ref{prop:Prob_S_cap_F_notempty_Cube} let $C$ be the convex cone that is spanned by $B$. As before $L$ and $B$ are disjoint if and only if the intersection of $L$ and $C$ is only the origin. It follows that  \eqref{eq:random_sections_cones_first_step} holds.
	
	By the symmetry of the problem it is obvious that the probability $\P(L\cap B \neq \emptyset )$ depends on the face $B$ only by its dimension. Hence, without loss of generality we can assume that $B$ has the form $B=\conv \{\eee_1,\ldots, \eee_{d+1}\}$, which by definition is a regular simplex.
In this case  $C$ is isometric to $\Cs_{d+1}(0)$ and thus by Theorem~\ref{thm:conic_crofton}
	\begin{align*}
	\P(L\cap B \neq \emptyset )
	=
	2(\upsilon_{l+1}(C)+\upsilon_{l+3}(C)+\ldots)
	=
	2 \left(
	\upsilon_{l+1}(\Cs_{d+1}(0))+\upsilon_{l+3}(\Cs_{d+1}(0))+\ldots
	\right) .
	\end{align*}
In~\cite[Propositions 1.3 and 1.4(d)]{kabluchko2017absorption}, it was shown that for $k\in\{0,\ldots,d+1\}$ the $k$-th intrinsic volume of $\Cs_{d+1}(0)$ is
	$$
	\upsilon_{k}(\Cs_{d+1}(0))= \binom{d+1}{k} 2^{-(d+1)},
	$$
	which gives
	\begin{align*}
	\P(L\cap B\neq \emptyset)
	=
	2^{-d} \left(
	\binom{d+1}{l+1} + \binom{d+1}{l+3} + \ldots
	\right)
	=
	2^{-d} \left(
	\binom{d}{l} + \binom{d}{l+1} + \ldots
	\right).
	\end{align*}
	
\end{proof}

\begin{proof}[Proof of Proposition \ref{prop:Prob_S_cap_F_notempty_simplex}]
As is in the proofs of Propositions~\ref{prop:Prob_S_cap_F_notempty_Cube} and~\ref{prop:Prob_S_cap_F_notempty_crosspoly}, $L$ intersects the face $B$ if and only if it intersects the cone $C$ spanned by $B$ not only in the origin.
	Hence by the conic Crofton formula stated in Theorem \ref{thm:conic_crofton} we have \eqref{eq:random_sections_cones_first_step}.
	
Up to a factor, $P_n$ is isometric to the $n$-dimensional standard simplex
	$$S_{n} = \conv \{ \eee_1, \ldots, \eee_{n+1} \} \subset \R^{n+1}.$$
	Thus, denoting the centre of $S_{n}$ by $m=\frac{\eee_1+\ldots+\eee_{n+1}}{n+1}$, the cone $C$ is isometric to the cone $D$ spanned by the vectors $\eee_1-m,\ldots, \eee_{d+1}-m$.
	To determine the isometry type of $D$, and thus $C$, we just calculate the scalar products of the vectors spanning it. These are
	\begin{align*}
	\langle
	\eee_i -m, \eee_j -m
	\rangle
	=
	-\frac{1}{n+1} + \delta_{i, j} ,
	\end{align*}
	where we recall that $\delta_{i,j}=\indi{i=j}$ is the Kronecker delta.
	It follows that
	$
	D \cong \Cs_{d+1} \left( - \frac{1}{n+1} \right)
	$
	and thus
	$$
	C \cong \Cs_{d+1} \left( -\frac{1}{n+1} \right) .
	$$
	This completes the proof of the proposition.
\end{proof}

\subsection{Asymptotics}

\begin{proof}[Proof of Corollary \ref{cor:asymp_phis}]
	We start by analysing the term $\gKZ_{n-i+s+1}\left( - \frac{1}{n+l-s+1} \right)$, where
	$$
		\gKZ_k(r):= \P[\eta_1<0,\ldots,\eta_k<0],
	$$
	and $(\eta_1,\ldots,\eta_k)$ is a zero-mean Gaussian vector with
	$$
		\Cov(\eta_i,\eta_j)= r+\delta_{i,j}.
	$$
Note that $\gKZ_k(r)$ has the same meaning as in~\cite{kabluchko2017absorption} and $g_k(-r/(1+kr))= \gs_k(r)$.  	Recalling that by \cite[Proposition 1.2(b)]{kabluchko2017absorption} the internal solid angle $\beta(F,P_n)$ of an $(n-1)$-dimensional regular simplex $P_n$ at a $(k-1)$-dimensional face $F$ equals
	$$
		\beta(F,P_n)=\gKZ_{n-k}\left(-\frac{1}{n}\right),
	$$
	we can express our term by
	$$
		\gKZ_{n-i+s+1}\left( - \frac{1}{n-l+s+1} \right)
		=
		\gKZ_{n-i+s+1}\left(-\frac{1}{(n-i+s+1)+(i-l)}\right)
		=
		\beta (F,T),
	$$
	where $T:=\conv\{\eee_1,\ldots,\eee_{n-l+s+1}\}$ is an $(n-l+s)$-dimensional simplex and $F\in \cF_{i-l-1}(T)$ is one of its $(i-l-1)$-faces.
	In other words, we need the asymptotic behaviour of the internal solid angle of a simplex with increasing dimension at a face of fixed dimension. Such a formula has been derived in \cite[Corollary 2.1]{boroczky}. It states that for $n\to \infty$
	\begin{align}
		\beta(F,T)
		&=
		\frac{(i-l)^{\frac{n+s-i}{2}} \exp(\frac{n+s+2l-3i+1}{2})}{2^{\frac{n+s-i+2}{2}} \pi^{\frac{n+s-i+1}{2}} (n-l+s+1)^{\frac{n+s-i}{2}} } \left( 1+O\left( \frac{(i-l-1)^2+1}{n-l+s+1} \right) \right) \notag
		\\
		&\sim
		(2 \sqrt{\pi})^{-1} \eee^\frac{3l-3i}{2} \left(\frac{2\pi}{i-l}\right)^\frac{i-s}{2} \cdot n^{-\frac{n}{2}} n^\frac{i-s}{2} \left(\frac{(i-l)\eee}{2\pi}\right)^\frac{n}{2}. \label{eq:gs_asmyp}
	\end{align}
	In the last step the only non-trivial formula we used is
	$$
		(n-l+s+1)^{\frac{n+s-i}{2}}= (n-l+s+1)^{\frac{n}{2}} \cdot (n-l+s+1)^{\frac{s-i}{2}} \sim \eee^{\frac{s-l+1}{2}} n^\frac{n}{2} \cdot n^{\frac{s-i}{2}}.
	$$
	Having \eqref{eq:gs_asmyp} we can prove the corollary's actual statement.
	
	Recalling that by Theorem \ref{thm:random_sections_polytopes}
	$$
		\E\phis(n-i,n-l,n)=
		2\binom{n+1}{n-i+l+1}
		\sum_{s=1,3,5,\ldots} \upsilon_{l+s} \left(\Cs_{n-i+l+1} \left( -\frac{1}{n+1} \right)\right)
		$$
	and that the sum over all odd intrinsic volumes as well as the sum over all even intrinsic volumes equals $1/2$, we have
	\begin{align}
		\binom{n+1}{n-i+l+1} - \phis(n-i,n-l,n)= 2\binom{n+1}{n-i+l+1}
		\sum_{s=1,3,5,\ldots} \upsilon_{l-s} \left(\Cs_{n-i+l+1} \left( -\frac{1}{n+1} \right)\right). \label{eq:phis_asymp_def}
	\end{align}
	With the formula $\upsilon_k(\Cs_n(r))= \binom{n}{k} \gKZ_k \left(-\frac{r}{1+kr}\right)  \gKZ_{n-k} \left(\frac{r}{1+kr}\right)$ from \cite[Proposition 1.3]{kabluchko2017absorption}
	we can express the summands by
	\begin{align*}
		&2 \binom{n+1}{n-i+l+1} \upsilon_{l-s} \left(\Cs_{n-i+l+1} \left( -\frac{1}{n+1} \right)\right) \\
		&=
		2 \binom{n+1}{n-i+l+1} \binom{n-i+l+1}{l-s} \gKZ_{l-s} \left(\frac{1}{n+1-l+s} \right)	\gKZ_{n-i+s+1} \left(- \frac{1}{n+1-l+s} \right).
	\end{align*}
With $\gKZ_{l-s} \left( \frac{1}{n+1-l+s} \right) \ton \gKZ_{l-s}(0)=2^{s-l}$	by~\cite[Proposition 1.4(d)]{kabluchko2017absorption} and using the asymptotics \eqref{eq:gs_asmyp},
	$$
		\binom{n+1}{n-i+l+1} \sim \frac{n^{i-l}}{(i-l)!}
		\qquad \text{ and } \qquad
		\binom{n-i+l+1}{l-s} \sim \frac{n^{l-s}}{(l-s)!}
	$$
	this is asymptotically equivalent to
	$$
		\frac{2^{s-l}\eee^\frac{3l-3i}{2}}{(i-l)!(l-s)!\sqrt{\pi}}
		\left(\frac{2\pi}{i-l}\right)^\frac{i-s}{2} \cdot n^{-\frac{n}{2}} n^\frac{3i-3s}{2} \left(\frac{(i-l)\eee}{2\pi}\right)^\frac{n}{2}.
	$$
 In view of~\eqref{eq:phis_asymp_def} we are actually interested in the sum of these formulas over $s=1,3,5,\ldots$ and $s\leq l$. Note that the summand with $s=1$ is of a larger order than every other summand. Since the number of summands is finite and does not depend on $n$, the sum is dominated by its largest summand.
	Thus
	\begin{align*}
		\binom{n+1}{n-i+l+1} - \phis(n-i,n-l,n)
		&\sim
		2\binom{n+1}{n-i+l+1}
		\upsilon_{l-1} \left(\Cs_{n-i+l+1} \left( -\frac{1}{n+1} \right)\right)\\
		&\sim
		\frac{\pi^{\frac{i-2}{2}} 2^\frac{i-2l+1}{2} \eee^{\frac{3l-3i}{2}}}{(l-1)!(i-l)! (i-l)^\frac{i-1}{2}}
		\cdot
		n^{-\frac{n}{2}} n^{\frac{3i-3}{2}} \left( \frac{(i-l) \eee}{2\pi} \right)^{\frac{n}{2}}.
	\end{align*}
\end{proof}

\begin{proof}[Proof of Corollary \ref{cor:asymp_phicu_j_n-l_n}]
Recall from Theorem~\ref{thm:random_sections_polytopes} that
$$
\phicu(j,n-l,n) = 2^{n-l-j+1} \binom{n}{l+j} \sum_{s=1,3,5,\ldots} \upsilon_{l+s}\left( \Ccu_{l+j}(n-l-j) \right).
$$
The intrinsic volumes on the right-hand side are given by Theorem~\ref{thm:intrinsic_volumes} as follows:
$$
\upsilon_{l+s}\left( \Ccu_{l+j}(n-l-j) \right)
=
2^{j-s+1} \binom{l+j}{l+s-1} \gcu_{l+s-1}(n-l-s+1) \gs_{j-s+1}\left( \frac{1}{n-l-j} \right),
$$
where we assume that $s\leq j+1$ because otherwise the intrinsic volumes vanish.
	We start by analysing the asymptotic behavior of the term
	$$
		\gcu_{l+s-1}(n-l-s+1)=\P\left[ \xi_{l+s} \geq \sqrt{n-l-s+1} \max_{1\le i\le l+s-1} |\xi_{i}|\right],
	$$
	where $\xi_1,\xi_2,\ldots$ are i.i.d.\ standard normal distributed random variables. Expressed as an integral it has the form
	\begin{align*}
		\gcu_{l+s-1}(n-l-s+1)
		=
		\int_0^\infty \varphi(t) F^{l+s-1}\left(\frac{t}{\sqrt{n-l-s+1}}\right) \dd t, 
	\end{align*}
	where $\varphi$ is the density of the standard normal distribution and $F$ is the cumulative distribution function of $|\xi_1|$.
	
	Since as $n \to \infty$, while $t>0$ stays constant, we have
	\begin{align}
		F\left(\frac{t}{\sqrt{n-l-s+1}}\right)
		=
		\sqrt{\frac{2}{\pi}}\int_{0}^{\frac{t}{\sqrt{n-l-s+1}}} \eee^{-\frac{x^2}{2}} \dd x
		\sim
		\sqrt{\frac{2}{\pi}} \frac{t}{\sqrt{n-l-s+1}}
		\sim
		\sqrt{\frac{2}{\pi}} \frac{t}{\sqrt{n}},
	\end{align}
	the integrand satisfies
	$$
		\varphi(t) F^{l+s-1}\left(\frac{t}{\sqrt{n-l-s+1}}\right)
		\sim
		\frac{1}{n^\frac{l+s-1}{2}}      \varphi(t) \left(\sqrt{\frac{2}{\pi}} t \right)^{l+s-1}
	$$
for every fixed $t>0$.
	The natural hypothesis, i.e.\ that as $n\to \infty$
	\begin{align}
		\gcu_{l+s-1}(n-l-s+1)
		\sim
		\int_0^\infty\frac{1}{n^\frac{l+s-1}{2}} \varphi(t) \left(\sqrt{\frac{2}{\pi}} t \right)^{l+s-1} \dd t \label{eq:gcu_asymp_int}
	\end{align}
	follows from the dominated convergence theorem.
Expressing the $(l+s-1)$\textsuperscript{th} moment of $|\xi_1|$ as
	$$
		\sqrt{\frac{2}{\pi}} \int_{0}^{\infty} t^{l+s-1} e^{-\frac{t^2}{2}} \dd t
		=
		\frac{2^{\frac{l+s-1}{2}} \Gamma\left( \frac{l+s}{2} \right)}{\sqrt{\pi}},
	$$
	we can simplify \eqref{eq:gcu_asymp_int} to
\begin{align*}
\gcu_{l+s-1}(n-l-s+1)
\sim
\frac{1}{n^\frac{l+s-1}{2}} \left(\frac{2}{\sqrt{\pi}} \right)^{l+s-1}   \frac{\Gamma\left( \frac{l+s}{2} \right)}{2 \sqrt\pi}.
	\end{align*}
	
Using the continuity of $\gs_{j-s+1}$ and \cite[Proposition 1.4(d)]{kabluchko2017absorption} we have $\lim_{n\to\infty} \gs_{j-s+1}(\frac{1}{n-l-j}) = 2^{s-j-1}$ and thus the intrinsic volumes satisfy
\begin{align*}
		\upsilon_{l+s}\left( \Ccu_{l+j}(n-l-j) \right)
		&=
		2^{j-s+1} \binom{l+j}{l+s-1} \gcu_{l+s-1}(n-l-s+1) \gs_{j-s+1}\left( \frac{1}{n-l-j} \right) \\
		&\sim
		\binom{l+j}{l+s-1} \left(\frac{2}{\sqrt{\pi}} \right)^{l+s-1}   \frac{\Gamma\left( \frac{l+s}{2} \right)}{2\sqrt{\pi}} \cdot \frac{1}{n^\frac{l+s-1}{2}}
	\end{align*}
	as $n\to \infty$. Recalling that
$$
\phicu(j,n-l,n) = 2^{n-l-j+1} \binom{n}{l+j} \sum_{s=1,3,5,\ldots} \upsilon_{l+s}\left( \Ccu_{l+j}(n-l-j) \right)
$$
we conclude that the summand with $s=1$ is of a higher order than any other one.  Observe also that the number of non-zero summands is bounded by a term not depending on $n$. Thus, the whole sum is dominated by the first  summand, i.e.\
\begin{align*}
\phicu(j,n-l,n)
&\sim
2^{n-l-j+1} \binom{n}{l+j}
\upsilon_{l+1}\left( \Ccu_{l+j}(n-l-j) \right)\\
&\sim
2^{n-j} \binom{n}{l+j} \binom{l+j}{l} \left(\frac{1}{\sqrt{\pi}} \right)^{l+1}   \Gamma\left( \frac{l+1}{2} \right) \cdot \frac{1}{n^\frac{l}{2}}\\
&\sim
2^{n-j} \frac{n^{j+(l/2)}}{l!j!}  \frac{\Gamma\left( \frac{l+1}{2} \right)}{\pi^{(l+1)/2}}
\end{align*}
as $n\to\infty$.
\end{proof}

\subsection*{Acknowledgement}
We thank Jonathan Thalmann for pointing out a mistake in a former version of Proposition \ref{prop:inner_outer_angle} and the unknown referee for a careful reading of the manuscript.

\bibliography{absorption_probablity_bib}
\bibliographystyle{plainnat}

\end{document}